% \pdfoutput=1
% \documentclass{amsart}
% \usepackage[utf8]{inputenc}
% \usepackage{amsmath}
% \usepackage{amssymb}
% \usepackage{amsthm}
% \usepackage{tikz-cd}
% \usepackage{mathtools}
% \usepackage{hyperref}

% \usepackage{latexsym,amscd,   euscript, mathrsfs, hyperref}
% \usepackage[dvipsnames]{xcolor}
% \usepackage[all]{xy}
% \usepackage{tikz}
% \usetikzlibrary{arrows}
% \usetikzlibrary{cd}
% \usepackage{makecell}

% \usepackage[cal=boondoxo, scr=boondoxo]{mathalfa}

% ----------------------------------------------------------------
% AMS-LaTeX Paper ************************************************
% **** -----------------------------------------------------------
\documentclass[12pt]{amsart}
\usepackage{latexsym,amsmath, amscd, amssymb, amsthm,  euscript, mathrsfs, hyperref}
\usepackage[dvipsnames]{xcolor}
\usepackage[all]{xy}
\usepackage{tikz}
\usetikzlibrary{arrows}
\usetikzlibrary{cd}
\usepackage{makecell}

\usepackage[cal=boondoxo, scr=boondoxo]{mathalfa}

\usepackage[totalwidth=480pt, totalheight=680pt]{geometry}

%\usepackage{showlabels}
%\usepackage[active]{srcltx} % SRC Specials: DVI [Inverse] Search

% THEOREMS -------------------------------------------------------
\newtheorem{thm}[subsection]{Theorem}
\newtheorem{thm/def}[subsection]{Theorem/Definition}

\newtheorem*{lem*}{Lemma}

\newtheorem{prop}[subsection]{Proposition}
\newtheorem*{prop*}{Proposition}

% \newcounter{thm}
% \newtheorem{thm}[theorem]{thm}
 \newtheorem{corollary}[subsection]{Corollary}
\newtheorem{lemma}[subsection]{Lemma}

\theoremstyle{definition}

\newtheorem{defn}[subsection]{Definition}

\theoremstyle{definition}

\theoremstyle{definition}

\newtheorem{rem}[subsection]{Remark}
\newtheorem*{rem*}{Remark}

\newtheorem{example}[subsection]{Example}
\numberwithin{equation}{subsection}

\theoremstyle{definition}
\newtheorem{definition}[subsection]{Definition}
\newtheorem{remark}[subsection]{Remark}

\newcommand{\et}{\text{\rm \'et}}
\newcommand{\mls }{\mathcal}
\newcommand{\Sp }{\text{\rm Spec}}

\title{Gerbes for trigonalizable group schemes}
\author{Noah Olander and Martin Olsson}
\date{}

\AtBeginDocument{%
   \def\MR#1{}
}

\begin{document}
\maketitle

\begin{abstract}
    We prove that smooth, separated Deligne--Mumford stacks in mixed characteristic with quasi-projective coarse moduli space are global quotient stacks and satisfy the resolution property. This builds on work of Kresch and Vistoli and of Bragg, Hall, and Matthur which proves the case when the stack is over a base field, as well as work of Gabber and de Jong which proves the same holds for a $\mu_n$-gerbe over a scheme with an ample line bundle.   The key technical input is a result that gerbes banded by so-called trigonalizable group schemes admit faithful vector bundles and are quotient stacks.
\end{abstract}

\section{Introduction}

Many moduli spaces and other algebraic stacks that occur in applications have stronger properties than being merely algebraic; rather, they can be presented as quotients of a scheme by the action of an algebraic group (they are \emph{quotient stacks}).  This property of being a quotient stack has a number of important consequences and enables one to use methods of equivariant geometry (for example, geometric invariant theory).  It is therefore an important question in the theory of algebraic stacks to understand intrinsic conditions on a stack which imply that it is a quotient stack.   With some mild assumptions, the property of being a quotient stack is equivalent to the existence of a certain vector bundle on the stack, which is also closely related to the so-called resolution property \cite{Totaro}.

An important classical application of this circle of ideas and questions is to the problem of comparing  the Brauer group $\text{Br}(S)$ of a scheme $S$, defined in terms of Azumaya algebras \cite{GroupedeBrauerI}, and the cohomological Brauer group $\text{Br}'(S)$, defined as $H^2(S_\et , \mathbf{G}_m)_{\text{tors}}$ (torsion in \'etale cohomology).  The basic problem, which has been resolved in a number of cases but remains open in general, is to understand when the natural map $\text{Br}(S)\rightarrow \text{Br}'(S)$ is an isomorphism.  A fruitful approach to this question is to reformulate the problem in terms of gerbes and their geometric structure.  Namely, a class in $\text{Br}'(S)$ can be represented by a $\mathbf{G}_m$-gerbe $\mls X\rightarrow S$, and the class is in the image of $\text{Br}(S)$ precisely when this gerbe $\mls X$ admits a nowhere zero $1$-twisted vector bundle (a vector bundle on which the stabilizer group schemes act by the standard character).  If $S$ furthermore admits an ample line bundle then this is also equivalent to $\mls X$ admitting a faithful vector bundle, and this is, in turn, also equivalent to $\mls X$ having the resolution property.    Key work establishing this connection is due to \cite{Caldararuthesis} and \cite{Lieblichthesis}.

From this perspective it is natural to consider the general question of when a gerbe $\mls X\rightarrow S$ banded by a group scheme $G$ has the resolution property.  That is the question studied in this article.  For $G = \mathbf{G}_m$ the question is part of the story of the Brauer group as noted above and is due to Gabber and de Jong \cite{dejong-gabber}, and in the case of unipotent group schemes 
%over a field -- they do a notion of relatively unipotent group scheme also
the problem was studied by Bragg, Hall, and Mathur \cite{bragg2025unipotentmorphisms} whose work informed many of the arguments here.

These works, however, do not answer the question even for some basic group schemes.   For example, let $S$ be a scheme with an ample line bundle and $n > 0$ an integer. If $\mathcal{X} \to S$ is a gerbe for the group $\mathbf{Z}/n$, then $\mathcal{X}$ is known to have a faithful vector bundle when (a) $n \in \mathcal{O}(S)^\times$ by Gabber's result \cite{dejong-gabber}, and (b) when $S$ is a scheme over a field by an extension of Gabber's Theorem due to Bragg, Hall, and Mathur \cite{bragg2025unipotentmorphisms}. The results of this article settle, in particular, this case in general.

\subsection{Statements of main results}
%For any integer $N>0$ let $D_N, T_N, U_N \subset GL_N$ denote the subgroup schemes of diagonal matrices, upper triangular matrices, and upper triangular matrices with $1$'s on the diagonal, respectively. These fit into a short exact sequence
%\begin{equation}
%\label{equn-diagbyunip}
%1 \to U_N \to T_N \to D_N \to 1
%\end{equation}
%of group schemes, where $T_N \to D_N \cong \mathbf{G}_m^N$ takes a matrix to its diagonal entries.

\begin{definition}[{\cite[4.2.3.4]{DemazureGabriel}}] Let $S$ be a scheme.  A %finitely presented, flat, 
relatively affine group scheme $G/S$ is \emph{trigonalizable} if it admits a faithful $G$-equivariant vector bundle $\mls  E$ which has a $G$-stable filtration 
$$
0 = \mls  E^n \subset \mls  E^{n-1}\subset \cdots \subset \mls  E^0 = \mls  E
$$
with each successive quotient $\mls  E^i/\mls  E^{i+1}$  a line bundle.
We say that a relatively affine group scheme $G/S$ is \emph{locally trigonalizable} if there exists a finite locally free surjection $S'\rightarrow S$ such that $G\times _SS'$ is trigonalizable.
\end{definition}

Given a group scheme $G/S$ and a filtered $G$-equivariant vector bundle $\mls E^\bullet$ as in the definition we get an induced morphism of group schemes
\begin{equation}
\label{equn-maptotorus}
\Psi^{\mathcal{E}^\bullet} :G\rightarrow \prod _i\text{Aut}(\mls E^i/\mls E^{i+1})%=:T
\simeq \mathbf{G}_{m, S}^n.
\end{equation}

\begin{definition}
Given a gerbe $\mathcal{X} \to S$ banded by a locally trigonalizable group scheme $G$, we say $\mathcal{X}$ is \emph{torsion} if 
for some choice of finite locally free surjective morphism $S'\to S$ and faithful $G_{S'}$-equivariant vector bundle $\mathcal{E}$ on $S'$ with complete $G_{S'}$-stable flag $\mathcal{E}^\bullet$, the class of the pushforward gerbe $\Psi^{\mathcal{E}^{\bullet}}_*\mathcal{X}_{S'}$ in $H^2(S'_{\acute{e}t}, \mathbf{G}_{m, S'}^n)$ is torsion.
\end{definition}

\begin{remark}
    In general, the condition depends on the choice of $G_{S'}$-stable filtration $\mathcal{E}^\bullet$. However, if $G \to S$ is a finite locally free group scheme which is locally trigonalizable, then every gerbe $\mathcal{X} \to S$ banded by $G$ is torsion. Also, if $G \to S$ is a locally trigonalizable group scheme and the finite locally free cover $S' \to S$ in the definition of locally trigonalizable can be chosen with $S'$ regular, then any gerbe $\mathcal{X} \to S$ banded by $G$ is torsion; see \ref{subsection-remarks}.
\end{remark}

\begin{thm}
\label{thm-faithfulbundleexists}
    Let $S$ be a scheme with an ample line bundle. Let $G \to S$ be a finitely presented, flat locally trigonalizable group scheme.  If $\mathcal{X} \to S$ is a gerbe banded by $G$ which is torsion, then $\mathcal{X}$ admits a faithful vector bundle and is a quotient stack.  If furthermore $G \to S$ is finite locally free, then $\mathcal{X}$ has the resolution property.
\end{thm}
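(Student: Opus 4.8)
The plan is to reduce to the case $G = \mathbf{G}_m$ handled by Gabber--de Jong, combined with the unipotent techniques of Bragg--Hall--Mathur, using the homomorphism $\Psi = \Psi^{\mathcal{E}^\bullet}$ to separate a ``torus part'' from a ``unipotent part''.

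First I would reduce to the case where $G$ is trigonalizable. Choose a finite locally free surjection $S' \to S$ witnessing the torsion hypothesis, so that $G_{S'}$ carries a faithful equivariant bundle $\mathcal{E}$ with complete flag $\mathcal{E}^\bullet$ and $\Psi^{\mathcal{E}^\bullet}_* \mathcal{X}_{S'}$ is torsion. Since a finite morphism is affine, $S'$ again has an ample line bundle (the pullback of one on $S$), so it suffices to produce a faithful vector bundle on $\mathcal{X}_{S'}$ and descend it. Descent is immediate: if $f \colon \mathcal{X}_{S'} \to \mathcal{X}$ denotes the (finite locally free) base change and $\mathcal{V}$ is faithful on $\mathcal{X}_{S'}$, then the counit $f^* f_* \mathcal{V} \to \mathcal{V}$ is surjective and $G_{S'}$-equivariant, so $G_{S'} \to \mathrm{Aut}(f^* f_* \mathcal{V})$ is a monomorphism; as this is the base change of $G \to \mathrm{Aut}(f_* \mathcal{V})$ along the fppf cover $S' \to S$, the latter is a monomorphism and $f_* \mathcal{V}$ is faithful. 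Thus I may assume $G$ is trigonalizable over $S$, with $\Psi \colon G \to \mathbf{G}_{m,S}^n$ and $\Psi_* \mathcal{X}$ torsion.

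Next I would factor $\mathcal{X} \to S$ through $\Psi$. Writing $\mathcal{Y} = \Psi_* \mathcal{X}$ for the resulting $\mathbf{G}_m^n$-gerbe, the morphism $\mathcal{X} \to \mathcal{Y}$ is a gerbe banded by the kernel $U = \ker \Psi$, which is unipotent because it acts trivially on each graded piece $\mathcal{E}^i/\mathcal{E}^{i+1}$. For each $i$ let $\chi_i \colon \mathbf{G}_m^n \to \mathbf{G}_m$ be the $i$-th projection and $\psi_i = \chi_i \circ \Psi$; then $\mathcal{Z}_i := \psi_{i,*}\mathcal{X} = \chi_{i,*}\mathcal{Y}$ is a $\mathbf{G}_m$-gerbe on $S$ whose class is the torsion class $[\mathcal{Y}]_i$. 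By the theorem of Gabber and de Jong, $\mathcal{Z}_i$ admits a nowhere zero $1$-twisted vector bundle; pulling back along $\mathcal{X} \to \mathcal{Z}_i$ produces a nowhere zero vector bundle $W_i$ on $\mathcal{X}$ of some rank $r_i$ on which the inertia $G$ acts through the scalar character $\psi_i$. The bundle $\bigoplus_i W_i$ is faithful for the induced homomorphism $G \to \prod_i \mathrm{Aut}(W_i)$, which factors through $\Psi$; equivalently it descends to a faithful vector bundle on $\mathcal{Y}$, so it sees precisely the quotient $G/U$.

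The remaining and main point is to capture the unipotent kernel $U$. Because the classes $[\mathcal{Y}]_i$ are in general nonzero, I cannot realize the faithful representation $\mathcal{E}$ itself as a twisted bundle on $\mathcal{X}$; instead I would build a twisted model of it. Concretely, I would construct a nowhere zero vector bundle $\mathcal{F}$ on $\mathcal{X}$ equipped with an inertia-stable filtration whose successive quotients are the $W_i$ and whose extension classes reproduce the off-diagonal (unipotent) structure of the inclusion $G \hookrightarrow \mathrm{GL}(\mathcal{E})$, so that $G \to \mathrm{Aut}(\mathcal{F})$ has the same kernel as $G \to \mathrm{GL}(\mathcal{E})$, namely the trivial one, making $\mathcal{F}$ faithful. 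This is carried out by d\'evissage up the flag (equivalently, along the descending central series of $U$): at each stage one must lift a twisted bundle by an extension with prescribed class, and the obstructions live in coherent cohomology of sheaves on $\mathcal{X}$ twisted by the $\psi_i$. The hard part is exactly this step --- marrying the torsion hypothesis (through the bundles $W_i$ furnished by Gabber--de Jong) with the unipotent vanishing arguments of Bragg--Hall--Mathur in the present twisted setting --- and it is where the ample line bundle on $S$ is used, to kill the relevant cohomology after twisting by a high power pulled back from $S$. Finally, a faithful vector bundle $\mathcal{F}$ exhibits $\mathcal{X}$ as a quotient stack in the usual way: the frame bundle of $\mathcal{F}$ is an algebraic space $F$ with $\mathcal{X} \cong [F/\mathrm{GL}_N]$. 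When $G \to S$ is finite locally free, the inertia acts through finitely many characters and every coherent sheaf on $\mathcal{X}$ is supported in finitely many twist-sectors; tensor powers and Schur functors of $\mathcal{F}$, together with pullbacks of powers of the ample bundle on $S$, then generate, giving the resolution property by the standard criterion.
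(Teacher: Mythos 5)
Your overall architecture is the same as the paper's: reduce to $S'$ by pushing a faithful bundle forward along the finite locally free cover (this is Lemma \ref{lemma-resforcover}(1) plus Lemma \ref{lemma-faithfulbundlemeaning}), push the gerbe forward along each diagonal character $\psi_i$ of the flag, invoke Gabber--de Jong to get nowhere zero $\psi_i$-twisted bundles $W_i = \mathcal{F}_i$ on $\mathcal{X}$, and then build a filtered bundle whose graded pieces are the $W_i$, mirroring the flag on $\mathcal{E}$. But the step you flag as ``the hard part'' is exactly where your proposed mechanism would fail, so there is a genuine gap. You want to realize, on $\mathcal{X}$ itself, extensions ``with prescribed class'' reproducing the off-diagonal structure of $G \hookrightarrow GL(\mathcal{E})$, killing obstructions in coherent cohomology ``after twisting by a high power pulled back from $S$.'' Two problems: first, the prescribed extension only exists after pulling back along an fppf cover $U \to S$ trivializing the gerbe and the $\mathcal{F}_i$ (it is there that $\mathcal{X}_U \cong BG_U$ carries the equivariant flag of $\mathcal{E}$), and the restriction map $\operatorname{Ext}^1_{\mathcal{X}}(\mathcal{F}_i, \mathcal{G}_{i-1}) \to \operatorname{Ext}^1_{\mathcal{X}_U}$ is in general not surjective, so no class on $\mathcal{X}$ need exist at all; second, $S$ is an arbitrary (not even Noetherian, let alone proper) scheme with an ample line bundle, so no Serre-type vanishing is available to kill anything. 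The paper's resolution is not a vanishing argument but an approximation argument: by Sch\"appi's theorem, $g_*\mathcal{O}_U = \operatorname{colim}_i \mathcal{P}_i$ is a filtered colimit of vector bundles on $S$ for $g : U \to S$ faithfully flat with $U$ affine, whence $\operatorname{Ext}^1_{\mathcal{X}_U}(f^*\mathcal{G}, f^*\mathcal{F}) = \operatorname{colim}_i \operatorname{Ext}^1_{\mathcal{X}}(\mathcal{G}, \mathcal{F} \otimes p^*\mathcal{P}_i)$, and Lemma \ref{lemma-extension} produces an extension on $\mathcal{X}$ (with sub twisted by some $\mathcal{P}_i$, a vector bundle from $S$, not a power of the ample line bundle) whose pullback admits a \emph{split surjection} onto the given extension on $\mathcal{X}_U$. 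This also repairs your faithfulness claim: one does not arrange that $G \to \operatorname{Aut}(\mathcal{F})$ has the same kernel as $G \to GL(\mathcal{E})$; one propagates split surjections $(\mathcal{G}_i)_U \twoheadrightarrow (\mathcal{E}^{(i)})_U$ through the induction, and faithfulness of $\mathcal{G}_n$ follows because after pullback to $U$ it surjects onto the faithful bundle $\mathcal{E}_U$ (a surjection of representations onto a faithful one is faithful).

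A secondary error: your argument for the resolution property when $G$ is finite locally free, via ``twist-sectors,'' is not available in this generality. Decomposing sheaves into character sectors requires the band to be diagonalizable; a trigonalizable finite $G$ can have unipotent kernel (e.g.\ $\alpha_p \subseteq \ker \Psi$), whose representations are not graded by characters, so coherent sheaves on $\mathcal{X}$ are not concentrated in finitely many sectors and tensor/Schur constructions on $\mathcal{F}$ plus powers of the ample bundle do not obviously generate. The paper instead deduces the resolution property from Theorem \ref{thm-finiteflatcover} (Deshmukh--Hogadi--Mathur, after Kresch--Vistoli): for a quasi-compact stack with finite diagonal whose coarse space (here $S$) admits an ample line bundle, the existence of a faithful vector bundle already implies the resolution property. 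Your reduction to $S'$ and your use of Gabber--de Jong are essentially correct as stated.
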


\begin{remark} The key point here is that $\mls X$ admits a faithful vector bundle.  The statements about being a quotient stack and having the resolution property follow from this in a standard manner, see Section \ref{subsection-quotientstacks}.
\end{remark}

%\begin{remark} The condition that $\mls X_T$ is torsion holds automatically if $G$ if a finite locally free group scheme; see ??.
%\end{remark}

With this result the natural next question is which group schemes are locally trigonalizable. The case of group schemes over a field is well-understood using classical results about algebraic groups. 

\begin{thm}[Lemma \ref{lemma-liekolchin}] Let $S = \Sp (K)$ be the spectrum of a field and let $G/S$ be a finite type affine group scheme.  Then $G$ is locally trigonalizable if it is of one of the following types:

(a) Commutative;

%(b) Local; -- is this true?

(b) Smooth, solvable, and connected.
\end{thm}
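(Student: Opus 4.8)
The plan is to reduce both cases to trigonalizability after base change to the algebraic closure $\overline{K}$, and then to descend to a finite subextension. The key observation is that trigonalizability over $\overline{K}$ is witnessed by finite data: a faithful representation $\rho\colon G_{\overline{K}}\to \mathrm{GL}(V)$ together with a complete $G_{\overline{K}}$-stable flag $0 = V_n\subset \cdots \subset V_0 = V$. Since $\overline{K} = \varinjlim K'$ is the filtered colimit of its finite subextensions $K'/K$ and $G$ is of finite type, the vector space $V$, its comodule structure, and each member of the flag descend to some finite extension $K'/K$; moreover the faithfulness condition, being the surjectivity of a map of finitely generated Hopf algebras, also holds at some finite level. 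Thus $G_{K'}$ is trigonalizable, and since $\Sp(K')\to \Sp(K)$ is a finite locally free surjection, $G$ is locally trigonalizable. It therefore suffices to prove that $G_{\overline{K}}$ is trigonalizable in each of the two cases.

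For case (b), after base change $G_{\overline{K}}$ is a smooth, connected, solvable linear algebraic group over an algebraically closed field, so the classical Lie--Kolchin theorem applies: $G_{\overline{K}}$ embeds as a closed subgroup of the upper-triangular Borel subgroup of some $\mathrm{GL}_n$. The standard coordinate flag is then $G_{\overline{K}}$-stable with one-dimensional (hence line-bundle) successive quotients, exhibiting trigonalizability.

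For case (a), I would invoke the structure theory of commutative affine algebraic groups over the perfect field $\overline{K}$, by which such a group decomposes as a product $G_{\overline{K}} = G_s\times G_u$ of a group $G_s$ of multiplicative type and a unipotent group $G_u$. Over the algebraically closed field $\overline{K}$ the factor $G_s$ is diagonalizable, say $G_s = \Sp \overline{K}[M]$ for a finitely generated abelian group $M$; a finite generating set of $M$ yields a faithful representation $V_s$ on which $G_s$ acts diagonally, whose coordinate flag has one-dimensional character quotients. The unipotent factor $G_u$ embeds in the strictly upper-triangular subgroup of some $\mathrm{GL}_m$, giving a faithful representation $V_u$ with a complete flag whose successive quotients are trivial one-dimensional representations. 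Letting $G_{\overline{K}} = G_s\times G_u$ act on $V_s\oplus V_u$ through the two projections produces a faithful representation, and concatenating the two flags yields a complete $G_{\overline{K}}$-stable flag with line-bundle quotients.

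The main obstacle is the commutative case (a), specifically the treatment of possibly non-smooth and non-connected group schemes: the naive simultaneous-triangularization-of-commuting-matrices argument handles only reduced groups, whereas infinitesimal factors such as $\mu_p$ or $\alpha_p$ require the group-scheme decomposition into multiplicative and unipotent parts. Citing the structure theorem over the perfect field $\overline{K}$ resolves this cleanly. It is precisely to accommodate examples such as a nonsplit form of $\mu_n$ or the constant group $\mathbf{Z}/n$ --- which become diagonalizable only after adjoining roots of unity --- that one must allow the passage to the finite extension $K'$ rather than insisting on trigonalizability over $K$ itself.
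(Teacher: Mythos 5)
Your proposal is correct and takes essentially the same route as the paper: base change to $\overline{K}$, Lie--Kolchin for case (b), the multiplicative-type-times-unipotent decomposition of commutative affine groups over a perfect field (SGA3, Exp.\ XVII, 7.2.1) for case (a), followed by descent of the trigonalizing data to a finite subextension, which is finite locally free over $K$. The only cosmetic differences are that you build the faithful flagged representation by hand where the paper instead invokes the characterization of trigonalizable groups as extensions of diagonalizable by unipotent groups (Lemma \ref{lemma-trigoverfield}), and that you spell out the spreading-out step that the paper carries out only in the Dedekind setting (Corollary \ref{corollary-geometricallytrig}).
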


Over general base schemes the main result is the following, discussed in Section \ref{S:section4}.

\begin{thm}
\label{thm-dedekindschemes}
    Let $S$ be an integral regular Nagata scheme of dimension $\leq 1$ with function field $K$.  If $G/S$ is an affine flat group scheme of finite type whose generic fiber $G_K$ is locally trigonalizable then $G$ is also locally trigonalizable.
\end{thm}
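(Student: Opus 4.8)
The plan is to reduce to the case where the generic fibre is already trigonalizable, and then to build a faithful flagged $G$-equivariant bundle over all of $S$ by extending the generic structure across the finitely many points where it fails to spread out. The role of the hypotheses on $S$ is twofold: regularity in dimension $\le 1$ lets me upgrade torsion-free coherent sheaves to vector bundles, and the Nagata condition makes a normalization finite.

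First I would produce the finite locally free cover witnessing local trigonalizability. By hypothesis there is a finite locally free surjection $\Sp(K')\to \Sp(K)$ with $G_{K'}$ trigonalizable; passing to a factor of the Artinian $K$-algebra $K'$ I may take $K'/K$ to be a finite field extension, since trigonalizability passes to factors and to field extensions (a faithful flag base-changes to a faithful flag). Let $S'\to S$ be the normalization of $S$ in $K'$. Because $S$ is Nagata this morphism is finite, and because $S'$ is normal of dimension $\le 1$ it is again integral, regular and Nagata with function field $K'$; miracle flatness (the source is Cohen--Macaulay, the target regular, the fibres $0$-dimensional) shows $S'\to S$ is flat, hence finite locally free and surjective. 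Replacing $S$ by $S'$ and $G$ by $G_{S'}$, it suffices to treat the case in which $G_K$ itself is trigonalizable; any further finite locally free cover then exhibits the original $G$ as locally trigonalizable.

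So assume $G_K$ is trigonalizable. Writing $G=\underline{\Sp}_S\,\mathcal{A}$ with $\mathcal{A}$ a flat, finite-type sheaf of Hopf algebras, I would choose a coherent subsheaf $\mathcal{G}\subseteq \mathcal{A}$ generating $\mathcal{A}$ as an $\mathcal{O}_S$-algebra and let $\mathcal{W}\subseteq \mathcal{A}$ be the $G$-subcomodule it generates under the comultiplication. Locally $\mathcal{W}$ is spanned by the finitely many sections appearing in the coactions of the generators, so it is coherent, and it is torsion-free since $\mathcal{A}$ is flat over the integral scheme $S$; as $S$ is regular of dimension $\le 1$, a torsion-free coherent sheaf is automatically locally free, so $\mathcal{W}$ is a vector bundle. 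The counit identity $(\epsilon\otimes\mathrm{id})\Delta=\mathrm{id}$ shows that the matrix coefficients of $\mathcal{W}$ generate $\mathcal{A}$, so $G\to GL(\mathcal{W})$ is a closed immersion and $\mathcal{W}$ is faithful. Finally, since $G_K$ is trigonalizable every finite-dimensional $G_K$-representation admits a complete flag of subrepresentations with one-dimensional quotients (the simple objects are one-dimensional), so $\mathcal{W}_K$ carries a complete $G_K$-stable flag $F^\bullet$; setting $\mathcal{F}^i=\mathcal{W}\cap j_*F^i$ for $j\colon \Sp K\hookrightarrow S$ produces saturated, $G$-stable sub-bundles (stability uses flatness of $\mathcal{A}$) whose successive quotients are torsion-free of rank one, hence line bundles. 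Thus $\mathcal{W}$ with $\mathcal{F}^\bullet$ is a faithful flagged $G$-equivariant vector bundle and $G$ is trigonalizable.

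If $S$ is not quasi-compact, so that a global generating subsheaf $\mathcal{G}$ is unavailable, I would instead spread the generic faithful flagged bundle out to an open $U\subseteq S$, note that the complementary bad locus is a finite set of closed points, and at each such point $s$ construct a $G$-stable faithful lattice over the discrete valuation ring $\mathcal{O}_{S,s}$---realized as the subcomodule of $\mathcal{A}_{\mathcal{O}_{S,s}}$ generated by algebra generators, with $G$-stability alternatively visible from the intersection of $\mathcal{W}_K$ with the cofree lattice $\mathcal{A}_{\mathcal{O}_{S,s}}^{\oplus d}$---and then glue these lattices with $\mathcal{W}|_U$ into a global torsion-free, hence locally free, sheaf. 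I expect the main obstacle to be precisely this integral step: producing, over each discrete valuation ring, a lattice that is simultaneously $G$-stable, faithful as a \emph{closed immersion} (not merely a generic monomorphism), and compatible with the flag. Here $G$-stability and flag-compatibility come from flatness of $\mathcal{A}$ together with saturation, faithfulness from the comodule-generation argument, and the passage from torsion-free to locally free from regularity in dimension $\le 1$; it is the simultaneous control of all three, against the backdrop of a possibly non-reduced, non-smooth group scheme, that makes the hypotheses on $S$ indispensable.
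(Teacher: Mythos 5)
Your proposal is correct and takes essentially the same route as the paper: you reduce to a finite field extension $L/K$ and pass to the normalization of $S$ in $L$ (finite locally free thanks to the Nagata and regularity-in-dimension-$\leq 1$ hypotheses), then produce a faithful $G$-equivariant vector bundle over the Dedekind base and saturate a complete flag on its generic fibre via $\mathcal{F}^i = \mathcal{W} \cap j_*F^i$, which is exactly the argument of Proposition \ref{prop-genericallytrig} and Corollary \ref{corollary-geometricallytrig}. The only cosmetic difference is that where the paper cites \cite[VI$_{\mathrm{B}}$.13.2]{SGA3_1} for the existence of the faithful vector bundle, you re-derive it via the subcomodule generated by a coherent sheaf of algebra generators (Serre's argument underlying that citation), and your final paragraph's concern about non-quasi-compact $S$ is absorbed by the same reference in the paper's treatment.
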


As an application of our main results, we prove that many smooth Deligne--Mumford stacks over bases are quotient stacks.  The following result is shown in Section \ref{S:section5}.

\begin{thm}
\label{thm-smoothDM}
    Let $S$ be a normal Noetherian scheme. Let $\mathcal{X}$ be a  separated Deligne--Mumford stack over $S$ which is smooth over $S$ and  whose coarse space admits an ample line bundle. Then $\mathcal{X}$ admits a faithful vector bundle, satisfies the resolution property, and is a quotient stack. 
\end{thm}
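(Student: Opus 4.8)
Throughout, write $\pi\colon \mathcal{X}\to X$ for the coarse moduli space. By the results assembled in Section \ref{subsection-quotientstacks} it suffices to construct a \emph{faithful} vector bundle on $\mathcal{X}$, since the presentation as a quotient stack and the resolution property then follow formally. The ample line bundle hypothesis makes $X$ a quasi-projective scheme, and because $\mathcal{X}$ is smooth, separated and Deligne--Mumford over $S$, the inertia $I_{\mathcal{X}}\to\mathcal{X}$ is finite and unramified. The plan is a Noetherian induction on $X$: by generic flatness I would first find a dense open $U\subseteq X$ over which $I_{\mathcal{X}}$ is finite \'etale and $\mathcal{X}_U\to U$ is a gerbe banded by a finite \'etale group scheme $G\to U$, construct a faithful bundle there, and then extend and glue across the closed complement $W=X\setminus U$, which has strictly smaller dimension.

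When $G$ is commutative the open case follows from the main results once one knows that the finite \'etale commutative group scheme $G$ is locally trigonalizable. After a finite \'etale cover $G$ becomes a constant finite abelian group, and after adjoining roots of unity (a further finite locally free cover) such a group admits a faithful upper-triangular representation, mixing diagonal characters in the prime-to-$p$ directions with unipotent blocks in residue characteristic $p$, as in the commutative case of Lemma \ref{lemma-liekolchin}. Being finite locally free and locally trigonalizable, $\mathcal{X}_U$ is automatically torsion by the Remark following the definition of torsion, and Theorem \ref{thm-faithfulbundleexists} then yields a faithful vector bundle on $\mathcal{X}_U$ directly.

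For non-commutative $G$ I would split the problem along the center. The conjugation action of the inertia on itself turns the pushforward $\mathcal{A}:=\mathrm{pr}_*\mathcal{O}_{I_{\mathcal{X}_U}}$ into a vector bundle on $\mathcal{X}_U$ on which each stabilizer acts with kernel exactly its center, so that $\mathcal{A}$ is faithful on $G/Z(G)$; it then remains to act faithfully on the finite \'etale \emph{commutative} group scheme $Z:=Z(G)$. For this I would use the central characters of $Z$: twisting by a $G$-representation with a prescribed central character is obstructed by a torsion class in $H^2(U_{\et},\mathbf{G}_m)$, which is controlled by the $\mathbf{G}_m$-case of Gabber and de Jong together with Theorem \ref{thm-faithfulbundleexists} applied to the trigonalizable group $Z$, exactly as in the commutative case above. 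Choosing finitely many central characters that jointly separate $Z$ produces a bundle $\mathcal{B}$ on which $Z$ acts faithfully, and $\mathcal{A}\oplus\mathcal{B}$ is then faithful on all of $\mathcal{X}_U$.

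To globalize, the inductive hypothesis supplies a vector bundle on a neighborhood of $\mathcal{X}_W$ faithful there; twisting by sufficiently positive powers of $\pi^*$ of the ample bundle extends both this bundle and the bundle on $\mathcal{X}_U$ to vector bundles on all of $\mathcal{X}$, and their direct sum is faithful everywhere since faithfulness at a point requires only a single faithful summand, following the extension-and-gluing arguments of Kresch--Vistoli and Bragg--Hall--Mathur. I expect the main obstacle to be the central step for non-commutative $G$: reducing faithfulness on $G$ to faithfulness on its center and matching the resulting central-character obstructions with the torsion hypothesis so that Theorem \ref{thm-faithfulbundleexists} and the $\mathbf{G}_m$-case apply, all while controlling the interplay between roots of unity and the residue characteristics of $X$ that Theorem \ref{thm-dedekindschemes} governs in dimension one. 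Verifying local trigonalizability of the center over the possibly high-dimensional normal base $X$, and the bookkeeping needed to extend faithful bundles compatibly across the strata, are the technical heart of the argument.
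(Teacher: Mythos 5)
There are two genuine gaps. First, your globalization step fails. Your Noetherian induction has no valid inductive step: the theorem's hypotheses (smoothness of $\mathcal{X}$ over a normal Noetherian $S$) are destroyed upon restriction to the closed complement $W=X\setminus U$, so the inductive hypothesis does not apply to $\mathcal{X}_W$; and more fundamentally, a vector bundle on the open substack $\mathcal{X}_U$ does not extend to a vector bundle on $\mathcal{X}$ by ``twisting by powers of $\pi^*$ of the ample bundle'' --- ampleness lets you extend sections and lift surjections, but extension of a locally free sheaf \emph{as a locally free sheaf} across a closed substack is precisely the kind of problem (closely tied to the resolution property itself) that cannot be finessed this way. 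The paper avoids all stratification and gluing: since $\mathcal{X}$ is normal, \cite[Proposition 2.1]{HomStacks} factors the coarse map \emph{globally} as $\mathcal{X}\to\mathcal{X}'\to X$ with $\mathcal{X}\to\mathcal{X}'$ a separated \'etale gerbe and $\mathcal{X}'$ generically isomorphic to $X$; then $\mathcal{X}'$ is a quotient stack by \cite[Theorem 2.18]{ehkv}, Theorem \ref{thm-finiteflatcover} supplies a finite locally free cover $Y\to\mathcal{X}'$ by a scheme with an ample line bundle, and one is reduced to an \'etale gerbe over $Y$, with the faithful bundle descending to $\mathcal{X}$ by Lemma \ref{lemma-resforcover}. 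In other words, the \'etale-gerbe structure you only obtain over a dense open in fact exists globally after rigidification, which is the structural insight your argument is missing.

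Second, your non-commutative step is broken exactly in the mixed-characteristic case that is the point of the theorem. Your bundle $\mathcal{A}=\mathrm{pr}_*\mathcal{O}_{I_{\mathcal{X}_U}}$ with the conjugation action does kill precisely the center, but your plan to handle $Z=Z(G)$ by ``finitely many central characters that jointly separate $Z$'' cannot work when a residue characteristic $p$ divides $|Z|$: for instance $\mathrm{Hom}(\mathbf{Z}/p,\mathbf{G}_m)=\mu_p$ has no nontrivial sections over a characteristic-$p$ point, so characters cannot separate the $p$-part of $Z$ there. This is exactly why the paper works with trigonalizable (not diagonalizable) structures, with unipotent upper-triangular blocks absorbing the wild part, and why Gabber--de Jong alone is insufficient. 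Moreover, to apply Theorem \ref{thm-faithfulbundleexists} to $Z$ you need a gerbe \emph{banded by} $Z$, which your $G$-gerbe $\mathcal{X}_U$ is not; the paper manufactures one via Giraud's stack of band-preserving equivalences \cite[IV.3.3]{Giraud} (see also \cite{bandedbycenter}): a finite \'etale surjection $\mathcal{Y}\to\mathcal{X}$ such that $\mathcal{Y}\to S$ is a gerbe banded by $Z(G)$, after which Corollary \ref{corollary-finiteetalecommutative} applies and Lemma \ref{lemma-resforcover} transports the faithful bundle back to $\mathcal{X}$. Your commutative endgame (local trigonalizability of finite \'etale commutative group schemes, automatic torsion, Theorem \ref{thm-faithfulbundleexists}) does agree with the paper's, but the two reductions leading to it need to be replaced as above.
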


\begin{rem}
 This Theorem follows from Theorems \ref{thm-faithfulbundleexists} and \ref{thm-dedekindschemes} via standard techniques introduced in \cite{kreschvistoli}, where Theorem \ref{thm-smoothDM} is proven when $S$ is the spectrum of a field and $\mathcal{X}$ is generically tame. The hypothesis that $\mathcal{X}$ be generically tame was removed  in \cite{bragg2025unipotentmorphisms}, but not the hypothesis that $S$ is the spectrum of a field. 
\end{rem}
 \begin{rem} After circulating a preliminary draft of this article, the authors learned that variants of theorems \ref{thm-faithfulbundleexists} and \ref{thm-dedekindschemes} have been obtained independently by Česnavičius, Hall, and Mathur (unpublished).
 \end{rem}

 \medskip\noindent\textbf{Acknowledgements.} NO partially supported by the National Science Foundation under 
DMS-2402087, and MO  was partially supported by NSF FRG grant DMS-2151946 and the Simons Collaboration on Perfection in Algebra, Geometry, and Topology.

\section{Preliminaries}

\subsection{Algebraic stacks}

We follow the conventions on algebraic stacks and their quasi-coherent sheaves used in the Stacks Project \cite{stacks-project}. By algebraic stack we mean algebraic stack over $\operatorname{Spec}(\mathbf{Z})$ and by a vector bundle on an algebraic stack $\mathcal{X}$ we mean a locally free  $\mathcal{O}_{\mathcal{X}}$-module of finite rank.

Let $S$ be a scheme and let $G \to S$ be a finitely presented flat group scheme. A $G$-gerbe means an algebraic stack $\mathcal{X} \to S$ such that there is an fppf covering $\{U_i \to U\}_i$ with $\mathcal{X}_{U_i} \cong BG_{U_i}$ as algebraic stacks over $U_i$ for each $i$. A gerbe banded by $G$ is a $G$-gerbe $\mathcal{X} \to S$ together with an isomorphism between the band of $\mathcal{X}$ and the band associated to $G$ in the category of bands on $(Sch/S)_{fppf}$, see \cite[IV.1.1]{Giraud}. If $G$ is commutative then this is the same data as an isomorphism of sheaves of groups on $\mathcal{X}_{fppf}$ from the inertia stack of $\mathcal{X}$ to $G_{\mathcal{X}}$. 

\subsection{Quotient stacks and the resolution property}
\label{subsection-quotientstacks}

Let $\mathcal{X}$ be an algebraic stack. Recall that $\mathcal{X}$ is said to \emph{have the resolution property} if every quasi-coherent sheaf $\mathcal{F}$ of finite type on $\mathcal{X}$ is a quotient of a vector bundle. A vector bundle $\mathcal{E}$ on $\mathcal{X}$ is called \emph{faithful} if for every geometric point $\bar{x} : \operatorname{Spec}(\bar{k}) \to \mathcal{X}$, the associated representation of the stabilizer group scheme $\operatorname{Aut}_{\bar{k}}(\bar{x}) \to GL({\mathcal{E}(\bar{x})})$ is a monomorphism. The following is well-known, see \cite[Lemma 2.12]{ehkv} where it is proven in a Noetherian setting.

\begin{lemma}
\label{lemma-faithfulbundlemeaning}
    Let $\mathcal{X}$ be an algebraic stack and let $\mathcal{E}$ be a vector bundle on $\mathcal{X}$ of constant rank $n \geq 0$. Then $\mathcal{E}$ is faithful if and only if the total space of the associated $GL_n$-bundle
    $$
    Isom_{\mathcal{X}}(\mathcal{O}_{\mathcal{X}}^{\oplus n}, \mathcal{E}) \to \mathcal{X}
    $$
    is an algebraic space. In particular, a quasi-compact algebraic stack admits a faithful vector bundle if and only if it is a quotient $[Y/GL_N]$ with $Y$ an algebraic space and $N\geq 0$ an integer. 
\end{lemma}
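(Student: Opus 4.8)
The plan is to reduce the first equivalence to a computation of the inertia of the frame bundle, and then to a statement about when a relative group space is trivial. Write $P := Isom_{\mathcal{X}}(\mathcal{O}_{\mathcal{X}}^{\oplus n}, \mathcal{E})$. First I would record that $P \to \mathcal{X}$ is a $GL_n$-torsor for the fppf topology; in particular it is representable, affine, smooth, and faithfully flat, $P$ is again an algebraic stack, and $\mathcal{X} \simeq [P/GL_n]$. Then I would invoke the standard criterion that an algebraic stack is an algebraic space if and only if its inertia is trivial, equivalently its diagonal is a monomorphism (see \cite{stacks-project}). So the first assertion amounts to deciding when the inertia $\mathcal{I}_P \to P$ is an isomorphism.

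Next I would identify this inertia. A point of $P$ over a scheme $T$ is a pair $(\xi, \phi)$ with $\xi \in \mathcal{X}(T)$ and $\phi : \mathcal{O}_T^{\oplus n} \xrightarrow{\sim} \xi^*\mathcal{E}$, and an automorphism of $(\xi,\phi)$ is an automorphism $g$ of $\xi$ whose induced action $g_*$ on $\xi^*\mathcal{E}$ satisfies $g_* \circ \phi = \phi$, i.e. $g_* = \mathrm{id}$. Writing $\rho : \mathcal{I}_{\mathcal{X}} \to GL(\mathcal{E})$ for the tautological action of the inertia on $\mathcal{E}$ and $K := \ker(\rho)$, this computation shows $\mathcal{I}_P \cong P \times_{\mathcal{X}} K$. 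Since $P \to \mathcal{X}$ is faithfully flat and $\mathcal{I}_P \to P$ is the pullback of $K \to \mathcal{X}$, descent of isomorphisms shows that $P$ is an algebraic space (i.e. $\mathcal{I}_P \to P$ is an isomorphism) if and only if $K \to \mathcal{X}$ is trivial, meaning its unit section is an isomorphism.

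The main point is then to show that $K \to \mathcal{X}$ is trivial if and only if all of its geometric fibers are trivial — the latter being exactly the faithfulness hypothesis, since the fiber of $K$ over a geometric point $\bar x$ is $\ker(\operatorname{Aut}_{\bar k}(\bar x) \to GL(\mathcal{E}(\bar x)))$. The forward implication is immediate, and the reverse implication is where the Noetherian hypothesis of \cite{ehkv} must be removed. The key observation is that $K \to \mathcal{X}$ is automatically locally of finite type: choosing a smooth surjection $U \to \mathcal{X}$ from a scheme and setting $R = U \times_{\mathcal{X}} U$, one of the projections $R \to U$ is smooth, so by the cancellation property for morphisms locally of finite type the map $R \to U \times U$ is locally of finite type; hence so is the diagonal of $\mathcal{X}$ and its closed subgroup $K \to \mathcal{X}$. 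Granting this, trivial geometric fibers force $K \to \mathcal{X}$ to be unramified and universally injective, hence a monomorphism, its relative diagonal being a surjective open immersion and thus an isomorphism; and a monomorphism admitting a section — here the unit — is an isomorphism, so $K \to \mathcal{X}$ is trivial. Chaining the equivalences gives the first assertion.

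Finally, for the "in particular" statement I would argue as follows. If $\mathcal{X} = [Y/GL_N]$ with $Y$ an algebraic space, then $Y \to \mathcal{X}$ is a $GL_N$-torsor and the vector bundle $\mathcal{E}$ associated to the standard representation of $GL_N$ has frame bundle $Isom_{\mathcal{X}}(\mathcal{O}_{\mathcal{X}}^{\oplus N}, \mathcal{E}) \cong Y$, an algebraic space, so $\mathcal{E}$ is faithful by the first part. Conversely, given a faithful vector bundle on the quasi-compact stack $\mathcal{X}$, I would first normalize it to constant rank $N$ — its rank is locally constant with finitely many values, and replacing it by direct sums with trivial bundles on the relevant open-and-closed pieces preserves faithfulness — and then the first part identifies its frame bundle with an algebraic space $Y$ for which $\mathcal{X} \simeq [Y/GL_N]$. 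I expect the reverse implication in the third paragraph to be the crux: correctly identifying $\mathcal{I}_P$ with $P \times_{\mathcal{X}} K$ and, above all, upgrading the pointwise faithfulness condition to the global triviality of $K$ via the automatic finite-type property of the diagonal.
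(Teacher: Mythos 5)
Your proposal is correct, and its skeleton is the same as the paper's: both proofs rest on identifying the stabilizers (equivalently, the inertia) of the frame bundle $P = Isom_{\mathcal{X}}(\mathcal{O}_{\mathcal{X}}^{\oplus n}, \mathcal{E})$ with the kernel $K = \ker\bigl(\mathcal{I}_{\mathcal{X}} \to GL(\mathcal{E})\bigr)$, and the ``in particular'' clause is handled identically in both (normalize to constant rank by adding trivial summands on finitely many clopen substacks; conversely pull back the standard representation along $\mathcal{X} = [Y/GL_N] \to BGL_N$). Where you diverge is at the criterion for being an algebraic space: the paper simply cites \cite[8.1.1]{MR1771927} --- an algebraic stack is an algebraic space if and only if its stabilizers at geometric points are trivial --- and concludes in two sentences, whereas you re-derive the needed instance of this criterion by hand. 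Your derivation is sound: the inertia of an algebraic stack, hence its closed subgroup space $K$ (closed since the unit section of the affine group space $GL(\mathcal{E})$ is a closed immersion), is automatically locally of finite type by the cancellation argument you give; triviality of the geometric fibers of $K$ then makes $K \to \mathcal{X}$ unramified (fiberwise vanishing of $\Omega$ plus Nakayama) and universally injective, hence a monomorphism (its diagonal is a surjective open immersion), and a monomorphism admitting the unit section is an isomorphism; fppf descent along the smooth surjection $P \to \mathcal{X}$ and the identification $\mathcal{I}_P \cong P \times_{\mathcal{X}} K$ finish the first assertion. What your longer route buys is self-containedness, and in particular independence from the quasi-separatedness built into the conventions of \cite{MR1771927} (the paper works in Stacks Project generality, where quasi-separatedness is not assumed, so your argument quietly closes that small gap in the citation); what the paper's route buys is brevity. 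Note also that your argument genuinely uses that faithfulness is triviality of the kernel as a \emph{group scheme}, not merely on $\bar{k}$-points --- this matches the paper's definition and is exactly what makes the fiberwise-to-global upgrade work.
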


\begin{proof}
   An algebraic stack is an algebraic space if and only if its stabilizers at geometric points are trivial \cite[8.1.1]{MR1771927}. The stabilizer group scheme of $Isom_{\mathcal{X}}(\mathcal{O}_{\mathcal{X}}^{\oplus n}, \mathcal{E})$ at a geometric point lying over the geometric point $\bar{x} : \operatorname{Spec}(\bar{k}) \to \mathcal{X}$ of $\mathcal{X}$ is isomorphic to the kernel of the associated homomorphism $\operatorname{Aut}_{\bar{k}}(\bar{x}) \to GL({\mathcal{E}(\bar{x})})$ which proves the first statement. For the second, if $\mathcal{X}$ is a quasi-compact algebraic stack with a faithful vector bundle, then it admits a faithful vector bundle $\mathcal{E}$ of constant rank $n$ (add trivial factors over finitely many closed and open substacks), and then $\mathcal{X} = [Y/GL_n]$ where $Y = Isom_{\mathcal{X}}(\mathcal{O}_{\mathcal{X}}^{\oplus n}, \mathcal{E})$ is an algebraic space. Conversely, if $\mls X= [Y/GL_N]$ then pulling back the vector bundle on $BGL_N$ corresponding to the standard representation we obtain a faithful vector bundle on $\mls X$.
\end{proof}

\begin{lemma}
\label{lemma-resforcover}
    Let $f : \mathcal{Y} \to \mathcal{X}$ be a finite locally free surjective morphism of algebraic stacks.
    \begin{enumerate}
        \item If $\mathcal{E}$ is a nowhere zero faithful vector bundle on $\mathcal{Y}$ then $f_*\mathcal{E}$ is a faithful vector bundle on $\mathcal{X}$.
        \item If $\mathcal{Y}$ satisfies the resolution property then so does $\mathcal{X}$.
    \end{enumerate}
\end{lemma}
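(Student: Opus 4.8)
The plan is to treat the two parts separately, in both cases exploiting that a finite locally free morphism is representable and affine, so that $f_*$ is exact and carries vector bundles to vector bundles: since $f_*\mathcal{O}_{\mathcal{Y}}$ is finite locally free over $\mathcal{O}_{\mathcal{X}}$, the pushforward of a locally free $\mathcal{O}_{\mathcal{Y}}$-module of finite rank is again finite locally free on $\mathcal{X}$. For part (1), having noted that $f_*\mathcal{E}$ is a vector bundle, I would check faithfulness at a geometric point $\bar{x}\colon \Sp(\bar{k})\to\mathcal{X}$ in the sense of Lemma \ref{lemma-faithfulbundlemeaning}: one must show the stabilizer group scheme $G:=\operatorname{Aut}_{\bar{k}}(\bar{x})$ acts faithfully on the fiber $(f_*\mathcal{E})(\bar{x})$. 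Forming the cartesian square with $Z:=\mathcal{Y}\times_{\mathcal{X}}\Sp(\bar{k})$, which (as $f$ is representable and finite over a field) is a finite $\bar{k}$-scheme $\Sp A$ carrying a $G$-action and a $G$-equivariant vector bundle $\mathcal{E}_Z$, flat base change (valid since $f$ is flat) identifies $(f_*\mathcal{E})(\bar{x})$ with the $G$-representation $V:=\Gamma(Z,\mathcal{E}_Z)$, i.e. with $\mathcal{E}_Z$ viewed as a finite projective $A$-module.

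The crux, and the main obstacle, is to show that $K:=\ker(G\to GL(V))$ is trivial \emph{as a group scheme}; here the nowhere-zero hypothesis is essential and one cannot simply pass to reduced points, as the example $B\mu_p\to B\mathbf{G}_m$ in characteristic $p$ shows, where the relevant $G$-representation is the regular representation whose faithfulness is carried entirely by the nilpotents of $A$. The key observation is that, since $\mathcal{E}$ is nowhere zero, $V$ is a \emph{faithful} $A$-module, so $A\hookrightarrow\operatorname{End}_{\bar{k}}(V)$; consequently any $g\in K(R)$, acting trivially on $V\otimes R$, satisfies $(a-g(a))(V\otimes R)=0$ by equivariance, and hence acts trivially on $A\otimes R$. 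Thus $K$ lands in the subgroup scheme $N\subseteq G$ acting trivially on $Z$, so that $K\subseteq N\subseteq\operatorname{Stab}_G(z)$ for each $\bar{k}$-point $z$ of $Z$ (one exists as $f$ is surjective). Since $\operatorname{Stab}_G(z)=\operatorname{Aut}_{\bar{k}}(\bar{y})$ for the corresponding geometric point $\bar{y}$ of $\mathcal{Y}$ over $\bar{x}$ (using representability of $f$), and $K$ acts trivially on the fiber $\mathcal{E}(\bar{y})=V\otimes_A\kappa(z)$, faithfulness of $\mathcal{E}$ at $\bar{y}$ forces $K=e$.

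For part (2), given a finite type quasi-coherent $\mathcal{F}$ on $\mathcal{X}$, I would invoke the right adjoint $f^!\mathcal{F}:=\mathcal{H}om_{\mathcal{O}_{\mathcal{X}}}(f_*\mathcal{O}_{\mathcal{Y}},\mathcal{F})$ to $f_*$ available for the finite locally free morphism $f$; this sheaf is of finite type on $\mathcal{Y}$, and its counit $\epsilon\colon f_*f^!\mathcal{F}\to\mathcal{F}$ is surjective. This surjectivity is the one point requiring care: it is local on $\mathcal{X}$, and holds because the unit section $1$ of the locally free sheaf $f_*\mathcal{O}_{\mathcal{Y}}$ is nowhere vanishing on fibers, hence generates a locally split $\mathcal{O}_{\mathcal{X}}$-line subbundle, yielding local retractions $r$ with $r(1)=1$ and thus local preimages $a\mapsto r(a)m$ of any section $m$ of $\mathcal{F}$; crucially this uses no invertibility of the degree of $f$ and so works in all characteristics. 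Applying the resolution property of $\mathcal{Y}$ to $f^!\mathcal{F}$ then produces a vector bundle $\mathcal{V}$ on $\mathcal{Y}$ with $\mathcal{V}\twoheadrightarrow f^!\mathcal{F}$; pushing forward (using exactness of $f_*$) and composing with $\epsilon$ gives $f_*\mathcal{V}\twoheadrightarrow\mathcal{F}$ with $f_*\mathcal{V}$ a vector bundle on $\mathcal{X}$, as required.
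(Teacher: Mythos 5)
Your proof is correct and follows essentially the same route as the paper's: in (1) you use the faithful $A$-module structure on $(f_*\mathcal{E})(\bar{x})$ (where the nowhere-zero hypothesis enters) to force the kernel into a stabilizer $G_{\bar{y}}$ of a lift and then conclude by faithfulness of $\mathcal{E}(\bar{y})$, and in (2) you use the same adjoint $f^!\mathcal{F}=\mathcal{H}om_{\mathcal{O}_{\mathcal{X}}}(f_*\mathcal{O}_{\mathcal{Y}},\mathcal{F})$ with surjectivity of the counit coming from the locally split injection $\mathcal{O}_{\mathcal{X}}\to f_*\mathcal{O}_{\mathcal{Y}}$. The only differences are cosmetic: you conclude (1) by letting the kernel act directly on the fiber $V\otimes_A\kappa(z)$ rather than via the surjection $(f_*\mathcal{E})(\bar{x})\twoheadrightarrow \mathcal{E}(\bar{y})$ of $G_{\bar{y}}$-representations, and you make the local splitting in (2) explicit via the nowhere-vanishing unit section.
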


\begin{proof}
    For (1), $f_* \mathcal{E}$ is a vector bundle since $f$ is finite locally free. Let $\bar{x} : \operatorname{Spec}(\bar{k}) \to \mathcal{X}$ be a geometric point and let $G_{\bar x}$ be the stabilizer group scheme.  Let $A$ denote the $\bar k$-algebra with $G_{\bar x}$-action $\bar x^*f_*\mls O_{\mls Y}$ so that $\mls Y_{\bar x}:= \mls Y\times _{\mls X}\bar x\simeq \Sp (A)$.  The fiber $(f_*\mls E)(\bar x)$ also has a $G_{\bar x}$-action, which we need to show is faithful, which is compatible with the natural faithful $A$-module structure in the sense that for every $\bar k$-algebra $R$ the diagram
    $$
    \xymatrix{
    G_{\bar x}(R)\times (A\otimes _{\bar k}R)\ar[d]_-{(g, a)\mapsto g(a)}\ar@{^{(}->}[r]& G_{\bar x}(R)\times \text{End}_R((f_*\mls E)(\bar x)\otimes _{\bar k}R)\ar[d]^-c\\
    A\otimes _{\bar k}R\ar@{^{(}->}[r]& \text{End}_R((f_*\mls E)(\bar x)\otimes _{\bar k}R)}
    $$
    commutes.  Here $c$ is the map induced by conjugation by the $G_{\bar x}(R)$-action on $(f_*\mls E)(\bar x)\otimes _{\bar k}R$. From the commutativity of this diagram we find that the kernel $\Gamma _{\bar x}$ of the map $G_{\bar x}\rightarrow GL((f_*\mls E)(\bar x))$ acts trivially on $A$, and therefore is contained in $G_{\bar y}\subset G_{\bar x}$ for any lift 
     $\bar{y} : \operatorname{Spec}(\bar{k}) \to \mathcal{Y}$ of $\bar x$. There is a morphism of $\bar{k}$-vector spaces
    \begin{equation}
    \label{equn-pushforwardonfibres}
     f_*\mathcal{E}(\bar{x}) = \bar{y}^*f^*f_*\mathcal{E} \to \bar{y}^*\mathcal{E} = \mathcal{E}({\bar{y}})
    \end{equation}
    which is surjective since $f$ is affine (so $f^*f_*\mathcal{E} \to \mathcal{E}$ is surjective). This is in fact a morphism of 
    representations of $G_{\bar{y}}$. Since (\ref{equn-pushforwardonfibres}) is surjective and $\mathcal{E}(\bar{y})$ is faithful, we see that 
    $(f_*\mathcal{E})(\bar{x})$ is faithful as a representation of $G_{\bar{y}}$. Then since $\Gamma _{\bar x} \subset G_{\bar{y}}$  acts trivially on $(f_*\mls E)(\bar x)$ we conclude that $\Gamma_{\bar{x}}$ is trivial, as needed.

    For (2), denote $f^!$ the right adjoint of $f_* : \operatorname{QCoh}(\mathcal{Y}) \to \operatorname{QCoh}(\mathcal{X})$ 
    and recall that $f_*f^!\mathcal{F} = \mathcal{H}om_{\mathcal{O}_{\mathcal{X}}}(f_*\mathcal{O}_{\mathcal{Y}}, \mathcal{F})$ 
    and this has the $f_*\mathcal{O}_{\mathcal{Y}}$-module structure defined by precomposition with multiplication by sections of $f_*\mathcal{O}_{\mathcal{Y}}$.  In fact, we don't need the theory of $f^!$ here, but rather can simply define $f^!\mls F$ for a quasi-coherent sheaf $\mls F$ on $\mls X$ to be the quasi-coherent sheaf  on $\mls Y$ corresponding to the quasi-coherent $f_*\mls O_{\mls Y}$-module on $\mls X$ given by $\mls Hom _{\mls O_{\mls X}}(f_*\mls O_{\mls Y}, \mls F)$.

    Now let $\mathcal{F}$ be a quasi-coherent sheaf of finite type on $\mathcal{X}$. Then $f^!\mathcal{F}$ is a quasi-coherent sheaf of finite type on $\mathcal{Y}$ and $\mathcal{Y}$ has the resolution property by assumption. Hence there is a surjection $\varphi : \mathcal{E} \to f^!\mathcal{F}$ with $\mathcal{E}$ a vector bundle on $\mathcal{Y}$. If $\psi: f_*\mathcal{E} \to \mathcal{F}$ corresponds to $\varphi$ under adjunction, then $\psi$ is a surjection from a vector bundle. To check surjectivity, note that $\psi$ is the composition 
    $$
    f_*\mathcal{E} \xrightarrow{f_*\varphi}f_*f^!\mathcal{F} \xrightarrow{\epsilon_{\mathcal{F}}} \mathcal{F}.
    $$
    Now $f_*\varphi$ is surjective since $f$ is finite and the co-unit of adjunction $\epsilon_{\mathcal{F}}$ is the map
    $$
    \mathcal{H}om_{\mathcal{O}_{\mathcal{X}}}(f_*\mathcal{O}_{\mathcal{Y}}, \mathcal{F}) \to \mathcal{H}om_{\mathcal{O}_{\mathcal{X}}}(\mathcal{O}_{\mathcal{X}}, \mathcal{F}) = \mathcal{F}
    $$
    defined by precomposition with $\mathcal{O}_{\mathcal{X}} \to f_*\mathcal{O}_{\mathcal{Y}}$. But since $f$ is finite locally free,  $\mathcal{O}_{\mathcal{X}} \to f_*\mathcal{O}_{\mathcal{Y}}$ is an injection which splits fppf locally on $\mathcal{X}$, hence $\epsilon_\mathcal{F}$ is surjective and we are done.
\end{proof}

Recall the following theorem of Gross, originally due to Totaro in the case $\mathcal{X}$ is Noetherian and normal.

\begin{thm}[\cite{Gross}, \cite{Totaro}]
\label{thm-totaro}
Let $\mathcal{X}$ be a quasi-compact and quasi-separated algebraic stack. Assume $\mathcal{X}$ has affine stabilizer group schemes at closed points. The following are equivalent:
\begin{enumerate}
    \item $\mathcal{X}$ has the resolution property.
    \item There exist a quasi-affine scheme $U$, an integer $n \geq 0$, and an action of $GL_n$ on $U$, such that $\mathcal{X} \cong [U/GL_n]$. 
\end{enumerate}
If the equivalent conditions hold, then $\mathcal{X}$ has affine diagonal.
\end{thm}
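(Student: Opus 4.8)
The plan is to prove the two implications separately and then read off the affine-diagonal statement from the presentation in (2). For (2) $\Rightarrow$ (1) I would first record that $BGL_n$ has the resolution property: over $\Sp(\mathbf{Z})$ the ring $\mathcal{O}(GL_n) = \mathbf{Z}[x_{ij}][\det^{-1}]$ is generated by the matrix coefficients of the standard representation $V$ together with $\det(V)^{-1}$, so every finitely presented $GL_n$-representation is a quotient of a finite direct sum of the bundles $V^{\otimes a}\otimes \det(V)^{-b}$. Writing $\pi \colon \mathcal{X} = [U/GL_n] \to BGL_n$, which is representable by the quasi-affine scheme $U$ and hence quasi-affine, I would then argue: for finite type $\mathcal{F}$ on $\mathcal{X}$ the counit $\pi^*\pi_*\mathcal{F} \to \mathcal{F}$ is surjective (by quasi-affineness of $\pi$), so choosing a finite type subsheaf $\mathcal{G} \subseteq \pi_*\mathcal{F}$ with $\pi^*\mathcal{G} \to \mathcal{F}$ still surjective, resolving $\mathcal{G}$ on $BGL_n$ by a vector bundle $\mathcal{E}$, and pulling back yields a surjection $\pi^*\mathcal{E} \to \mathcal{F}$ from a vector bundle. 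This settles (2) $\Rightarrow$ (1).

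For (1) $\Rightarrow$ (2) the first step is to produce a faithful vector bundle. The locus in $|\mathcal{X}|$ where a given bundle is faithful is open, and since $\mathcal{X}$ is quasi-compact an open set containing every closed point is all of $|\mathcal{X}|$; so it is enough to produce, for each closed point $x$, a bundle faithful at $x$. As $x$ is closed its residual gerbe $\mathcal{G}_x \hookrightarrow \mathcal{X}$ is a closed substack, and the stabilizer $G_x$, being affine of finite type over $k(x)$, carries a faithful finite-dimensional representation $W$; viewing $W$ as a vector bundle on $\mathcal{G}_x$ and pushing forward gives a finite type sheaf on $\mathcal{X}$, which the resolution property lets me cover by a vector bundle $\mathcal{E}_x$. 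Restricting to $x$ gives a surjection $\mathcal{E}_x(x) \twoheadrightarrow W$ of $G_x$-representations, and a representation surjecting onto a faithful one is itself faithful; since these faithful loci are open and cover the closed points, finitely many of the $\mathcal{E}_x$ have faithful loci covering $\mathcal{X}$, and their direct sum $\mathcal{E}$ is faithful everywhere. By Lemma \ref{lemma-faithfulbundlemeaning} this exhibits $\mathcal{X} \cong [Y/GL_N]$ with $Y = Isom_{\mathcal{X}}(\mathcal{O}_{\mathcal{X}}^{\oplus N}, \mathcal{E})$ an algebraic space.

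The crux is the second step, to show that $Y$ is quasi-affine. By the ampleness criterion for quasi-affineness (EGA II.5.1, in its extension to algebraic spaces) it suffices to produce global functions $f_1, \dots, f_r \in \Gamma(Y, \mathcal{O}_Y)$ whose non-vanishing loci $Y_{f_i}$ are affine and cover $Y$. The mechanism is that on the frame bundle $Y$ the pullback of $\mathcal{E}$ is canonically trivialized, so the pullbacks of $\mathcal{E}$ and of all its tensor and Schur bundles become trivial, and global sections of these bundles on $\mathcal{X}$ pull back to global functions on $Y$; applying the resolution property to structure sheaves and ideal sheaves of points on $\mathcal{X}$ then supplies enough such sections to separate points of $Y$ and to carve out affine neighborhoods. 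I expect this step to be the main obstacle, as it is where the full strength of the resolution property (beyond the mere existence of a faithful bundle) is consumed.

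Finally, granting the presentation $\mathcal{X} = [U/GL_n]$ with $U$ quasi-affine, hence separated and with affine diagonal, I would deduce that $\mathcal{X}$ has affine diagonal by base change: pulling the diagonal of $\mathcal{X}$ back along $U \times U \to \mathcal{X} \times \mathcal{X}$ gives $U \times_{\mathcal{X}} U = \{(u, u', g) \in U \times U \times GL_n : g\cdot u = u'\}$, a closed subscheme of $U \times U \times GL_n$ (closed because $U$ is separated) and therefore affine over $U \times U$; since this holds after the smooth cover $U \times U \to \mathcal{X} \times \mathcal{X}$, the diagonal of $\mathcal{X}$ is affine.
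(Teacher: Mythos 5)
The paper does not actually prove this theorem --- it imports it from \cite{Gross} and \cite{Totaro} --- so your attempt is measured against their arguments. Your easy direction and your final claim are fine: the proof that $B_{\mathbf{Z}}GL_n$ has the resolution property via the bundles $V^{\otimes a}\otimes\det(V)^{-b}$, the surjectivity of the counit $\pi^*\pi_*\mathcal{F}\to\mathcal{F}$ for the quasi-affine representable morphism $\pi:[U/GL_n]\to BGL_n$ together with a finite-type approximation of $\pi_*\mathcal{F}$, and the descent of affineness of the diagonal along the smooth cover $U\times U\to\mathcal{X}\times\mathcal{X}$ using $U\times_{\mathcal{X}}U\cong U\times GL_n$ closed in $U\times U\times GL_n$ --- all of this is correct and is essentially how Gross argues. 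But in the hard direction (1)~$\Rightarrow$~(2) there is a genuine gap, which you yourself flag: quasi-affineness of the frame bundle $Y$. The mechanism you propose would fail as stated. Faithfulness of $\mathcal{E}$ only trivializes the pullback of $\mathcal{E}$ itself (and its tensor/Schur constructions); the \emph{other} vector bundles supplied by the resolution property --- e.g.\ those resolving ideal sheaves on $\mathcal{X}$ --- pull back to nontrivial bundles on $Y$, so their sections are not global functions and cannot separate points or carve out affine loci. The actual heart of Gross's theorem is the construction of a \emph{tensor generator}: a single vector bundle $\mathcal{E}$ such that every quasi-coherent sheaf of finite type is a quotient of a finite direct sum of sheaves $\mathcal{E}^{\otimes a}\otimes(\mathcal{E}^{\vee})^{\otimes b}$. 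Producing such an $\mathcal{E}$ from the bare resolution property (a filtered-colimit amalgamation of a resolving family, delicate in the non-noetherian qcqs setting) is the main content. Once one has it, every finite-type sheaf on $Y$ is globally generated, and one then needs the characterization of quasi-affine objects by global generation of finite-type sheaves (EGA II 5.1.2), extended to algebraic spaces --- which itself requires showing $Y$ is a scheme, via the canonical map $Y\to\Sp\,\Gamma(Y,\mathcal{O}_Y)$ being a quasi-compact open immersion. None of this is in your sketch, and ``I expect this step to be the main obstacle'' concedes precisely the theorem's core.

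There are also repairable gaps in your faithful-bundle step. A residual gerbe $\mathcal{G}_x$ need not be neutral, so a faithful representation $W$ of $G_x$ over $k(x)$ does not immediately give a vector bundle (or even a sheaf) on $\mathcal{G}_x$; one must, e.g., pass to a field extension splitting the gerbe and push forward, or argue directly with a coherent sheaf supported at the closed point. Moreover, both the existence of residual gerbes at closed points and the openness of the locus where a bundle is faithful require justification in the qcqs (non-noetherian) setting --- the openness is proved in \cite{ehkv} under noetherian hypotheses, and in general needs a constructibility argument applied to the kernel of the inertia acting on $\mathcal{E}$. These can be fixed, but as written the only fully established portions of your proposal are the implications that were never in doubt.
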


For stacks with finite diagonal we have the following result of Deshmukh, Hogadi, and Mathur which generalizes an earlier result of Kresch and Vistoli over a field.

\begin{thm}[\cite{1resolutionprop}, \cite{kreschvistoli}]
\label{thm-finiteflatcover}
    Let $\mathcal{X}$ be a quasi-compact algebraic stack with finite diagonal (as a stack over $\operatorname{Spec}(\mathbf{Z})$). Let $X$ be the coarse moduli space of $\mathcal{X}$ (which exists in this generality by \cite{MR3084720}). Assume $X$ admits an ample line bundle. Then the following are equivalent:
    \begin{enumerate}
        \item $\mathcal{X}$ admits a faithful vector bundle.
        \item There exists a finite locally free surjective morphism $Y \to \mathcal{X}$ where $Y$ is a scheme with an ample line bundle.
        \item $\mathcal{X}$ has the resolution property.
    \end{enumerate}
\end{thm}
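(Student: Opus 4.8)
The plan is to establish the equivalence by dispatching the three formal implications $(3)\Rightarrow(1)$, $(2)\Rightarrow(1)$ and $(2)\Rightarrow(3)$, which follow immediately from the lemmas above, and then concentrating all of the work on $(1)\Rightarrow(2)$. For $(2)\Rightarrow(1)$ I would observe that a scheme has trivial automorphism groups at its geometric points, so the nowhere zero rank-one bundle $\mathcal{O}_Y$ is faithful on $Y$; Lemma \ref{lemma-resforcover}(1) then makes $f_*\mathcal{O}_Y$ a faithful vector bundle on $\mathcal{X}$. For $(2)\Rightarrow(3)$, the scheme $Y$ is quasi-compact, being finite over the quasi-compact $\mathcal{X}$, and carries an ample line bundle, hence has the resolution property, which descends to $\mathcal{X}$ by Lemma \ref{lemma-resforcover}(2). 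For $(3)\Rightarrow(1)$, finite diagonal makes $\mathcal{X}$ quasi-separated with finite, hence affine, stabilizers at closed points, so Theorem \ref{thm-totaro} presents $\mathcal{X}\cong[U/GL_n]$ with $U$ quasi-affine; since $U$ is in particular an algebraic space, the second half of Lemma \ref{lemma-faithfulbundlemeaning} yields a faithful bundle. It therefore remains only to prove $(1)\Rightarrow(2)$.

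For that implication I would fix a faithful bundle $\mathcal{E}$ of rank $n$, which by Lemma \ref{lemma-faithfulbundlemeaning} is the same datum as a presentation $\mathcal{X}\cong[P/GL_n]$ with $P=\operatorname{Isom}_{\mathcal{X}}(\mathcal{O}_{\mathcal{X}}^{\oplus n},\mathcal{E})$ an algebraic space, equivalently a representable morphism $\mathcal{X}\to BGL_n$. Because $\mathcal{X}$ has finite inertia it has a coarse space $\pi:\mathcal{X}\to X$, which is proper and quasi-finite, and by hypothesis $X$ carries an ample line bundle $L$. The goal is to produce a scheme $Y$ that is quasi-projective over $X$ —so that it automatically inherits an ample line bundle— together with a finite locally free surjection $Y\to\mathcal{X}$. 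My approach would be to combine $\mathcal{E}$ with a high power $\pi^*L^{\otimes m}$ to realize $\mathcal{X}$ as a locally closed substack of a smooth ambient quotient stack $[W/GL_n]$ admitting a projective morphism to $X$ (the projectivity coming from the coordinates furnished by $L$ on $X$, the $GL_n$-quotient structure from $\mathcal{E}$), and then to cut $\mathcal{X}$ down inside this ambient stack by general linear sections pulled back from those projective coordinates, lowering the relative dimension over $X$ one step at a time until the intersection is finite and flat over $\mathcal{X}$. Since the sections are pulled back from the scheme $X$, the resulting cover should itself be a scheme, quasi-projective over $X$ and with trivial stabilizers.

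The hard part is exactly this cutting-down, and above all carrying it out over an arbitrary base rather than over a field. Over a field one may invoke a Bertini-type theorem to guarantee that sufficiently general linear sections meet $\mathcal{X}$ in a cover that is at once finite, flat, surjective, and representable by a scheme; this is the route of \cite{kreschvistoli}. Over a general base —possibly arithmetic, possibly with finite residue fields— this general-position argument breaks down, and I would instead control the cover fppf-locally over $X$, where $\mathcal{X}$ becomes a quotient by a finite locally free group scheme and a finite flat cover exists tautologically as the source of the defining torsor, and then glue these local covers into a single scheme finite and flat over all of $\mathcal{X}$ using the ampleness of $L$. Verifying flatness and surjectivity of the glued cover, and checking that its total space is a genuine scheme with an ample line bundle rather than merely an algebraic space, is the technical core; this is precisely what \cite{1resolutionprop} supplies in removing the field hypothesis of \cite{kreschvistoli}. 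With $(1)\Rightarrow(2)$ established, the chain $(1)\Rightarrow(2)\Rightarrow(3)\Rightarrow(1)$ together with $(2)\Rightarrow(1)$ closes the equivalence.
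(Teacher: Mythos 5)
Your proposal is correct and takes essentially the same route as the paper: the implications $(2)\Rightarrow(3)$ (via Lemma \ref{lemma-resforcover} and the resolution property of schemes with an ample line bundle) and $(3)\Rightarrow(1)$ (via Theorem \ref{thm-totaro} and Lemma \ref{lemma-faithfulbundlemeaning}) are handled identically, and like the paper you ultimately defer the hard direction $(1)\Rightarrow(2)$ to \cite[Corollary 4.5]{1resolutionprop}, so your speculative sketch of that reference's internals, and your separate $(2)\Rightarrow(1)$, are extra but harmless. The one nuance the paper records that you omit is that the cited statement produces a finite \emph{flat} cover, while the construction in its proof actually yields a finite \emph{locally free} one, which is what (2) asserts.
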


\begin{proof}
    The difficult direction is (1) $\implies$ (2) which is proved in \cite[Corollary 4.5]{1resolutionprop}. Note that the statement there is that there exists a finite flat cover $f : Y \to \mathcal{X}$ where $Y$ is a scheme with an ample line bundle, but the morphism they construct in the proof is finite locally free. 

    (2) $\implies$ (3) follows from Lemma \ref{lemma-resforcover} and the fact that a scheme with an ample line bundle has the resolution property \cite[II, 2.2.3 and 2.2.4]{SGA6}.

    Finally, (3) $\implies$ (1) follows from Theorem \ref{thm-totaro} since we can write $\mathcal{X} \cong [U/GL_n]$ with $U$ a quasi-affine scheme. Then if $\mathcal{E}$ is the vector bundle on $\mathcal{X}$ corresponding to the universal $GL_n$-bundle $U \to [U/GL_n]$, then $\mathcal{E}$ is faithful by Lemma \ref{lemma-faithfulbundlemeaning}.
\end{proof}

This leads to the following notion for group schemes, which captures the key question considered in this paper.

\begin{defn} Let $S$ be a scheme.  A finite locally free group scheme $G\rightarrow S$ \emph{admits resolutions} if for every $S$-scheme $T$ with an ample line bundle and every gerbe $\mls X\rightarrow T$ banded by $G_T$, the stack $\mls X$ has the resolution property.
\end{defn}

\begin{corollary}
\label{corollary-extensionsofgroups}
    Let $S$ be a scheme. 
    The collection of finite locally free group schemes $G \to S$ which admit resolutions is closed under extensions.
\end{corollary}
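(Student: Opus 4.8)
The plan is to reduce the resolution property for a $G$-gerbe to the known cases for $G'$- and $G''$-gerbes by factoring $\mathcal{X}$ through the pushforward gerbe along the quotient $G \to G''$. Fix a short exact sequence $1\to G'\to G\to G''\to 1$ of finite locally free $S$-group schemes with $G'$ and $G''$ admitting resolutions, an $S$-scheme $T$ with an ample line bundle, and a gerbe $\mathcal{X}\to T$ banded by $G_T$; the goal is to show $\mathcal{X}$ has the resolution property. Since $G'$ is normal in $G$ it is preserved by inner automorphisms, so the sub-band $G'\subset G$ is defined inside the band of $\mathcal{X}$, and I may push $\mathcal{X}$ forward along the quotient $q\colon G\to G''$ (equivalently, rigidify along $G'$) to obtain a gerbe $\mathcal{Y}:=q_*\mathcal{X}\to T$ banded by $G''_T$, together with a morphism $\pi\colon\mathcal{X}\to\mathcal{Y}$ which is itself a gerbe whose relative band is a twisted form of $G'$.

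First I would dispose of the $G''$-layer. Because $G''$ admits resolutions and $T$ carries an ample line bundle, the gerbe $\mathcal{Y}$ has the resolution property. As $G''$ is finite locally free, $\mathcal{Y}$ has finite diagonal and its coarse space is $T$, so Theorem \ref{thm-finiteflatcover} produces a finite locally free surjective morphism $Y\to\mathcal{Y}$ with $Y$ a scheme admitting an ample line bundle. Pulling $\pi$ back along $Y\to\mathcal{Y}$ yields a gerbe $\mathcal{X}_Y:=\mathcal{X}\times_{\mathcal{Y}}Y\to Y$, and by construction $\mathcal{X}_Y\to\mathcal{X}$ is finite locally free and surjective, being the base change of $Y\to\mathcal{Y}$ along $\pi$.

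The main obstacle is the $G'$-layer: the gerbe $\mathcal{X}_Y\to Y$ need not be banded by the base change $G'_Y$, since its band is a priori only a twisted form of $G'_Y$, so the hypothesis that $G'$ admits resolutions does not apply directly. The twist, however, is not arbitrary: it is induced from the band of $\mathcal{Y}$ through the conjugation action $G''\to\underline{\mathrm{Out}}(G')$, and is therefore trivialized after pulling back along a suitable finite locally free surjective cover $Y'\to Y$ — for instance one splitting the relevant $G''$-torsor. Since finite morphisms pull back ample line bundles to ample line bundles, $Y'$ again admits an ample line bundle, and over $Y'$ the gerbe $\mathcal{X}_{Y'}:=\mathcal{X}_Y\times_Y Y'\to Y'$ is banded by $G'_{Y'}$. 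Checking that the twist really is governed by such a finite cover, and that trivializing it produces an honest banding by $G'_{Y'}$ rather than merely an abstract isomorphism of bands, is the one genuinely delicate point of the argument.

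With the band trivialized, the conclusion is formal. Since $G'$ admits resolutions and $Y'$ has an ample line bundle, $\mathcal{X}_{Y'}$ has the resolution property. The morphism $\mathcal{X}_{Y'}\to\mathcal{X}$ is finite locally free and surjective, being the base change along $\pi$ of the finite locally free surjection $Y'\to Y\to\mathcal{Y}$, so Lemma \ref{lemma-resforcover}(2) transfers the resolution property down to $\mathcal{X}$. As $T$ and $\mathcal{X}$ were arbitrary, $G$ admits resolutions, which is exactly the asserted closure under extensions.
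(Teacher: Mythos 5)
Your proposal follows the paper's proof step for step: push $\mathcal{X}$ forward along $G \to G''$ to obtain $\mathcal{Y}$, use that $G''$ admits resolutions together with Theorem \ref{thm-finiteflatcover} to produce a finite locally free surjection $Y \to \mathcal{Y}$ with $Y$ a scheme carrying an ample line bundle, pull back to get $\mathcal{X} \times_{\mathcal{Y}} Y \to Y$, apply the hypothesis on $G'$, and descend the resolution property along the finite locally free surjection $\mathcal{X} \times_{\mathcal{Y}} Y \to \mathcal{X}$ via Lemma \ref{lemma-resforcover}. The single point where you deviate is the interpolated cover $Y' \to Y$ trivializing a possible outer twist of the band. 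The paper has no such step: it asserts outright that $\mathcal{X} \to \mathcal{Y}$ is a gerbe banded by $G'_T$, so that $\mathcal{X} \times_{\mathcal{Y}} Y \to Y$ is banded by $G'_Y$ and the definition of admitting resolutions applies directly.

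Two remarks on that extra step. First, the concern you raise is genuine in the noncommutative case: the identification of the relative inertia of $\mathcal{X} \to \mathcal{Y}$ with $G'$ is canonical only up to conjugation by $G$, and conjugation can act on $G'$ through nontrivial \emph{outer} automorphisms (for instance $G = \mu_3 \rtimes \mathbf{Z}/2$ with $G' = \mu_3$, where the quotient acts by inversion and every nontrivial automorphism of $\mu_3$ is outer, since $\mu_3$ is abelian), so the band of $\mathcal{X} \times_{\mathcal{Y}} Y \to Y$ is a priori only a form of $G'_Y$; the paper's proof passes over this silently, and your instinct to flag it is sound. Second, however, your proposed repair is not yet an argument. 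Globally, the morphism $Y \to \mathcal{Y}$ is a section of the gerbe $\mathcal{Y} \times_T Y \to Y$, not a $G''$-torsor --- only after trivializing $\mathcal{Y}$ fppf-locally on $T$ do honest $G''$-torsors appear --- so there is no ``relevant $G''$-torsor'' over $Y$ to split; the automorphism sheaf of the section is merely an inner form of $G''_Y$. What one really has is that the twist is governed by the composite $G'' \to \underline{\mathrm{Out}}(G')$ applied to band-level data, and producing a finite locally free cover of $Y$ over which the band becomes isomorphic to that of $G'$ requires an argument you have not supplied (and which, to be fair, the paper does not supply either). Note also that your final sub-worry dissolves: by the paper's definitions a banding \emph{is} an isomorphism of bands, so once the bands are isomorphic there is nothing further to check. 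In the cases where the conjugation action of $G$ on $G'$ is inner --- in particular $G$ commutative, or $G'$ central in $G$ --- the banding by $G'$ is canonical, your cover $Y'$ is unnecessary, and your argument coincides exactly with the paper's and is complete.
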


\begin{proof}
    Let $1 \to G' \to G \to G'' \to 1$ be an extension of finite locally free group schemes over $S$ and assume $G', G''$ satisfy the property. Let $T$ be a scheme with ample line bundle and $\mathcal{X} \to T$ a gerbe banded by $G_T$. Let $\mathcal{X} \to \mathcal{Y}$ be the pushout along $G_T \to G_T''$. Then $\mathcal{Y}$ is banded by $G_T''$ and $\mathcal{X} \to \mathcal{Y}$ is a gerbe banded by $G_T'$. Since $G''$ satisfies the property, the stack $\mathcal{Y}$ satisfies the resolution property. By Theorem \ref{thm-finiteflatcover}, there is a finite locally free surjective morphism $Y \to \mathcal{Y}$ where $Y$ is a scheme with an ample line bundle. Then $\mathcal{X} \times _\mathcal{Y} Y \to Y$ is a $G_Y'$ gerbe and $G'$ satisfies the property so $\mathcal{X} \times _\mathcal{Y} Y$ has the resolution property. Then $\mathcal{X} \times _{\mathcal{Y}} Y \to \mathcal{X}$ is finite locally free surjective and the source satisfies the resolution property hence so does the target by Lemma \ref{lemma-resforcover}. 
\end{proof}

We can combine these results with Gabber's Theorem and the results of \cite{bragg2025unipotentmorphisms} to obtain the following application. This is independent from the main results of this paper but may nevertheless be of interest.

\begin{corollary}
    If $G$ is a finite group scheme over a field $k$ then $G$ admits resolutions. That is, if $S$ is a $k$-scheme with an ample line bundle and $\mathcal{X} \to S$ is a gerbe banded by $G$ then $\mathcal{X}$ admits a faithful vector bundle, is a quotient stack, and satisfies the resolution property.
\end{corollary}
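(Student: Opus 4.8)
The plan is to prove the statement by induction on the order $n=\dim_k\Gamma(G,\mathcal O_G)$ of $G$, peeling off either a self-normalizing subgroup scheme or a central simple subgroup scheme until we reach the building blocks $\mu_\ell$ (handled by Gabber \cite{dejong-gabber}) and $\alpha_p$, $\mathbf Z/p$ (handled, being unipotent, by \cite{bragg2025unipotentmorphisms}). Two preliminary reductions make the bookkeeping clean. First, if $k'/k$ is a finite extension, then for any $T/k$ with an ample line bundle the base change $T'=T\times_k k'$ is finite locally free over $T$ and again carries an ample line bundle; since $\mathcal X_{T'}\to\mathcal X$ is then finite locally free and surjective, Lemma \ref{lemma-resforcover}(2) shows that $G$ admits resolutions as soon as $G_{k'}$ does. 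Thus we may freely enlarge $k$ by a finite extension, for instance to split forms of commutative simple group schemes and to adjoin roots of unity. Second, Corollary \ref{corollary-extensionsofgroups} lets us pass between $G$ and the terms of any short exact sequence of finite locally free group schemes over $k$.

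For the inductive step I would distinguish two cases. Suppose first that $G$ has a proper subgroup scheme $H\subsetneq G$ with $N_G(H)=H$. The quotient $G/H$ is finite locally free, and the associated cover $BH\to BG$ has automorphism group scheme $\underline{\mathrm{Aut}}_G(G/H)=N_G(H)/H$, which is trivial. Being rigid, this $G/H$-bundle descends uniquely along any gerbe: for a gerbe $\mathcal X\to T$ banded by $G_T$ there is a representable finite locally free surjection $\mathcal Y\to\mathcal X$ which is, étale-locally on $\mathcal X$, the map $BH\to BG$. Hence $\mathcal Y\to T$ is again a gerbe, banded by an inner form of $H_T$; since inner forms have the same lien, $\mathcal Y$ is banded by $H_T$. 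As $|H|<|G|$, the inductive hypothesis gives $\mathcal Y$ the resolution property, and Lemma \ref{lemma-resforcover}(2) transfers it to $\mathcal X$.

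In the remaining case $G$ has no proper self-normalizing subgroup scheme, so every maximal proper subgroup scheme is normal; by the structure theory of finite group schemes \cite{DemazureGabriel} this forces $G$ to be nilpotent, and in particular $Z(G)\neq 1$. I would then choose a simple (necessarily commutative) subgroup scheme $Z_0\subseteq Z(G)$; after a finite extension of $k$ it becomes one of $\mu_\ell$, $\alpha_p$, or $\mathbf Z/\ell$. Each admits resolutions: $\mu_\ell$ by Gabber \cite{dejong-gabber}; $\alpha_p$ and, in characteristic $p$, $\mathbf Z/p$ because they are unipotent \cite{bragg2025unipotentmorphisms}; and $\mathbf Z/\ell$ with $\ell\neq p$ after adjoining $\ell$-th roots of unity, which identifies it with $\mu_\ell$. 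Since $Z_0$ is central, it is normal, and applying Corollary \ref{corollary-extensionsofgroups} to $1\to Z_0\to G\to G/Z_0\to1$ together with the inductive hypothesis for the smaller group $G/Z_0$ completes the induction.

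I expect the main obstacle to be the descent step in the self-normalizing case: one must verify that the rigid $G/H$-bundle $BH\to BG$ descends along an \emph{arbitrary} gerbe banded by $G$, not merely a neutral one, and that the resulting cover $\mathcal Y$ is genuinely banded by $H_T$. The point is that the obstruction to descent is controlled by $N_G(H)/H$, which vanishes precisely because $H$ is self-normalizing, so the transition data of $\mathcal X$ act on $H$ only through inner automorphisms and therefore leave its lien unchanged. The supporting structural input --- that a finite group scheme all of whose maximal proper subgroup schemes are normal is nilpotent with nontrivial center --- is classical but should be cited carefully from \cite{DemazureGabriel}, since the non-solvable (for example non-abelian simple étale, or non-solvable infinitesimal) cases are exactly where this dichotomy is doing the real work.
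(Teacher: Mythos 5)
Your Case 1 is not where the problem lies; the genuine gap is the structural dichotomy underlying Case 2. You assert that a finite group scheme with no proper self-normalizing subgroup scheme has all maximal proper subgroup schemes normal and is therefore nilpotent (hence has nontrivial center), citing \cite{DemazureGabriel}. For abstract finite groups this is the classical ``normalizer condition implies nilpotency,'' but its proof runs through Sylow theory and the Frattini argument (the lemma $N_G(N_G(P)) = N_G(P)$ for a Sylow subgroup $P$), and there is no analogue of Sylow theory for finite group schemes; \cite{DemazureGabriel} contains no such theorem. The claim is fine for \'etale groups (over $\bar{k}$ it reduces to abstract groups) and can be checked for height-one infinitesimal groups, where subgroup schemes correspond to restricted subalgebras of $\mathfrak{g} = \operatorname{Lie}(G)$: a Cartan subalgebra is automatically $p$-closed (if $x \in \mathfrak{h}$ then $\operatorname{ad}(x^{[p]}) = \operatorname{ad}(x)^p$ preserves $\mathfrak{h}$, so $x^{[p]} \in N_{\mathfrak{g}}(\mathfrak{h}) = \mathfrak{h}$) and self-normalizing, so the normalizer condition forces $\mathfrak{g}$ nilpotent, whence $\mathfrak{z}(\mathfrak{g}) \neq 0$. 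But for infinitesimal groups of height $> 1$ and for mixed connected--\'etale groups you give no argument and none exists in the cited literature. This is exactly the regime where finite group schemes diverge from abstract groups: e.g.\ the Frobenius kernel $(\mathrm{SL}_2)_1$ for $p \geq 5$ is a \emph{simple} finite group scheme of order $p^3$ with trivial center (its normal subgroup schemes correspond to restricted ideals of the simple Lie algebra $\mathfrak{sl}_2$), so your dichotomy is doing real, unverified work, and without it the induction does not close.

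For comparison, the paper's proof avoids normalizers entirely: it splits off the connected--\'etale sequence using Corollary \ref{corollary-extensionsofgroups}, quotes \cite[Theorem 6.3]{bragg2025unipotentmorphisms} for the \'etale quotient (you instead re-derive the \'etale case through your Case 1/Case 2 recursion), and for connected $G$ over $\bar{k}$ uses a subnormal series with factors $\alpha_p$ and $\mu_p$, finishing with Gabber's theorem \cite{dejong-gabber} for $\mu_p$, \cite[Remark 6.1]{bragg2025unipotentmorphisms} for $\alpha_p$, and Lemma \ref{lemma-resforcover} to descend from a finite extension of $k$ --- the same two reductions you set up. Ironically, the step you flagged as the main risk is sound: the cover can even be constructed canonically as the $\mathcal{X}$-scheme of subgroup schemes of the inertia fppf-locally conjugate to $H$ (fppf-locally $G/N_G(H) = G/H$), rigidity $N_G(H)/H = 1$ makes the descent isomorphisms unique and hence automatically coherent, and since $Z(G) \subseteq N_G(H) = H$ the central $2$-cocycle of the banding acts trivially on the cover, so $\mathcal{Y}$ is indeed banded by $H_T$; note that this Case 1 mechanism is precisely what one would want for groups like $(\mathrm{SL}_2)_1$, via its self-normalizing Borel Frobenius kernel, a case the paper's $\alpha_p/\mu_p$ series does not directly address. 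But as written your proposal stands or falls with the unproven Case 2 structure theorem, and that is a genuine gap, not a citation to be filled in.
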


\begin{proof}
    Let $G^0 \lhd G$ be the connected component of the identity. Then it suffices by Corollary \ref{corollary-extensionsofgroups} to prove the result for the connected group scheme $G^0$ and the \'etale group scheme $G/G^0$. The \'etale case follows from \cite[Theorem 6.3]{bragg2025unipotentmorphisms}. Therefore, we may assume $G = G^0$ is connected. If $\operatorname{char}(k) = 0$ then $G = 1$ and there is nothing to show. Assume $\operatorname{char}(k) = p > 0$. If $k = \bar{k}$, then $G$ has a sequence of subgroups
    $$
    1  = G_0 \lhd G_1 \lhd \cdots \lhd G_{n} = G
    $$
    with $G_{i+1}/G_i$ isomorphic to either $\alpha_p$ or $\mu_p$. In general, there is a finite extension $k'/k$ such that $G_{k'}$ admits such a sequence. If we prove the result for $\mathcal{X}_{k'}$ then it holds for $\mathcal{X}$ as well by Lemma \ref{lemma-resforcover}. Therefore we may assume $G$ admits such a sequence. But then by Corollary \ref{corollary-extensionsofgroups}, it suffices to prove the result when $G = \mu_p$ or $\alpha_p$. The first case follows from Gabber's Theorem \cite{dejong-gabber} and the second from \cite[Remark 6.1]{bragg2025unipotentmorphisms}.
\end{proof}

\section{Proof of Theorem \ref{thm-faithfulbundleexists}}

\subsection{Twisted sheaves}
\label{subsection-twistedsheaves}

Let $S$ be a scheme and $G/S$ a finitely presented flat group scheme. Let $\psi : G \to \mathbf{G}_{m, S}$ be a character. Let $\mathcal{X} \to S$ be a gerbe banded by $G$. Then Giraud constructs a morphism
$$
f_{\psi} : \mathcal{X} \to \mathcal{X}_{\psi}
$$
of gerbes over $S$ with $\mathcal{X}_{\psi}$ banded by $\mathbf{G}_{m, S}$ and such that on bands, $f_{\psi}$ is the morphism induced by $\psi : G \to \mathbf{G}_{m, S}$, see \cite[IV.3.1]{Giraud}. We call the gerbe $\mathcal{X}_{\psi}$ the pushforward of the gerbe $\mathcal{X}$ along $\psi$.

Recall that if $\mathcal{Y} \to S$ is a gerbe banded by $\mathbf{G}_{m, S}$, then a sheaf of $\mathcal{O}_{\mathcal{Y}}$-modules is \emph{1-twisted} if, under the identification of $\mathbf{G}_{m, \mathcal{Y}}$ with the inertia stack of $\mathcal{Y}$, $\mathbf{G}_{m, \mathcal{Y}}$ acts on $\mathcal{F}$ via the identity character.

\begin{definition}
    A sheaf $\mathcal{F}$ of $\mathcal{O}_{\mathcal{X}}$-modules is called \emph{$\psi$-twisted} if it is a pullback of a 1-twisted sheaf along $f_\psi : \mathcal{X} \to \mathcal{X}_{\psi}$.
\end{definition}

\begin{example}
    If $\mathcal{X} = BG_S$ is the trivial gerbe and $\mathcal{L}$ is the line bundle on $BG_{S}$ corresponding to the $G$-equivariant sheaf $\mathcal{O}_S$ on which $G$ acts via $\psi$, then $\mathcal{L}$ is $\psi$-twisted: It is the pullback of the tautological line bundle under $f_\psi : BG_S \to B\mathbf{G}_{m, S}$. 
\end{example}

\begin{lemma}
\label{lemma-twistedlocalstr}
    If $\mathcal{E}$ is a $\psi$-twisted vector bundle on $\mathcal{X}$, then there is an fppf covering $\{U_i \to S\}_i$ such that (1) $\mathcal{X}_{U_i} \cong BG_{U_i}$ and (2) $\mathcal{E}_{U_i} \cong \mathcal{L}_{U_i}^{\oplus r_i}$, where $\mathcal{L}$ is the line bundle on $BG_S$ from the example above. 
\end{lemma}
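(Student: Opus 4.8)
The plan is to reduce to the case of the trivial gerbe $BG$ by passing to an fppf covering, and then to read off the local structure from the description of $1$-twisted sheaves on $B\mathbf{G}_m$. Since $\mathcal{X} \to S$ is a gerbe banded by $G$, there is by definition an fppf covering $\{U_i \to S\}_i$ with $\mathcal{X}_{U_i} \cong BG_{U_i}$, which already gives (1). I would then observe that the formation of the pushforward gerbe $\mathcal{X}_\psi$ and of the morphism $f_\psi$ commutes with base change along $U_i \to S$, so that $(\mathcal{X}_\psi)_{U_i} \cong (\mathcal{X}_{U_i})_\psi$. Applying the preceding example over $U_i$ to the trivial gerbe $BG_{U_i}$ identifies this pushforward with $B\mathbf{G}_{m, U_i}$ and $(f_\psi)_{U_i}$ with $B\psi_{U_i} : BG_{U_i} \to B\mathbf{G}_{m, U_i}$, under which the tautological line bundle on $B\mathbf{G}_{m, U_i}$ pulls back to $\mathcal{L}_{U_i}$.

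Writing $\mathcal{E} = f_\psi^* \mathcal{F}$ for a $1$-twisted vector bundle $\mathcal{F}$ on $\mathcal{X}_\psi$, and using that pullback commutes with base change, we obtain $\mathcal{E}_{U_i} \cong (B\psi_{U_i})^* \mathcal{F}_{U_i}$ with $\mathcal{F}_{U_i}$ a $1$-twisted vector bundle on $B\mathbf{G}_{m, U_i}$. Now quasi-coherent sheaves on $B\mathbf{G}_{m, U_i}$ are $\mathbf{Z}$-graded quasi-coherent $\mathcal{O}_{U_i}$-modules, and the $1$-twisted condition forces $\mathcal{F}_{U_i}$ to be concentrated in weight one; hence $\mathcal{F}_{U_i} \cong \pi^* V_i \otimes \mathcal{T}$ for a vector bundle $V_i$ on $U_i$, where $\pi : B\mathbf{G}_{m, U_i} \to U_i$ is the structure morphism and $\mathcal{T}$ is the tautological line bundle. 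Pulling back along $B\psi_{U_i}$, using $\pi \circ B\psi_{U_i} = \pi_{U_i}$ (the structure morphism $BG_{U_i} \to U_i$) together with $(B\psi_{U_i})^* \mathcal{T} \cong \mathcal{L}_{U_i}$, gives $\mathcal{E}_{U_i} \cong \pi_{U_i}^* V_i \otimes \mathcal{L}_{U_i}$. Finally, since $V_i$ is Zariski-locally free, after refining the covering we may assume $V_i \cong \mathcal{O}_{U_i}^{\oplus r_i}$, whence $\mathcal{E}_{U_i} \cong \mathcal{L}_{U_i}^{\oplus r_i}$, which is (2).

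The main obstacle is the compatibility asserted in the first step: that the trivialization of $\mathcal{X}$ over $U_i$ can be chosen so that the pushforward gerbe $\mathcal{X}_\psi$, the morphism $f_\psi$, and the $1$-twisting datum are simultaneously identified with $B\mathbf{G}_{m, U_i}$, $B\psi_{U_i}$, and the tautological line bundle. This amounts to checking that Giraud's pushforward construction is compatible with base change and carries the trivial gerbe to the trivial gerbe with the expected morphism on bands, which is bookkeeping with bands and cocycles rather than a genuinely hard step. Once this is in place, the remainder is just the standard weight decomposition of sheaves on $B\mathbf{G}_m$ followed by trivializing a vector bundle on the base.
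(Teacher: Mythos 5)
Your proposal is correct and follows essentially the same route as the paper's proof: reduce fppf-locally to the trivial gerbe $BG_S$, use the classification of $1$-twisted sheaves on $B\mathbf{G}_{m}$ as $\pi^*V \otimes \mathcal{T}$, identify $(B\psi)^*\mathcal{T}$ with $\mathcal{L}$, and trivialize $V$ after a Zariski refinement. The only detail you pass over that the paper makes explicit is why the $1$-twisted sheaf $\mathcal{F}$ with $\mathcal{E} = f_\psi^*\mathcal{F}$ is itself a vector bundle, which follows since $f_\psi$ is smooth and surjective.
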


\begin{proof}
    The question is fppf local on $S$. We may certainly find an fppf covering such that (1) holds. We then may replace $\mathcal{X}$ by $\mathcal{X}_{U_i}$ and therefore assume $\mathcal{X} = BG_S$. Then by assumption, $\mathcal{E}$ is the pullback of a one-twisted vector bundle on $B\mathbf{G}_{m, S}$ (note -- $f_\psi$ is surjective and smooth so if $f_\psi^*\mathcal{H}$ is a vector bundle, then so is $\mathcal{H}$). These all have the form $\mathcal{F}\otimes \mathcal{U}$ where $\mathcal{F}$ is the pullback of a vector bundle on $S$ and $\mathcal{U}$ is the tautological line-bundle on $B\mathbf{G}_{m, S}$. Working Zariski locally on $S$, we may therefore assume $\mathcal{F} = \mathcal{O}^{\oplus r}$ so $\mathcal{E}$ is the pullback of $\mathcal{U}^{\oplus r}$, which is $\mathcal{L}^{\oplus r}$. 
\end{proof}

\subsection{Lemma on extensions}

The following arguments occur in \cite{bragg2025unipotentmorphisms}, where they play a pivotal role in the proofs of the main results. We repeat them here because they are equally important in our proof of Theorem \ref{thm-faithfulbundleexists}. The key is the following special case of Sch\"appi's Theorem. 

\begin{thm}{\cite[Theorem 1.3.1]{schäppi2012characterizationcategoriescoherentsheaves}}
    Let $S$ be a scheme with an ample line bundle. Let $g: U \to S$ be a faithfully flat morphism from an affine scheme. Then $g_*\mathcal{O}_U = \operatorname{colim}_i \mathcal{P}_i$ is a filtered colimit of vector bundles on $S$. %Furthermore, for $i  \gg 0$, the morphism $g^*\mathcal{P}_i \to \mathcal{O}_U$ corresponding to $\mathcal{P}_i \to g_* \mathcal{O}_U$ is a split surjection, and for $j \geq i$ we have compatible splittings
   % $$
   % g^*\mathcal{P}_j \cong \mathcal{O}_{U} \oplus \mathcal{Q}_j
   % $$
   % where $\mathcal{Q}_j$ is a system of vector bundles.% with colimit zero. 
\end{thm}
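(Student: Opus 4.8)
The plan is to reduce the statement to a globalized form of Lazard's theorem. First I would observe that since $U$ is affine and $S$, carrying an ample line bundle, is quasi-compact and (semi-)separated, the morphism $g$ is affine; hence $\mathcal{M} := g_*\mathcal{O}_U$ is a quasi-coherent $\mathcal{O}_S$-module, and it is flat because $g$ is flat. The algebra structure and the faithfulness of $g$ play no role in the conclusion, which concerns only $\mathcal{M}$ as an $\mathcal{O}_S$-module. Thus the theorem becomes the assertion that on a quasi-compact scheme with an ample line bundle $\mathcal{L}$, every flat quasi-coherent module is a filtered colimit of vector bundles.

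To prove this I would run Lazard's argument with vector bundles in place of finite free modules, using $\mathcal{L}$ to supply the required surjections. Consider the category $\mathcal{C}$ whose objects are pairs $(\mathcal{V}, v)$ with $\mathcal{V}$ a vector bundle on $S$ and $v : \mathcal{V} \to \mathcal{M}$ an $\mathcal{O}_S$-linear map, and whose morphisms $(\mathcal{V}, v) \to (\mathcal{W}, w)$ are maps of vector bundles over $\mathcal{M}$. There is a canonical map $\operatorname{colim}_{\mathcal{C}} \mathcal{V} \to \mathcal{M}$, and the goal is to show $\mathcal{C}$ is filtered with this map an isomorphism.

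Surjectivity of the colimit map is where ampleness enters directly: $\mathcal{M}$ is the filtered union of its finite-type quasi-coherent subsheaves, and each such subsheaf is a quotient of a finite direct sum $\bigoplus \mathcal{L}^{-n_i}$, which is a vector bundle; composing with the inclusion into $\mathcal{M}$ places the subsheaf inside the image of an object of $\mathcal{C}$. For filteredness, nonemptiness and the existence of common upper bounds are formal (use $\mathcal{V}_1 \oplus \mathcal{V}_2$), so everything comes down to coequalizing a pair $\alpha, \beta : (\mathcal{V}, v) \rightrightarrows (\mathcal{W}, w)$, equivalently to the following factorization statement: given a map of vector bundles $\gamma : \mathcal{V} \to \mathcal{W}$ with $w \circ \gamma = 0$, there is a morphism $\theta : (\mathcal{W}, w) \to (\mathcal{W}', w')$ in $\mathcal{C}$ with $\theta \circ \gamma = 0$. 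Once this is available, the same factorization applied to finite-type subsheaves of the various $\ker(v)$, together with exactness of filtered colimits, shows the colimit map is injective as well, finishing the proof.

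The main obstacle is this factorization statement, which is the global incarnation of the equational criterion of flatness. Over an affine base it is exactly the classical lemma in Lazard's proof: a relation in $\ker(w)$ can be ``pushed out'' through a finite free quotient of $\mathcal{W}$ precisely because $\mathcal{M}$ is flat. The difficulty over $S$ is twofold: the local factorizations need not glue, and one must arrange that the target $\mathcal{W}'$ is again a vector bundle rather than merely a finite-type sheaf. I would handle this by using $\mathcal{L}$ to replace the finite-type subsheaf $\mathcal{N} = \operatorname{im}(\gamma) \subseteq \ker(w)$ by a surjection from a vector bundle and to control the resulting cokernel, invoking flatness of $\mathcal{M}$ to guarantee that after twisting by a sufficiently positive power of $\mathcal{L}$ the relevant sequences remain exact and the constructed quotient is locally free. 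This step---simultaneously globalizing the equational criterion and preserving local freeness---is where the two hypotheses, flatness together with the ample line bundle, are used in an essential and delicate way.
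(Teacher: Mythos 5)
The paper does not actually prove this statement: it is quoted verbatim from Sch\"appi, and the authors point to \cite[Theorem 3.1]{bragg2025unipotentmorphisms} for a proof of the special case. So the comparison is with those proofs, and both use essentially the two hypotheses your first paragraph discards. Your opening reduction --- that the algebra structure and faithful flatness of $g$ ``play no role,'' so the theorem is equivalent to the claim that \emph{every} flat quasi-coherent module on a scheme with an ample line bundle is a filtered colimit of vector bundles --- is the fatal step. That claim is a genuine strengthening of the theorem and is not a known result: the honest globalization of Lazard's theorem over a quasi-compact quasi-separated scheme only writes a flat sheaf as a filtered colimit of \emph{finitely presented} sheaves, and upgrading the terms to vector bundles is precisely the open difficulty (it is easy on a smooth curve, where finite-type subsheaves of a flat sheaf are torsion free, hence locally free, but already on a surface there is no such mechanism). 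The cited proofs get around this by using that $\mathcal{A} = g_*\mathcal{O}_U$ is a faithfully flat quasi-coherent \emph{algebra} whose total space is affine: for instance, the diagonal $U \to U \times_S U$ splits the unit $\mathcal{O}_U \to g^*\mathcal{A}$ after pullback along $g$, so $\mathcal{O}_S \to \mathcal{A}$ is universally injective, and the $\mathcal{A}$-module structure together with affineness of $U$ supplies the global splittings and corrections of maps from vector bundles that a bare flat module does not admit.

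Consequently the gap in your sketch sits exactly where you flag it: the factorization statement. Over a ring, the equational criterion of flatness succeeds because the factorization target may be taken to be a \emph{finite free} module; globally, flatness of $\mathcal{M}$ yields factorizations of $\mathcal{W}/\operatorname{im}(\gamma) \to \mathcal{M}$ through finite free modules only locally on $S$, these local factorizations do not glue, and nothing forces the existence of a single vector bundle target. Your proposed fix --- twisting by a large power of $\mathcal{L}$ so that ``the constructed quotient is locally free'' --- cannot work as stated: ampleness produces surjections \emph{onto} finite-type sheaves from sums of negative powers of $\mathcal{L}$, whereas here you need a map \emph{out of} the finitely presented sheaf $\mathcal{W}/\operatorname{im}(\gamma)$ into a vector bundle over $\mathcal{M}$, and twisting by a line bundle never converts a non-locally-free sheaf into a locally free one. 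In short, the surjectivity and upper-bound parts of your Lazard skeleton are fine, but the coequalizing step is the entire content of the theorem, and it is exactly the place where the discarded hypotheses (faithful flatness, affineness of $U$, the algebra structure) must re-enter, as they do in the proofs of Sch\"appi and of Bragg--Hall--Mathur.
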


A proof of the special case considered above can also be found in \cite[Theorem 3.1]{bragg2025unipotentmorphisms}.

%\begin{proof}
%The first part is a special case of Sch\"appi's Theorem \cite[Theorem 1.3.1]{schäppi2012characterizationcategoriescoherentsheaves}, see also \cite[Theorem 3.1]{bragg2025unipotentmorphisms}. For the second part, since $S$ has an ample line bundle, it is quasi-compact and separated, so 
%$$
%H^0(S, g_* \mathcal{O}_U) = \operatorname{colim}_i H^0(S, \mathcal{P}_i),
%$$
%so the map $\mathcal{O}_S \to g_*\mathcal{O}_U$ factors through $\mathcal{P}_i$ for $i \gg 0$. Then adjunction gives morphisms $\mathcal{O}_U \to g^*\mathcal{P}_i \to \mathcal{O}_U$ which compose to the identity. The splittings can be chosen compatibly because if $s \in \Gamma(U, g^*\mathcal{P}_i)$ maps to $1 \in \Gamma(U, \mathcal{O}_U)$ then so does the image of $s$ in $\Gamma(U, g^*\mathcal{P}_j)$ for $j \geq i$. 
%\end{proof}

\begin{lemma}
\label{lemma-extension}
    Consider a cartesian square
    $$
    \begin{tikzcd}
        \mathcal{X}_U \ar[r, "f"] \ar[d] & \mathcal{X} \ar[d, "p"] \\
        U \ar[r, "g"] & S
    \end{tikzcd}
    $$
    where $g : U \to S$ be a faithfully flat morphism from an affine scheme to a scheme with an ample line bundle, and $\mathcal{X}$ is a quasi-compact and quasi-separated algebraic stack. Then given vector bundles $\mathcal{F}, \mathcal{G}$ on $\mathcal{X}$ and an extension
    \begin{equation}
    \label{equn-extension}
    0 \to f^*\mathcal{F} \to \mathcal{E} \to f^*\mathcal{G} \to 0
    \end{equation}
    on $\mathcal{X}_U$, there exists a vector bundle $\mathcal{P}$ on $S$, an extension
    $$
    0 \to \mathcal{F} \otimes p^*\mathcal{P} \to \mathcal{E}' \to \mathcal{G} \to 0
    $$
    of vector bundles on $\mathcal{X}$, and a split surjection $f^*\mathcal{E}' \to \mathcal{E}$ on $\mathcal{X}_U$. 
\end{lemma}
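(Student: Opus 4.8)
The plan is to phrase (\ref{equn-extension}) as an extension class, descend it from $\mathcal{X}_U$ to $\mathcal{X}$ using Sch\"appi's theorem, and then promote the resulting comparison map to a \emph{split} surjection. Set $\mathcal{H} = \mathcal{H}om_{\mathcal{O}_{\mathcal{X}}}(\mathcal{G}, \mathcal{F})$, a vector bundle on $\mathcal{X}$; the extension $\mathcal{E}$ is classified by a class $\xi \in \operatorname{Ext}^1_{\mathcal{X}_U}(f^*\mathcal{G}, f^*\mathcal{F}) = H^1(\mathcal{X}_U, f^*\mathcal{H})$, and the goal is to find a corresponding class on $\mathcal{X}$ after twisting by a bundle pulled back from $S$.

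First I would compute the target group on $\mathcal{X}$. Since $g$, hence $f$, is affine and flat, flat base change and the projection formula give $f_* f^*\mathcal{H} \cong \mathcal{H} \otimes p^* g_*\mathcal{O}_U$ with no higher direct images, so (by the Leray spectral sequence) $H^1(\mathcal{X}_U, f^*\mathcal{H}) \cong H^1(\mathcal{X}, \mathcal{H} \otimes p^* g_*\mathcal{O}_U)$. Sch\"appi's theorem writes $g_*\mathcal{O}_U = \operatorname{colim}_i \mathcal{P}_i$ as a filtered colimit of vector bundles on $S$, and since $\mathcal{X}$ is quasi-compact and quasi-separated, cohomology commutes with filtered colimits; thus
$$
H^1(\mathcal{X}, \mathcal{H} \otimes p^* g_*\mathcal{O}_U) \;=\; \operatorname{colim}_i H^1(\mathcal{X}, \mathcal{H} \otimes p^*\mathcal{P}_i) \;=\; \operatorname{colim}_i \operatorname{Ext}^1_{\mathcal{X}}(\mathcal{G}, \mathcal{F} \otimes p^*\mathcal{P}_i).
$$
Consequently $\xi$ is the image of some $\tilde\xi \in \operatorname{Ext}^1_{\mathcal{X}}(\mathcal{G}, \mathcal{F} \otimes p^*\mathcal{P}_i)$; taking $\mathcal{P} := \mathcal{P}_i$ and letting $\mathcal{E}'$ be the associated extension of $\mathcal{G}$ by $\mathcal{F} \otimes p^*\mathcal{P}$ on $\mathcal{X}$ produces the required extension of vector bundles.

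Next I would make the comparison map explicit. Unwinding the adjunction, the image of $\tilde\xi$ in $\operatorname{Ext}^1_{\mathcal{X}_U}(f^*\mathcal{G}, f^*\mathcal{F})$ is $f^*\tilde\xi$ pushed out along the map $\phi : f^*(\mathcal{F} \otimes p^*\mathcal{P}) \to f^*\mathcal{F}$ adjoint to the structure map $\mathcal{P} \to g_*\mathcal{O}_U$. Because the unit $\mathcal{O}_S \to g_*\mathcal{O}_U$ itself factors through $\mathcal{P}_i$ for $i$ large, I may enlarge $\mathcal{P}$ so that $g^*\mathcal{P} \to \mathcal{O}_U$ is split surjective; then $\phi$ is split surjective, with section $\sigma = f^*(j)$ pulled back from a map $j : \mathcal{F} \to \mathcal{F} \otimes p^*\mathcal{P}$ on $\mathcal{X}$. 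Pushing $f^*\mathcal{E}'$ out along $\phi$ realizes $\mathcal{E}$ as a quotient of $f^*\mathcal{E}'$, and since $\sigma$ is split and pulled back, the kernel of $f^*\mathcal{E}' \to \mathcal{E}$ is $f^*\mathcal{C}$ with $\mathcal{C} = \operatorname{coker}(j)$ on $\mathcal{X}$ — in particular it too is pulled back from $\mathcal{X}$.

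The main obstacle is upgrading this surjection to a split one, which I expect to be the technical heart of the lemma (and where the ample line bundle on $S$ enters essentially). The obstruction to splitting lies in $\operatorname{Ext}^1_{\mathcal{X}_U}(\mathcal{E}, f^*\mathcal{C})$, and a direct calculation with the unit section shows that the \emph{extension-respecting} pushout map splits precisely when $\xi$ already descends to $\mathcal{X}$; so in general a non-tautological argument is needed, and the splitting cannot simply be the pushout map. My plan is to use the remaining freedom — enlarging $\mathcal{P}$ along the colimit and adjusting $\tilde\xi$ within its fiber over $\xi$ — to realize $\mathcal{E}$ as a genuine direct summand of $f^*\mathcal{E}'$ rather than merely a quotient. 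Here I would exploit that $f_*\mathcal{E}$ is itself a filtered colimit of vector bundles on $\mathcal{X}$ (each an extension of some $\mathcal{G} \otimes p^*\mathcal{P}_i$ by some $\mathcal{F} \otimes p^*\mathcal{P}_j$), so that the surjective counit $f^*f_*\mathcal{E} \to \mathcal{E}$ is approximated at a finite stage by a surjection from a pulled-back bundle, which after pulling back the top quotient along the unit $\mathcal{G} \to \mathcal{G}\otimes p^*\mathcal{P}_i$ has the required extension type. Controlling the splitting of this finite-stage approximation — for which the filtered-colimit computation of the relevant $\operatorname{Ext}$ and $\operatorname{Hom}$ groups is again the key tool — is the step I expect to be most delicate, and is presumably exactly the point where the argument reproduces that of \cite{bragg2025unipotentmorphisms}.
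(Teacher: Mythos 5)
Your first two paragraphs track the paper's proof faithfully: the identification $\operatorname{Ext}^1_{\mathcal{X}_U}(f^*\mathcal{G}, f^*\mathcal{F}) = \operatorname{colim}_i \operatorname{Ext}^1_{\mathcal{X}}(\mathcal{G}, \mathcal{F}\otimes p^*\mathcal{P}_i)$ via Sch\"appi's theorem, the choice of a finite-stage class $\tilde\xi$ defining $\mathcal{E}'$, and the description of the comparison map $f^*\mathcal{E}' \to \mathcal{E}$ as a pushout along $\phi$ with kernel pulled back from $\mathcal{X}$ are all exactly the paper's first half. The gap is at the crux: you never construct the split surjection, and your final paragraph is a program (``I expect,'' ``presumably'') rather than an argument. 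Worse, the claim steering that program --- that the pushout comparison map ``splits precisely when $\xi$ already descends to $\mathcal{X}$,'' so that ``the splitting cannot simply be the pushout map'' --- is false, and the paper's proof shows the opposite. In the colimit over $i$, the top row of the comparison diagram is the pullback along $f$ of the extension of $\mathcal{G}$ by $f_*f^*\mathcal{F}$ corresponding to (\ref{equn-extension}) under adjunction, and the unit--counit triangle identity shows this top row is \emph{also} the pushout of (\ref{equn-extension}) along the unit map $f^*\mathcal{F} \to f^*f_*f^*\mathcal{F}$, which is a split injection, split by the counit $f^*f_*f^*\mathcal{F}\to f^*\mathcal{F}$. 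Pushing an extension out along a split injection always produces a split surjection back onto the original middle term, with no descent hypothesis on $\xi$ whatsoever. So the comparison map splits at the colimit stage; and since $\mathcal{E}$ is a vector bundle on a quasi-compact, quasi-separated stack, $\operatorname{Hom}_{\mathcal{X}_U}(\mathcal{E}, -)$ commutes with the filtered colimit, so the splitting $\mathcal{E}\to f^*\mathcal{E}'$ factors through $f^*\mathcal{E}'_i$ for $i\gg 0$ and the composite $\mathcal{E}\to f^*\mathcal{E}'_i\to\mathcal{E}$ is already the identity. That is the entire remaining content of the paper's proof.

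Your instinct that the splitting can fail at the \emph{initially chosen} index $i$ is correct --- the obstruction is the difference between $f^*\tilde\xi_i$ and $\sigma_*\xi$ in $\operatorname{Ext}^1(f^*\mathcal{G}, \ker\phi)$ --- but simply enlarging $i$ kills it, because the two classes agree in the colimit by the identity above; the extra freedom you propose to exploit (adjusting $\tilde\xi$ within its fiber, or approximating $f_*\mathcal{E}$ by finite stages) is unnecessary. Note also that the mechanism you sketch in the last paragraph has a flaw of its own: the counit $f^*f_*\mathcal{E}\to\mathcal{E}$ is \emph{not} split for a general sheaf $\mathcal{E}$ on $\mathcal{X}_U$ --- the triangle identity only splits it for sheaves of the form $f^*(-)$ --- which is precisely why the paper works not with $f_*\mathcal{E}$ itself but with its pullback along the unit $\mathcal{G}\to f_*f^*\mathcal{G}$, and obtains the section from the split injection $f^*\mathcal{F}\to f^*f_*f^*\mathcal{F}$ on the subobject side. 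Completing your outline would force you to rediscover exactly this unit--counit computation; as written, the heart of the lemma is missing.
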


\begin{proof}
    We have
    $$
    \operatorname{Ext}^1_{\mathcal{X}_U}(f^*\mathcal{G}, f^*\mathcal{F}) = \operatorname{Ext}^1_{\mathcal{X}} (\mathcal{G}, \mathcal{F} \otimes f_* \mathcal{O}_{\mathcal{X}_U}) = \operatorname{colim}_i\operatorname{Ext}^1(\mathcal{G}, \mathcal{F} \otimes p^*\mathcal{P}_i).
    $$
    where $\mathcal{P}_i$ is a filtered system of vector bundles on $S$ whose colimit is $g_* \mathcal{O}_U$. Choose an index $i$ and an extension class
    \begin{equation}
    \label{equn-extensionapprox}
    0 \to \mathcal{F} \otimes p^*\mathcal{P}_i \to \mathcal{E}'_i \to \mathcal{G} \to 0
     \end{equation}
    which maps to the class of (\ref{equn-extension}) in the limit. Concretely, this means its pullback along $f$ fits into a commutative diagram
    \begin{equation}
    \label{equn-tworowdiagram}
    \begin{tikzcd}
        0 \ar[r] &f^*(\mathcal{F}\otimes p^*\mathcal{P}_i) \ar[r] \ar[d] & f^*\mathcal{E}'_i \ar[r] \ar[d] & f^*\mathcal{G} \ar[r] \ar[d] &0 \\
        0 \ar[r] &f^*\mathcal{F} \ar[r]  & \mathcal{E} \ar[r]  & f^*\mathcal{G} \ar[r] &0, \\
    \end{tikzcd}
    \end{equation}
    where the bottom extension is (\ref{equn-extension}) and is the pushout of the top extension along the map $f^*(\mathcal{F} \otimes p^*\mathcal{P}_i) \to f^*\mathcal{F}$ induced by the morphism $\mathcal{P}_i \to g_*\mathcal{O}_U$. We must show that after possibly increasing $i$, the map $f^*\mathcal{E}'_i \to \mathcal{E}$ is a split surjection. 

    In the colimit over $i$, the diagram (\ref{equn-tworowdiagram}) becomes identified with the diagram
    $$
 \begin{tikzcd}
        0 \ar[r] &f^*f_*f^*\mathcal{F} \ar[r] \ar[d] & f^*\mathcal{E}'
        \ar[r] \ar[d] & f^*\mathcal{G} \ar[r] \ar[d] &0 \\
        0 \ar[r] &f^*\mathcal{F} \ar[r]  & \mathcal{E} \ar[r]  & f^*\mathcal{G} \ar[r] &0, \\
    \end{tikzcd}
    $$
    in which the top extension is the pullback along $f$ of the extension of $\mathcal{G}$ by $f_*f^*\mathcal{F}$ corresponding to (\ref{equn-extension}) under $\operatorname{Ext}^1(\mathcal{G}, f_*f^*\mathcal{F}) = \operatorname{Ext}^1(f^*\mathcal{G}, f^*\mathcal{F})$; and the bottom extension is the pushout of the top extension along the map $f^*f_*f^*\mathcal{F} \to f^*\mathcal{F}$ induced by the counit of adjunction $f^*f_* \to 1$. It follows from abstract nonsense that $f^*\mathcal{E}' \to \mathcal{E}$ is a split surjection: Namely, it follows from the unit-counit identities that the top row can also be obtained as the pushout of the extension (\ref{equn-extension}) along the map $f^*\mathcal{F} \to f^*f_*f^*\mathcal{F}$ coming from the unit of adjunction, and this is an injection split by the map $f^*f_*f^*\mathcal{F} \to f^*\mathcal{F}$ induced by the counit. A splitting $\mathcal{E} \to f^*\mathcal{E}'$ factors through $f^*\mathcal{E}'_i$ for $i \gg 0$ and this proves the result.
   % 
   % 
   % 
   % By the previous Lemma, after possibly shrinking the index category, the system $f^*(\mathcal{F} \otimes p^*\mathcal{P}_i)$ is isomorphic to a system $f^*\mathcal{F} \oplus \mathcal{Q}_i$ where $\mathcal{Q}_i$ is a system of vector bundles on $\mathcal{X}_U$, and the left vertical arrow in the diagram is identified with projection onto the first factor. Then the class $\xi_i$ of the top extension lives in
  %  $$
   % \operatorname{Ext}^1(f^*\mathcal{G}, f^*\mathcal{F} \oplus \mathcal{Q}_i) = \operatorname{Ext}^1(f^*\mathcal{G}, f^*\mathcal{F}) \oplus \operatorname{Ext}^1(f^*\mathcal{G}, \mathcal{Q}_i),
  %  $$
   % so writing $\xi_i = (a, b_i)$, by our choice of extension class, for sufficiently large $i$ we have $b_i = 0$. But for such an $i$, the map $f^*\mathcal{E}' \to \mathcal{E}$ is a split surjection.
\end{proof}

\subsection{Proof of Theorem \ref{thm-faithfulbundleexists}}

We restate the result here fixing also the notation for the proof.

\begin{thm}
\label{thm-main}
    Let $S$ be a scheme with an ample line bundle. Let $G \to S$ be a finitely presented flat group scheme. Let $\mathcal{X} \to S$ be a gerbe banded by $G$. Assume there exist:
    \begin{enumerate}
        \item A finite locally free surjection $S' \to S$; and
        \item A faithful $G_{S'}$-equivariant vector bundle $\mathcal{E}$ on $S'$ which admits a filtration by $G_{S'}$-stable sub-bundles $\mathcal{E}^\bullet$ such that $\mathcal{E}^i/\mathcal{E}^{i+1}$ is a line bundle for each $i$;
    \end{enumerate}
    such that, if 
    $$\Psi^{\mathcal{E}^\bullet} : G_{S'} \to \prod_i \operatorname{Aut}(\mathcal{E}^i/\mathcal{E}^{i+1}) \cong \mathbf{G}_{m, S'}^n$$
    is as in (\ref{equn-maptotorus}), then the class of the pushforward gerbe $\Psi^{\mathcal{E}^{\bullet}}_*\mathcal{X}$ in
    $H^2(S'_{\acute{e}t}, \mathbf{G}_m^n)$ is torsion. Then $\mathcal{X}$ admits a faithful vector bundle and is a quotient stack.
\end{thm}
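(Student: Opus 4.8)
The plan is to reduce to the \emph{trigonalizable} case over $S'$, construct a faithful vector bundle there by reproducing the flag of $\mathcal{E}$ one graded piece at a time, and then descend back to $S$. Since $\mathcal{X}_{S'} := \mathcal{X}\times_S S' \to \mathcal{X}$ is finite locally free and surjective and $S'$ carries an ample line bundle (the pullback of an ample line bundle along the finite morphism $S'\to S$ is ample), Lemma \ref{lemma-resforcover}(1) shows it suffices to produce a nowhere zero faithful vector bundle on $\mathcal{X}_{S'}$; so I may and do assume $S=S'$, that $G$ is trigonalizable with complete flag $\mathcal{E}^\bullet$, and that the class of $\Psi^{\mathcal{E}^\bullet}_*\mathcal{X}$ is torsion. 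Write $L_i := \mathcal{E}^i/\mathcal{E}^{i+1}$ and let $\psi_i : G\to \mathbf{G}_{m,S}$ be the associated character, so that $\Psi^{\mathcal{E}^\bullet}=(\psi_0,\dots,\psi_{n-1})$. As $H^2(S_\et,\mathbf{G}_m^n)=H^2(S_\et,\mathbf{G}_m)^n$, torsion of the $\Psi$-pushforward means each pushforward $\mathbf{G}_m$-gerbe $\mathcal{X}_{\psi_i}$ is torsion; by the theorem of Gabber and de Jong \cite{dejong-gabber} each admits a nonzero $1$-twisted vector bundle, whose pullback along $f_{\psi_i}$ is a nonzero $\psi_i$-twisted vector bundle $V_i$ on $\mathcal{X}$. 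By Lemma \ref{lemma-twistedlocalstr}, after passing to a common affine faithfully flat finitely presented cover $g:U\to S$ I may assume $\mathcal{X}_U\cong BG_U$ and $V_i|_U\cong \mathcal{L}_i^{\oplus r_i}$ with $r_i\ge 1$, where $\mathcal{L}_i$ is the tautological $\psi_i$-twisted line bundle; under this identification $\mathcal{E}$ is the faithful $G$-representation on $BG_U$ with graded pieces $\mathcal{L}_i$.

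For the main construction, let $f:\mathcal{X}_U\to \mathcal{X}$ and $p:\mathcal{X}\to S$ be the projections, and set $\mathcal{Q}^k := \mathcal{E}/\mathcal{E}^k$ on $\mathcal{X}_U$, so that $0\to \mathcal{L}_k\to \mathcal{Q}^{k+1}\to \mathcal{Q}^k\to 0$. I build, by induction on $k$, a vector bundle $\mathcal{R}_k$ on $\mathcal{X}$ together with a surjection $\sigma_k : f^*\mathcal{R}_k\twoheadrightarrow \mathcal{Q}^k$ on $\mathcal{X}_U$, starting from $\mathcal{R}_1=V_0$ with $\sigma_1$ the projection $\mathcal{L}_0^{\oplus r_0}\to \mathcal{L}_0$. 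Given $(\mathcal{R}_k,\sigma_k)$, I pull the displayed extension back along $\sigma_k$ and push it out along the split inclusion $\mathcal{L}_k\hookrightarrow f^*V_k$ to obtain an extension $0\to f^*V_k\to \mathcal{A}\to f^*\mathcal{R}_k\to 0$ on $\mathcal{X}_U$ equipped with a surjection $\mathcal{A}\twoheadrightarrow \mathcal{Q}^{k+1}$. Applying Lemma \ref{lemma-extension} with $\mathcal{F}=V_k$ and $\mathcal{G}=\mathcal{R}_k$ yields a vector bundle $\mathcal{P}_k$ on $S$, an extension $0\to V_k\otimes p^*\mathcal{P}_k\to \mathcal{R}_{k+1}\to \mathcal{R}_k\to 0$ on $\mathcal{X}$, and a split surjection $f^*\mathcal{R}_{k+1}\twoheadrightarrow \mathcal{A}$; composing with $\mathcal{A}\twoheadrightarrow \mathcal{Q}^{k+1}$ gives $\sigma_{k+1}$. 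After $n$ steps I obtain $\mathcal{V}:=\mathcal{R}_n$ on $\mathcal{X}$ together with a surjection $f^*\mathcal{V}\twoheadrightarrow \mathcal{Q}^n=\mathcal{E}$.

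Since $\mathcal{E}$ is a faithful $G$-representation and faithfulness of a vector bundle may be checked on the fppf cover $U$, the surjection $f^*\mathcal{V}\twoheadrightarrow \mathcal{E}$ forces the kernel of $G$ acting on $f^*\mathcal{V}$ into the trivial kernel of $G$ acting on $\mathcal{E}$; hence $\mathcal{V}$ is faithful and nowhere zero. Pushing $\mathcal{V}$ forward to the original gerbe over $S$ via Lemma \ref{lemma-resforcover}(1) produces a faithful vector bundle there, and Lemma \ref{lemma-faithfulbundlemeaning} then presents $\mathcal{X}$ as a quotient stack $[Y/GL_N]$. The heart of the argument, and the step I expect to be most delicate, is the need to reproduce the \emph{extension} structure of $\mathcal{E}$ rather than merely its associated graded: the kernel of $\Psi^{\mathcal{E}^\bullet}$ is a unipotent subgroup scheme acting trivially on every $L_i$, so $\bigoplus_i V_i$ is never faithful and the individual twisted pieces must be glued with the correct extension class. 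These extensions are available only fppf-locally on $S$, and upgrading them to genuine extensions on $\mathcal{X}$ is precisely the role of Lemma \ref{lemma-extension} (via Sch\"appi's theorem), at the unavoidable cost of the twists $p^*\mathcal{P}_k$.
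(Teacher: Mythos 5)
Your proposal is correct and is essentially the paper's own proof: both reduce to $S'$, apply the Gabber--de Jong theorem to the $n$ pushforward $\mathbf{G}_m$-gerbes to get $\psi_i$-twisted bundles, inductively reassemble the flag of $\mathcal{E}$ on $\mathcal{X}_U$ into genuine extensions on $\mathcal{X}$ via Lemma \ref{lemma-extension}, and check faithfulness on $U$ through the surjection onto $\mathcal{E}_U$. The only (cosmetic) difference is bookkeeping: you induct on the quotients $\mathcal{Q}^k=\mathcal{E}/\mathcal{E}^k$ using pullback along $\sigma_k$ and pushout along $\mathcal{L}_k\hookrightarrow f^*V_k$, needing only plain surjectivity in the inductive hypothesis, whereas the paper inducts on the sub-bundles $(\mathcal{E}_i)_U$ with split surjections assembled via bi-additivity of $\operatorname{Ext}^1$.
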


\begin{proof}%[Proof of Theorem \ref{thm-faithfulbundleexists}]
    If we find a faithful vector bundle on $\mathcal{X}_{S'}$ then we are done by Lemmas \ref{lemma-faithfulbundlemeaning} and \ref{lemma-resforcover}. We may therefore replace $S$ by $S'$. For each $i$, let $\mathcal{L}_i$ be the $G$-equivariant line bundle $\mathcal{E}^i/\mathcal{E}^{i+1}$ on $S$ and denote 
    $$
    \psi_i : G \to \operatorname{Aut}(\mathcal{E}^i/\mathcal{E}^{i+1}) \cong \mathbf{G}_{m, S}
    $$
    the associated character. Let $\mathcal{X} \to \mathcal{X}_i$ be the pushforward of $\mathcal{X}$ along $\psi_i$. 
    
    Then by assumption, $\mathcal{X}_i$ is a gerbe banded by $\mathbf{G}_m$ whose class in $H^2(S_{\acute{e}t}, \mathbf{G}_m)$ is torsion. By the Theorem of de Jong and Gabber \cite{dejong-gabber}, for each $i$ there exists a faithful 1-twisted vector bundle on each $\mathcal{X}_i$, and therefore by pullback a nowhere zero $\psi_i$-twisted vector bundle $\mathcal{F}_i$ on $\mathcal{X}$ for each $i$, see \ref{subsection-twistedsheaves}. We may assume each $\mathcal{F}_i$ has constant rank. Now choose an fppf covering $U \to S$ such that $\mathcal{X}_U \cong BG_U$ and $(\mathcal{F}_i)_U \cong (\mathcal{L}_i)_U^{\oplus r_i}$. %(see Lemma \ref{lemma-twistedlocalstr}, we have chosen $U$ so that the underlying line bundle of $(\mathcal{L}_i)_U$ is trivial). 

    We will prove by induction that for all $i$ there exists a vector bundle $\mathcal{G}_i$ on $\mathcal{X}$ and a split surjection $(\mathcal{G}_i)_U \twoheadrightarrow (\mathcal{E}_i)_U$. For $i = 1$, we may take $\mathcal{G}_1 = \mathcal{F}_1$ and this works. Now suppose we are given a vector bundle $\mathcal{G}_{i-1}$ and split surjection
    $(\mathcal{G}_{i-1})_U \to (\mathcal{E}_{i-1})_U$. We also have a split surjection $(\mathcal{F}_i)_U \to (\mathcal{L}_i)_U$. Now choose an extension that fits into a commutative diagram
    $$
    \begin{tikzcd}
        0 \ar[r] & (\mathcal{G}_{i-1})_U \ar[r] \ar[d] & \mathcal{H} \ar[r] \ar[d] & (\mathcal{F}_i)_U \ar[r] \ar[d] & 0 \\
        0 \ar[r] & (\mathcal{E}_{i-1})_U \ar[r] & (\mathcal{E}_{i})_U \ar[r] & (\mathcal{L}_i)_U \ar[r] & 0
    \end{tikzcd}
    $$
    in which the middle vertical arrow is a split surjection (we can do this because the functor $\operatorname{Ext}^1(-, -)$ is bi-additive). Now Lemma \ref{lemma-extension} gives us an extension
    $$
    0 \to \mathcal{G}_{i-1} \to \mathcal{G}_i \to \mathcal{F}_i \to 0
    $$
    such that there exists a split surjection $(\mathcal{G}_i)_U \to \mathcal{H}$ and therefore also a split surjection $(\mathcal{G}_i)_U \to (\mathcal{E}_i)_U$. This completes the induction.

    Now we are done because $\mathcal{G} = \mathcal{G}_n$ is faithful: This may be checked after pullback to $U$, where it surjects onto the faithful bundle $\mathcal{E}_U$. 
\end{proof}

\subsection{Remarks on the assumptions}
\label{subsection-remarks}

The assumption that $\mathcal{X}$ is torsion in Theorem \ref{thm-faithfulbundleexists} is somewhat inelegant because it depends on a choice of faithful bundle with a complete flag, as the following lemma shows.

For an integer $n > 0$ let $B_n \subset GL_n$ be the subgroup scheme of invertible upper triangular matrices. There is a homomorphism $d_n: B_n \to \mathbf{G}_m^n$ taking a matrix to its diagonal entries. 

\begin{lemma}
    There exist: a scheme $S$; a group scheme $G/S$; two different embeddings into upper triangular matrices, $\varphi: G \to B_{n, S}$ and $\psi : G \to B_{m, S}$ (both homomorphisms of $S$-group schemes which are closed immersions); and a gerbe $\mathcal{X}/S$ banded by $G$; such that: the class of the pushout gerbe $(d_n \circ \varphi)_*\mathcal{X}$ in $H^2(S_{\acute{e}t}, \mathbf{G}_m^n)$ is torsion while the class of $(d_m \circ \psi)_*\mathcal{X}$ in $H^2(S_{\acute{e}t}, \mathbf{G}_m^m)$ is not. 
\end{lemma}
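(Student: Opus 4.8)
The plan is to build the example from a degeneration of $\mathbf{G}_m$ to $\mathbf{G}_a$, glued over two components, so that a single group scheme carries two independent characters that can be traded against each other inside a complete flag. The starting observation is that, since $\chi \mapsto \chi_*[\mathcal X]$ is additive in the character $\chi$ and the torsion classes form a subgroup, whether $(d_n\circ\varphi)_*\mathcal X$ is torsion depends only on the subgroup $M_\varphi \subseteq X^*(G) = \operatorname{Hom}(G,\mathbf{G}_m)$ generated by the diagonal characters of the flag: it is torsion exactly when $\chi_*[\mathcal X]$ is torsion for every $\chi \in M_\varphi$. So I need a $G$ admitting two faithful complete flags whose diagonal subgroups $M_\varphi, M_\psi$ are genuinely different, together with a character detecting a non-torsion class in one but not the other.

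For the group I would take $S$ to be two pieces $S_1, S_2$ glued along a common locus $Z$, and on each $S_i$ the degeneration group scheme $\mathcal G_i = \{x: x*y = x+y+t_i xy\}$, which is $\mathbf{G}_m$ away from $Z$ (with character $\chi_i = 1+t_i x$) and $\mathbf{G}_a$ along $Z$; gluing $\mathcal G_1,\mathcal G_2$ along their common $\mathbf{G}_a$ over $Z$ yields $G$ with $X^*(G) = \mathbf{Z}\chi_1 \oplus \mathbf{Z}\chi_2$, where $\chi_i$ is genuine on $S_i$ and trivial on the other component. The key point is that faithfulness of a flag only forces its diagonal subgroup to surject onto the rank-one character group of each component, not to equal all of $X^*(G)$. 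Concretely, embedding $x \mapsto \left(\begin{smallmatrix} 1+t_i x & x \\ 0 & 1\end{smallmatrix}\right)$ on both components gives a faithful closed immersion into $B_{2,S}$ with diagonal character $\chi_1+\chi_2$, whereas replacing the second component by the weight $-1$ embedding $x \mapsto \left(\begin{smallmatrix} (1+t_2 x)^{-1} & x(1+t_2 x)^{-1} \\ 0 & 1\end{smallmatrix}\right)$ — which one checks is a homomorphism and a closed immersion agreeing with the first one over $Z$ — gives a second faithful embedding with diagonal character $\chi_1-\chi_2$. Thus $M_\varphi = \langle \chi_1-\chi_2\rangle$ and $M_\psi = \langle \chi_1+\chi_2\rangle$ are distinct rank-one subgroups.

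It then remains to produce $S$ and a gerbe $\mathcal X$ banded by $G$ with $\chi_{1*}[\mathcal X] = \chi_{2*}[\mathcal X] = a$ for a non-torsion $a \in H^2(S_\et, \mathbf{G}_m)$: then $(\chi_1-\chi_2)_*[\mathcal X] = 0$ is torsion while $(\chi_1+\chi_2)_*[\mathcal X] = 2a$ is not, and relabelling $\varphi,\psi$ gives the statement. Since regular schemes have torsion $H^2(\mathbf{G}_m)$, the base must be non-regular; I would arrange $Z$ to be (a thickening of) a smooth curve of positive genus so that $\operatorname{Pic}^0(Z)$ is non-torsion, and glue $S_1, S_2$ so that by Mayer--Vietoris the boundary $\operatorname{Pic}(Z) \to H^2(S_\et, \mathbf{G}_m)$ carries $\operatorname{Pic}^0(Z)$ to non-torsion classes restricting to zero on each component — the mechanism behind Grothendieck's examples of non-torsion cohomological Brauer classes. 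One then builds $\mathcal X$ by gluing the trivial gerbes $B\mathcal G_i$ over $Z$ along a cocycle representing a non-torsion $\mathcal N \in \operatorname{Pic}^0(Z)$.

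The crux — and the step I expect to be the real obstacle — is the non-torsion computation $\chi_{i*}[\mathcal X] = a \neq 0$. Naively this fails: $\chi_i$ restricts to the trivial character on $Z_{\mathrm{red}}$, where $G$ is $\mathbf{G}_a$, so by naturality of the Mayer--Vietoris boundary the pushforward of a class glued purely along reduced $Z$ would vanish. The resolution, which must be engineered carefully, is to take $Z$ non-reduced: on the thickening the degeneration character $\chi_i = 1+t_i x$ is first-order nontrivial even though it is trivial on $Z_{\mathrm{red}}$, and this first-order part is exactly what lets the two flags still differ over $Z_{\mathrm{red}}$ while $(\chi_i|_Z)_*$ sends the additive gluing datum into $\operatorname{Pic}^0(Z)$, producing the non-torsion $a$. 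Making this precise — choosing the thickening and the gluing cocycle so that $\chi_{1*}[\mathcal X]$ and $\chi_{2*}[\mathcal X]$ agree and are non-torsion, and verifying that both weight embeddings remain closed immersions over the non-reduced $Z$ — is where the bulk of the work lies.
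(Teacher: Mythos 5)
Your general reduction (for commutative $G$, torsionness of $(d_n\circ\varphi)_*\mathcal X$ depends only on the subgroup of $X^*(G)$ generated by the diagonal characters of the flag) is sound, and the overall shape of your plan --- exhibit a character with non-torsion pushforward that one flag sees and another misses --- is exactly right. But your specific construction has a genuine gap, and it is the one you half-identify yourself: the non-reduced fix is incompatible with the two-embedding structure. If $Z$ is reduced, so $t_1|_Z = t_2|_Z = 0$, then indeed $X^*(G) = \mathbf Z\chi_1\oplus\mathbf Z\chi_2$ and both embeddings glue, but, as you note, the pushforwards of the glued class along $\chi_1\pm\chi_2$ die. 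If instead $Z$ is a thickening with $t_1|_Z = t_2|_Z = \epsilon \neq 0$ nilpotent, then the character lattice collapses: $X^*(\mathcal G_i) = \mathbf Z\cdot(1+t_ix)$, and a character of the glued $G$ is a pair $\bigl((1+t_1x)^m, (1+t_2x)^n\bigr)$ agreeing on $Z$, which forces $(m-n)\epsilon = 0$, hence $m=n$ in characteristic $0$; so $X^*(G)\cong\mathbf Z$ and $\chi_1-\chi_2$ does not exist. Concretely, your weight $-1$ matrix restricts over $Z$ to
$$
\begin{pmatrix} (1+\epsilon x)^{-1} & x(1+\epsilon x)^{-1} \\ 0 & 1 \end{pmatrix},
$$
whose top-left entry $1-\epsilon x + \cdots$ does \emph{not} agree with the entry $1+\epsilon x$ of the first embedding; they agree only over $Z_{\mathrm{red}}$, so $\psi$ fails to glue. (The asymmetric choice $t_1|_Z=\epsilon$, $t_2|_Z=0$ collapses the lattice to $\mathbf Z\chi_2$ for the same reason.) With a rank-one lattice the two diagonal subgroups are $m\mathbf Z$ and $n\mathbf Z$, and torsionness of $\chi^m_*[\mathcal X]$ is independent of $m\neq 0$, so no example can come out. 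On top of this, the non-torsion computation via Mayer--Vietoris and Grothendieck-style Brauer classes --- which you explicitly defer --- is never carried out, so even the one-character part of the claim is unproven.

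The paper's construction sidesteps both problems by working over a single non-reduced base with $G$ already additive: $S = S_0[\epsilon]$ for $S_0$ a $\mathbf Q$-variety with $H^2(S_0,\mathcal O_{S_0})\neq 0$, and $G = \mathbf G_{a,S}$. One embedding is the standard unipotent $f\mapsto\bigl(\begin{smallmatrix}1&f\\0&1\end{smallmatrix}\bigr)$ into $B_2$, with trivial diagonal, so its pushout gerbe is trivial, hence torsion; the other is into $B_3$ with diagonal $(1,1,1+\epsilon f)$, smuggling in the infinitesimal character $f\mapsto 1+\epsilon f$ --- the same first-order phenomenon you were trying to engineer with a two-component degeneration, but realized on $\mathbf G_a$ itself. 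The non-torsion verification is then nearly free: $f\mapsto 1+\epsilon f$ factors through $i_*\mathbf G_{a,S_0}$, the image of $[\mathcal X]$ lands in $H^2(S_0,\mathcal O_{S_0})$, a $\mathbf Q$-vector space, so nonzero implies non-torsion; and injectivity of $H^2(S_0,\mathcal O_{S_0})\to H^2(S_\et,\mathbf G_m)$ follows from the short exact sequence $1\to i_*\mathbf G_{a,S_0}\to\mathbf G_m\to i_*\mathbf G_{m,S_0}\to 1$ together with surjectivity of $\operatorname{Pic}(S)\to\operatorname{Pic}(S_0)$ (the inclusion $i$ has a retraction). No gluing, no Mayer--Vietoris, no non-torsion Brauer classes needed. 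If you want to repair your approach, the lesson is that the ``invisible versus visible'' character should be the trivial character versus $1+\epsilon f$ on $\mathbf G_a$ over a thickening, with the two flags living in matrix groups of different sizes rather than on different components of the base.
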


\begin{proof}
    Let $S_0$ be any variety over the rational numbers with $H^2(S_0, \mathcal{O}_{S_0}) \neq 0$. Set $S = S[\epsilon] = S \otimes _{\mathbf{Q}} \mathbf{Q}[\epsilon]$. Let $G= \mathbf{G}_{a, S}$. Let $\mathcal{X}/S$ be any gerbe banded by $G$ such that the gerbe 
    $$
    \mathcal{X} \times _S S_0 \to S_0
    $$
    is not trivial; for example, the pullback of a non-trivial gerbe $\mathcal{X}_0 \to S_0$ banded by $\mathbf{G}_a$ along $S \to S_0$. Let $\varphi : G \to B_2$ be the usual embedding 
    $$
    f \mapsto
    \begin{pmatrix}
        1 & f \\
        0 & 1
    \end{pmatrix}
    $$
    and let $\psi : G \to B_3$ be the embedding
    $$
    f \mapsto
    \begin{pmatrix}
        1 & f & 0 \\
        0 & 1 & 0 \\
        0 & 0 & 1 + \epsilon f
    \end{pmatrix}.
    $$
    Then the composition $d_2 \circ \varphi : G \to \mathbf{G}_m^2$ is trivial so $(d_2 \circ \varphi)_* \mathcal{X}$ is certainly a torsion $\mathbf{G}_m^2$-gerbe -- it is trivial. On the other hand, we claim the gerbe $(d_3 \circ \psi)_*\mathcal{X}$ is not torsion. It suffices to show the pushout of $\mathcal{X}$ along the homomorphism $G \to \mathbf{G}_m$ given by $f \mapsto 1+\epsilon f$ is not torsion. %Since $\mathbf{G}_a$ and $\mathbf{G}_m$ are smooth, we have $H^2(S, \mathbf{G}_a) = H^2(S_{\acute{e}t}, \mathbf{G}_a)$ and similarly for $\mathbf{G}_m$.
    The homomorphism of sheaves $\mathbf{G}_a \to \mathbf{G}_m$ on $S_{\acute{e}t}$ given by $f \mapsto 1 + \epsilon f$ factors as
    $$
    \mathbf{G}_a \to i_*\mathbf{G}_{a, S_0} \to \mathbf{G}_m
    $$
    where $i : S_0 \to S$ is the inclusion. By assumption, the image of $[\mathcal{X}] \in H^2(S_{\acute{e}t}, \mathbf{G}_a)$ in $H^2(S_{\acute{e}t}, i_*\mathbf{G}_{a, S_0}) = H^2((S_0)_{\acute{e}t}, \mathbf{G}_a)$ is not zero, and therefore not torsion as $H^2((S_0)_{\acute{e}t}, \mathbf{G}_a) = H^2(S_0, \mathcal{O}_{S_0})$ is a $\mathbf{Q}$-vector space. We are therefore done if we can show:
    
    Claim: $H^2(S_0, \mathcal{O}_{S_0}) = H^2(S_{\acute{e}t}, i_*\mathbf{G}_{a, S_0}) \to H^2(S_{\acute{e}t}, \mathbf{G}_m)$ is injective.

    This comes from the long exact sequence of cohomology associated to the short exact sequence of sheaves
    $$
    1 \to i_*\mathbf{G}_{a, S_0} \xrightarrow{f \mapsto 1 + \epsilon f} \mathbf{G}_{m} \to i_*\mathbf{G}_{m, S_0} \to 1
    $$
    on $S_{\acute{e}t}$. The long exact sequence of cohomology gives 
    $$
    \cdots \to \operatorname{Pic}(S) \to \operatorname{Pic}(S_0) \to H^2(S_0, \mathcal{O}_{S_0}) \to H^2(S_{\acute{e}t}, \mathbf{G}_m) \to \cdots 
    $$
    but $\operatorname{Pic}(S) \to \operatorname{Pic}(S_0)$ is surjective as $i : S_0 \to S$ admits a retraction. By exactness, the claim follows. 
\end{proof}

In many interesting cases, the above problem does not occur.

\begin{lemma}
\label{lemma-independentofembedding}
    Let $S$ be a quasi-compact scheme. Let $G/S$ be a finitely presented, flat, trigonalizable group scheme. 
    Let $\mathcal{X} \to S$ be a gerbe banded by $G$. 
   If $G \to S$ is finite locally free or $S$ is regular, then $\mathcal{X}$ is torsion. In fact, for \emph{any} faithful $G$-equivariant vector bundle $\mathcal{E}$ with complete $G$-stable flag $\mathcal{E}^\bullet$ and associated homomorphism
   $$
   \Psi^{\mathcal{E}^\bullet} : G \to \mathbf{G}_{m, S}^n \cong \prod \operatorname{Aut} (\mathcal{E}^i/\mathcal{E}^{i+1}),
   $$
   the class of the pushout gerbe $\Psi^{\mathcal{E}^\bullet}_*\mathcal{X}$ in $H^2(S_{\acute{e}t}, \mathbf{G}_m^n)$ is torsion.
   \end{lemma}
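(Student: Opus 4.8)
\emph{The plan.} The pushout along $\Psi^{\mathcal{E}^\bullet}=(\psi_1,\dots,\psi_n)$, where $\psi_i\colon G\to\operatorname{Aut}(\mathcal{E}^i/\mathcal{E}^{i+1})\cong\mathbf{G}_{m,S}$ is the $i$-th diagonal character as in (\ref{equn-maptotorus}), has class in $H^2(S_\et,\mathbf{G}_m^n)=\prod_i H^2(S_\et,\mathbf{G}_m)$ whose $i$-th component is the class of the pushout $\psi_{i,*}\mathcal{X}$ along $\psi_i=\operatorname{pr}_i\circ\Psi^{\mathcal{E}^\bullet}$. So it suffices to show, for an \emph{arbitrary} character $\psi\colon G\to\mathbf{G}_m$, that $\psi_*\mathcal{X}$ is torsion in $H^2(S_\et,\mathbf{G}_m)$; since this makes no reference to the flag beyond extracting the $\psi_i$, the conclusion will hold for every flag simultaneously. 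I would then treat the two hypotheses separately.

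\emph{The regular case.} Here I would use nothing about $\psi$: I claim $H^2(S_\et,\mathbf{G}_m)$ is already torsion. A regular scheme is locally Noetherian, so the quasi-compact $S$ is Noetherian and has finitely many connected components, each regular and connected, hence integral. Writing $S=\coprod_j S_j$ and $H^2(S_\et,\mathbf{G}_m)=\bigoplus_j H^2((S_j)_\et,\mathbf{G}_m)$, it is enough to treat an integral regular scheme with function field $K$: there $H^2((S_j)_\et,\mathbf{G}_m)$ injects into $\operatorname{Br}(K)=H^2(K_\et,\mathbf{G}_m)$ by Grothendieck's theorem (see \cite{stacks-project}), and the latter is torsion, being the degree-$2$ Galois cohomology of a field. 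Hence $\psi_*\mathcal{X}$, and thus $\Psi^{\mathcal{E}^\bullet}_*\mathcal{X}$, is torsion.

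\emph{The finite locally free case.} Now $\psi$ matters. The key claim is that \emph{every} character $\psi\colon G\to\mathbf{G}_m$ satisfies $\psi^{N}=1$, where $N=\operatorname{ord}(G)$ is the rank of $\mathcal{O}_G$ (locally constant, hence bounded since $S$ is quasi-compact). Granting this, $\psi$ factors through $\mu_{N}\hookrightarrow\mathbf{G}_m$, so $\psi_*\mathcal{X}$ is the image of a class in $H^2(S_\et,\mu_{N})$, a group killed by $N$; thus $\psi_*\mathcal{X}$ is $N$-torsion and we conclude as above. To prove the claim I would use the transfer (norm) on Picard groups attached to the universal torsor $\pi\colon S\to BG_S$, which is finite locally free of degree $N$ because $S\times_{BG_S}S\cong G$. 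Let $L_\psi\in\operatorname{Pic}(BG_S)$ be the character line bundle, so $L_\psi^{\otimes m}=L_{\psi^{m}}$ and $\pi^*L_\psi\cong\mathcal{O}_S$. Then $L_\psi^{\otimes N}=\operatorname{Nm}_\pi(\pi^*L_\psi)=\operatorname{Nm}_\pi(\mathcal{O}_S)=\mathcal{O}_{BG_S}$, and since the homomorphism $\operatorname{Hom}_{S\text{-}\mathrm{gp}}(G,\mathbf{G}_m)\to\operatorname{Pic}(BG_S)$, $\chi\mapsto L_\chi$, is injective (a nowhere-vanishing $\chi$-semi-invariant global function forces $\chi=1$, as $G$ acts trivially on $\mathcal{O}_S$), we get $\psi^N=1$.

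\emph{Main obstacle.} The only genuine content is the identity $\psi^N=1$ in the finite locally free case; the regular case is a citation and the reduction to single characters is formal. That this identity truly needs finiteness, not merely fiberwise triviality, is illustrated by the preceding lemma, where a character of the non-finite group $\mathbf{G}_a$ over a non-reduced base has infinite order while restricting trivially to every reduced fiber. The norm argument succeeds precisely because finiteness of $\pi$ supplies the transfer relation $\operatorname{Nm}_\pi\circ\pi^*=(-)^{\otimes N}$. The care needed is to confirm that the norm on $\operatorname{Pic}$ and this transfer identity hold for the finite locally free morphism of stacks $\pi\colon S\to BG_S$ (standard), and that quasi-compactness bounds $N$ so a single torsion order works across all of $S$.
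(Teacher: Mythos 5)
Your proposal is correct, and its skeleton matches the paper's proof: in the regular case both arguments come down to Grothendieck's result (Le groupe de Brauer II, Proposition 1.4) that \'etale $H^2$ with $\mathbf{G}_m$-coefficients on a regular scheme is torsion --- the paper cites this in one line, while you spell out the (correct) reduction to integral components and the injection into $\operatorname{Br}(K)$ --- and in the finite locally free case both conclude by factoring $\Psi^{\mathcal{E}^\bullet}$ through $\mu_{N,S}^n \subset \mathbf{G}_{m,S}^n$, so that the pushout class is killed by $N$. The genuine difference is how that factorization is justified. The paper asserts that there is an $N>0$ for which $g \mapsto g^N$ is the trivial map $G \to G$; for a possibly non-commutative finite locally free group scheme this is a Deligne-type statement (annihilation by the order is Deligne's theorem in the commutative case and is open in general), so the paper's one-liner implicitly leans on commutativity consequences of trigonalizability. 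You instead prove exactly the weaker statement that is actually used --- every character $\psi\colon G \to \mathbf{G}_m$ satisfies $\psi^N = 1$ --- via the norm along the finite locally free universal torsor $\pi\colon S \to BG_S$: from $\pi^*L_\psi \cong \mathcal{O}_S$ you get $L_\psi^{\otimes N} = \operatorname{Nm}_\pi(\pi^*L_\psi) = \mathcal{O}_{BG_S}$, and injectivity of $\chi \mapsto L_\chi$ (your semi-invariant unit argument is right) gives $\psi^N=1$; equivalently one can apply $\det$ to the projection-formula isomorphism $\pi_*\mathcal{O}_S \cong L_\psi \otimes \pi_*\mathcal{O}_S$. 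This buys a self-contained argument valid for arbitrary finite locally free $G$, with no trigonalizability input at this step. The two points you flag as needing care are indeed the only ones: the norm and the transfer identity $\operatorname{Nm}_\pi\circ\pi^* = (-)^{\otimes N}$ do hold for this representable finite locally free morphism of stacks (define $\operatorname{Nm}_\pi(L) = \det(\pi_*L)\otimes\det(\pi_*\mathcal{O}_S)^{-1}$, checking local freeness of $\pi_*$ fppf-locally on $BG_S$, where $\pi$ becomes $G \to S$), and when the rank of $\mathcal{O}_G$ is non-constant one takes $N$ to be a common multiple over the finitely many clopen strata of the quasi-compact $S$, exactly as you say. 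Your initial componentwise reduction $H^2(S_{\text{\'et}},\mathbf{G}_m^n) = \prod_i H^2(S_{\text{\'et}},\mathbf{G}_m)$, with $i$-th component the class of $\psi_{i,*}\mathcal{X}$, is the standard functoriality of Giraud's pushforward and is unproblematic.
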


\begin{proof}
    If $G\to S$ is finite locally free, then there is an integer $N > 0$ such that the morphism $G \xrightarrow{g\mapsto g^N}G$ is zero. Then the morphism $\Psi^{\mathcal{E}^\bullet}$ factors through $\mu_{N, S}^n \subset \mathbf{G}_{m, S}^n$ so the class of $\Psi^{\mathcal{E}^\bullet}_*\mathcal{X}$ is annihilated by $N$. 

    If $S$ is regular, then $H^2(S_{\acute{e}t}, \mathbf{G}_m^n)$ is torsion by \cite[Proposition 1.4]{GroupedeBrauerII}.
\end{proof}

Finally, it is natural to ask whether in Theorem \ref{thm-faithfulbundleexists} we can replace the assumption that $G \to S$ becomes trigonalizable after a finite locally free covering by the weaker assumption that it becomes trigonalizable after an fppf covering $S' \to S$. Indeed, when $G \to S$ is unipotent in the sense of \cite{bragg2025unipotentmorphisms}, %\cite{unipotentsubgroups} 
then this is possible by Theorem 5.5 of loc. cit. However, the following example shows that this is not possible in general when $G / S$ is a torus.

\begin{example}
    Let $S$ be a quasi-compact scheme. Let $G \to S$ be a torus. Then by \cite[Expos\'e XI, Remarque 4.6]{SGA3_II} the stack $BG_S$ admits a faithful vector bundle if and only if $G \to S$ is iso-trivial, i.e., there exists a finite \'etale covering $S' \to S$ such that $G_{S'} \cong \mathbf{G}_{m, S'}^n$. When $S$ is a nodal curve, there exists a torus $T \to S$ of relative dimension 2 which is not isotrivial \cite[Expos\'e X, 1.6]{SGA3_II} and therefore $BT_S$ does not satisfy the resolution property. However, $T \to S$ becomes trigonalizable after an \'etale covering, but not after a finite \'etale covering.
\end{example}

\section{Examples of groups satisfying the hypotheses}\label{S:section4}

Many group schemes satisfy the hypotheses of Theorem \ref{thm-faithfulbundleexists}. The following well-known result describes the trigonalizable group schemes over a field. See for example {\cite[Chapter 16, Section a]{MilneAlgGroups}}.

\begin{lemma}
\label{lemma-trigoverfield}
    Let $k$ be a field. Let $G$ be an affine group scheme of finite type over $k$. The following are equivalent:
    \begin{enumerate}
        \item $G$ is trigonalizable over $k$.
        \item $G$ is an extension of a diagonalizable group scheme by a unipotent group scheme, both of finite type over $k$. 
        \item Every finite-dimensional representation of $G$ admits a complete flag of $G$-stable subspaces. 
    \end{enumerate}
\end{lemma}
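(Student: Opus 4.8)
The plan is to prove the cyclic chain of implications $(3)\Rightarrow(1)\Rightarrow(2)\Rightarrow(3)$, with the bulk of the work being a reduction of everything to understanding the simple representations of $G$.

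The implication $(3)\Rightarrow(1)$ is immediate: since $G$ is affine of finite type over $k$, it admits a faithful finite-dimensional representation $V$ (it embeds as a closed subgroup of some $GL_n$), and applying $(3)$ to $V$ produces a complete $G$-stable flag, so that $V$ is a faithful trigonalizing representation. For $(1)\Rightarrow(2)$ I would fix a faithful representation $V$ together with a complete $G$-stable flag, which identifies $G$ with a closed subgroup scheme of the upper-triangular group $B_n\subset GL_n$. Composing with the diagonal quotient $d_n:B_n\to\mathbf{G}_m^n$ and restricting to $G$ gives a homomorphism $G\to\mathbf{G}_m^n$; let $U:=G\cap U_n$ be its kernel, where $U_n\subset B_n$ is the strictly upper-triangular subgroup. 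Then $U$ is a closed subgroup scheme of the unipotent group $U_n$, hence unipotent, and it is normal in $G$. The quotient $G/U$ is isomorphic to the image of $G$ in $\mathbf{G}_m^n$ (using that images of homomorphisms of affine algebraic groups over a field are closed and that $G/\ker\to\operatorname{im}$ is an isomorphism), hence is a closed subgroup scheme of a torus and is therefore diagonalizable. This exhibits $G$ as an extension of the diagonalizable group $G/U$ by the unipotent group $U$, both of finite type over $k$.

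For $(2)\Rightarrow(3)$ I would argue via simple representations. Write $1\to U\to G\to D\to 1$ with $U$ unipotent and $D$ diagonalizable. First I claim every simple representation $V$ of $G$ is one-dimensional: since $U$ is unipotent, the fixed subspace $V^U$ is nonzero, and since $U$ is normal in $G$ the subspace $V^U$ is $G$-stable (for $g\in G(R)$, $u\in U(R)$, and $v\in V^U$ one has $u\cdot(gv)=g\cdot(g^{-1}ug)v=gv$ because $g^{-1}ug\in U$); by simplicity $V^U=V$, so the $G$-action factors through $D$. As $D$ is diagonalizable, every representation of $D$ is a direct sum of characters, so the simple representation $V$ must be one-dimensional. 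Finally, an arbitrary finite-dimensional representation $W$ of $G$ admits a composition series whose successive quotients are simple, hence one-dimensional by the claim; this composition series is then a complete $G$-stable flag, giving $(3)$.

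The main obstacle is supplying the two structural inputs used in the last step in the required generality, namely that a unipotent group scheme has a nonzero fixed vector in every nonzero finite-dimensional representation, and that every representation of a diagonalizable group scheme decomposes as a direct sum of characters. Both are standard, but some care is needed because $G$ is allowed to be non-smooth (e.g.\ $\mu_p$ or $\alpha_p$ in characteristic $p$), so I would invoke these in the scheme-theoretic form valid for arbitrary affine group schemes of finite type over a field rather than the classical statements for smooth linear algebraic groups. One should also check the standard facts that a closed subgroup scheme of a diagonalizable (resp.\ unipotent) group scheme is again diagonalizable (resp.\ unipotent) and that quotients by closed normal subgroup schemes exist as affine group schemes of finite type, all of which hold over a field without smoothness hypotheses.
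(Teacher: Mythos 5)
Your proof is correct. The paper offers no argument of its own for this lemma, deferring entirely to \cite[Chapter 16, Section a]{MilneAlgGroups}, and your cyclic chain $(3)\Rightarrow(1)\Rightarrow(2)\Rightarrow(3)$ --- producing a faithful trigonalizing representation, cutting out the unipotent kernel $G\cap U_n$ of $d_n|_G$ with diagonalizable image, and reducing $(2)\Rightarrow(3)$ to the one-dimensionality of simple representations via the $G$-stability of $V^U$ --- is essentially the standard argument found in that reference, with the scheme-theoretic versions of the inputs (fixed vectors for unipotent group schemes, complete reducibility for diagonalizable ones, closure of these classes under closed subgroups and quotients) correctly identified as the points needing care for non-smooth $G$.
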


\qed 

Many group schemes over a field become trigonalizable %become trigonalizable 
after passage to the algebraic closure.

\begin{lemma}
\label{lemma-liekolchin}
    Let $k$ be a field with algebraic closure $\bar{k}$. Let $G$ be an affine group scheme of finite type over $k$. If either: (a) $G$ is commutative, or (b) $G$ is smooth, solvable, and connected, then $G_{\bar{k}}$ is a trigonalizable group scheme over $\bar{k}$.
\end{lemma}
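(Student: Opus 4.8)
The plan is to dispatch the two cases separately, reducing each to one of the equivalent criteria for trigonalizability furnished by Lemma~\ref{lemma-trigoverfield} over the algebraically closed field $\bar k$.

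For case (b) I would appeal to the classical Lie--Kolchin theorem. First I would record that $G_{\bar k}$ is again smooth, solvable, and connected: smoothness and solvability are preserved by base field extension, and connectedness passes to $G_{\bar k}$ because a connected group scheme over a field equals its identity component, which is geometrically connected. Then, given any nonzero finite-dimensional representation $V$ of $G_{\bar k}$, Lie--Kolchin provides a common eigenvector, i.e. a $G_{\bar k}$-stable line $\ell \subset V$; inducting on $\dim V$ via the quotient $V/\ell$ produces a complete $G_{\bar k}$-stable flag. This is precisely condition (3) of Lemma~\ref{lemma-trigoverfield}, so $G_{\bar k}$ is trigonalizable.

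For case (a) I would instead verify condition (2) of Lemma~\ref{lemma-trigoverfield}, using the structure theory of commutative affine group schemes over a perfect field. Since $\bar k$ is perfect and $G_{\bar k}$ is commutative, affine, and of finite type, the decomposition theorem (see \cite{DemazureGabriel} or \cite{MilneAlgGroups}) writes $G_{\bar k}$ canonically as a product $M \times U$ of a subgroup $M$ of multiplicative type and a unipotent subgroup $U$. Over the algebraically closed field $\bar k$ every group of multiplicative type is diagonalizable, so the projection onto $M$ exhibits $G_{\bar k}$ as an extension
\[
1 \to U \to G_{\bar k} \to M \to 1
\]
of a diagonalizable group by a unipotent group, which by Lemma~\ref{lemma-trigoverfield} is equivalent to trigonalizability.

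Both inputs are classical, so I do not expect a genuine obstacle; the points demanding care are the base-change claims in (b)---in particular the geometric connectedness of the identity component---and, in (a), the need to invoke the multiplicative/unipotent decomposition for possibly non-smooth, non-reduced group schemes. The latter is the step I would scrutinize most: over a perfect field the product decomposition persists beyond the smooth case, as one sees for finite commutative group schemes from the decomposition into the four types determined by whether the group and its Cartier dual are \'etale or infinitesimal, which sorts the multiplicative factors into $M$ and the unipotent factors into $U$. Hence condition (2) genuinely applies to every commutative $G$.
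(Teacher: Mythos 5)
Your proof is correct and takes essentially the same route as the paper: case (a) via the canonical decomposition of a commutative affine group scheme over a perfect field into a product of a multiplicative-type (hence, over $\bar{k}$, diagonalizable) group and a unipotent group, combined with criterion (2) of Lemma \ref{lemma-trigoverfield} (the paper cites SGA3, Expos\'e XVII, 7.2.1 for the decomposition), and case (b) via the Lie--Kolchin theorem, exactly as in the paper. The only cosmetic difference is that you unwind Lie--Kolchin into the eigenvector-and-induction argument verifying criterion (3), where the paper simply cites Milne's Theorem 16.30 for trigonalizability directly.
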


\begin{proof}
If $G$ is commutative, then $G_{\bar{k}}$ is a product of a unipotent group scheme and a diagonalizable group scheme \cite[Expos\'e XVII, 7.2.1]{SGA3_II},  so by Lemma \ref{lemma-trigoverfield} we see that $G_{\bar{k}}$ is trigonalizable. 

If $G$ is smooth, solvable, and connected then $G_{\bar{k}}$ is trigonalizable by the Lie--Kolchin Theorem, see \cite[Theorem 16.30]{MilneAlgGroups}.
\end{proof}

Over a Dedekind scheme, a flat finite type affine group scheme is trigonalizable if and only if its generic fiber is.

\begin{prop}
\label{prop-genericallytrig}
    Assume $S$ is a regular  integral scheme of dimension 1 with function field $K$. Assume $G \to S$ is a flat, relatively affine group scheme of finite type. Assume the $K$-group scheme $G_K$ is trigonalizable. Then so is the $S$-group scheme $G$.
\end{prop}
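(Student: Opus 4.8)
The plan is to build, directly over $S$, a faithful $G$-equivariant vector bundle carrying a complete $G$-stable flag with line-bundle quotients, which is precisely what trigonalizability demands. Since $G\to S$ is relatively affine, flat, and of finite type, the regular representation $\mathcal{A} = p_*\mathcal{O}_G$ (with $p : G \to S$) is a quasi-coherent sheaf of $\mathcal{O}_S$-Hopf algebras, flat over $\mathcal{O}_S$, and is the filtered union of its finite-type $G$-stable subsheaves. Because $S$ is a regular integral scheme of dimension $1$, every such finite-type subsheaf is torsion-free, hence a vector bundle; choosing one that generates $\mathcal{A}$ as an $\mathcal{O}_S$-algebra produces a faithful $G$-equivariant vector bundle $\mathcal{W}$ on $S$, i.e.\ a closed immersion $G \hookrightarrow \operatorname{GL}(\mathcal{W})$.

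First I would pass to the generic fibre. The bundle $\mathcal{W}_K$ is a faithful representation of the trigonalizable group $G_K$, so by the equivalence (1)$\Leftrightarrow$(3) of Lemma \ref{lemma-trigoverfield} it carries a complete $G_K$-stable flag
\[
0 = W^n \subset W^{n-1} \subset \cdots \subset W^0 = \mathcal{W}_K, \qquad \dim_K W^i/W^{i+1} = 1.
\]
The task is to extend this generic flag to a $G$-stable flag of $\mathcal{W}$ over all of $S$, which I would do by saturation: letting $j : \operatorname{Spec} K \to S$ be the generic point, set $\mathcal{W}^i = \mathcal{W} \cap j_* W^i$, the subsheaf of sections whose generic value lies in $W^i$. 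By construction $\mathcal{W}/\mathcal{W}^i$ injects into $\mathcal{W}_K/W^i$ and is therefore torsion-free, so over the Dedekind scheme $S$ each $\mathcal{W}^i$ is a subbundle with locally free quotient; since the generic fibre of $\mathcal{W}^i/\mathcal{W}^{i+1}$ is the line $W^i/W^{i+1}$, each successive quotient is a line bundle.

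The crux, and the step I expect to be the main obstacle, is verifying that each saturation $\mathcal{W}^i$ is $G$-stable. I would check this locally, writing $S = \operatorname{Spec} R$ with $R$ a discrete valuation ring, $\mathcal{W} = M$, $A = \mathcal{O}(G)$, and coaction $\rho : M \to M \otimes_R A$, so that $M^i = M \cap W^i$ inside $M_K$. The key is that $A$ is flat over $R$ (because $G\to S$ is flat), so $-\otimes_R A$ preserves the kernel defining $M^i$, giving
\[
M^i \otimes_R A = \ker\bigl(M \otimes_R A \to (M_K/W^i)\otimes_K A_K\bigr).
\]
Since $W^i$ is a subcomodule of $M_K$, the generic coaction sends $M^i \subseteq W^i$ into $W^i \otimes_K A_K$, so the composite $M^i \xrightarrow{\rho} M\otimes_R A \to (M_K/W^i)\otimes_K A_K$ vanishes after tensoring with $K$; as the target is a $K$-vector space and hence torsion-free, the composite already vanishes over $R$, whence $\rho(M^i) \subseteq M^i\otimes_R A$. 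This flatness argument is exactly what forces the hypothesis that $G\to S$ be flat (not merely generically so) and is what makes the saturated generic flag genuinely $G$-stable. Granting it, $\mathcal{W}$ together with $\mathcal{W}^\bullet$ is a faithful $G$-equivariant vector bundle with a complete $G$-stable flag whose graded pieces are line bundles, so $G$ is trigonalizable over $S$.
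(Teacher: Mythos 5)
Your proposal is correct and follows essentially the same route as the paper's proof: produce a faithful $G$-equivariant vector bundle over $S$ (the paper cites \cite[VIB.13.2]{SGA3_1}, whereas you sketch its proof via finite-type subcomodules of the regular representation), take a complete $G_K$-stable flag on the generic fibre using Lemma \ref{lemma-trigoverfield}, and extend it by saturation $\mathcal{E}^i = \mathcal{E} \cap \eta_* W^i$. Your flatness argument verifying that the saturations are genuinely $G$-stable is a correct unpacking of a step the paper asserts with a reference to \cite[Proof of Lemma 3.1]{unipotentsubgroups}.
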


\begin{proof}
See \cite[Proof of Lemma 3.1]{unipotentsubgroups}, which is essentially the same. By \cite[VIB.13.2]{SGA3_1}, $G$ admits a faithful vector bundle $\mathcal{E}$. By the assumption that $G_K$ is trigonalizable and Lemma \ref{lemma-trigoverfield}, the $G_K$-representation $\mathcal{E}_K$ admits a complete flag $W^\bullet$ of $G$-stable subspaces. For each $G_K$-stable subspace $W^i \subset \mathcal{E}_K$, the $G$-submodule $\mathcal{E}^i = \mathcal{E} \cap \eta_*W^i \subset \mathcal{E}$ (here $\eta : \operatorname{Spec}(K) \to S$ is the inclusion) is a $G$-equivariant sub-bundle of $\mathcal{E}$ whose stalk at the generic point is $W^i \subset \mathcal{E}_K$. Thus $\mathcal{E}^\bullet$ is a complete flag of $G$-stable sub-bundles and we see that $G$ is trigonalizable. 
\end{proof}

\begin{corollary}
\label{corollary-geometricallytrig}
    Assume $S$ is a regular, Nagata, integral scheme of dimension 1 with function field $K$. Assume $G \to S$ is a flat, relatively affine group scheme of finite type. Let $\bar{K}$ be an algebraic closure of $K$ and assume the $\bar{K}$-group scheme $G_{\bar{K}}$ is trigonalizable. Then there is a finite locally free surjective morphism $T \to S$ such that $G_T$ is trigonalizable over $T$. 
\end{corollary}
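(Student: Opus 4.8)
The plan is to descend the trigonalizability of $G_{\bar K}$ down to a finite subextension $K' / K$, then take $T$ to be the normalization of $S$ in $K'$ and invoke Proposition \ref{prop-genericallytrig}.

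First I would show that $G_{K'}$ is trigonalizable for some finite subextension $K \subseteq K' \subseteq \bar K$. Write $\bar K = \operatorname{colim}_\alpha K_\alpha$ as the filtered colimit of its finite subextensions over $K$, so that $\mathcal{O}(G_{\bar K}) = \mathcal{O}(G) \otimes_K \bar K = \operatorname{colim}_\alpha \mathcal{O}(G) \otimes_K K_\alpha$. The datum witnessing trigonalizability of $G_{\bar K}$---a finite-dimensional faithful representation $\rho : G_{\bar K} \to GL_{n, \bar K}$ together with a complete flag of $G_{\bar K}$-stable subspaces---is given by finitely much data: the representation is a comodule structure map $\bar K^n \to \bar K^n \otimes_{\bar K} \mathcal{O}(G_{\bar K})$ determined by finitely many elements of $\mathcal{O}(G) \otimes_K \bar K$, and each step of the flag is cut out by finitely many linear equations. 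Hence all of it is already defined over some $K' = K_\alpha$. Faithfulness means $\rho$ is a closed immersion, equivalently that $\mathcal{O}(GL_n) \to \mathcal{O}(G)$ is surjective over $K'$; since this surjectivity holds after the faithfully flat base change $K' \to \bar K$, it holds over $K'$ by fppf descent. Over the field $K'$ the sub-objects of the flag are automatically sub-bundles with line-bundle quotients, so by Lemma \ref{lemma-trigoverfield} the group $G_{K'}$ is trigonalizable.

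Next I would let $T$ be the normalization of $S$ in $K'$. Because $S$ is Nagata, $T \to S$ is finite, and $T$ is an integral, normal, Noetherian scheme of dimension $1$ with function field $K'$; being normal of dimension $1$ it is regular. To see that $T \to S$ is finite locally free it suffices to work over a closed point $s \in S$, where $\mathcal{O}_{S, s}$ is a DVR: there the stalk of $T$ is a finitely generated $\mathcal{O}_{S, s}$-module which is torsion-free (it embeds into $K'$), hence free. Surjectivity is automatic, since $T \to S$ is finite and dominant, so its image is closed and contains the generic point. Finally, the generic fiber of $G_T \to T$ is $G \times_S \operatorname{Spec}(K') = G_{K'}$, which is trigonalizable by the previous step, so as $T$ is regular integral of dimension $1$ and $G_T \to T$ is flat, relatively affine, and of finite type, Proposition \ref{prop-genericallytrig} shows $G_T$ is trigonalizable over $T$, as desired.

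The step I expect to be the main obstacle is the descent: one must check carefully that faithfulness---a closed-immersion condition---together with the flag structure is genuinely finitely presented, so that it descends to a \emph{finite} subextension $K'$ rather than only being available over the full algebraic closure. The remaining geometric inputs, namely finiteness of the normalization from the Nagata hypothesis and local freeness from the DVR structure of $\mathcal{O}_{S, s}$, are standard.
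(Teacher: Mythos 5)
Your proposal is correct and follows essentially the same route as the paper's proof: descend the faithful triangular representation (with its flag) from $\bar{K}$ to a finite subextension, take $T$ to be the normalization of $S$ in that extension (finite locally free by the Nagata and regularity hypotheses), and conclude via Proposition \ref{prop-genericallytrig}. The paper compresses the descent to a finite extension and the finite-local-freeness of $T \to S$ into single assertions, whereas you spell out the limit argument, the fppf descent of the closed-immersion condition, and the DVR argument explicitly---all correctly.
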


\begin{proof}
    The hypotheses give a faithful upper triangular representation $G_{\bar{K}} \to GL_{N, \bar{K}}$ and hence a finite extension $L/K$ and a faithful upper triangular representation $G_{L} \to GL_{n, L}$. Let $T$ be the normalization of $S$ in $L$. Then $T \to S$ is finite locally free since $S$ is Nagata and regular of dimension $1$, and $G_T \to T$ satisfies the hypotheses of Proposition \ref{prop-genericallytrig} so we see that $G_T$ is trigonalizable as a group scheme over $T$. 
\end{proof}

Combining Corollary \ref{corollary-geometricallytrig} with the following lemma, whose proof is immediate,  gives many examples of locally trigonalizable group schemes over a general base.

\begin{lemma}
    Let $T \to S$ be a morphism of schemes. Let $G \to S$ be a relatively affine finitely presented flat group scheme. Assume $G$ is trigonalizable (resp. locally trigonalizable). Then so is the base change $G_T$. \qed
\end{lemma}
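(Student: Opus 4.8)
The plan is to pull everything back along $T \to S$ and check that each defining property survives. First I would treat the trigonalizable case. Given a faithful $G$-equivariant vector bundle $\mathcal{E}$ on $S$ together with a $G$-stable complete flag $\mathcal{E}^\bullet$ whose successive quotients are line bundles, I would form the pullbacks $\mathcal{E}_T$ and $(\mathcal{E}^i)_T$ along $T \to S$. Since each inclusion $\mathcal{E}^{i+1} \hookrightarrow \mathcal{E}^i$ has locally free quotient, the corresponding short exact sequences are locally split, and hence remain exact after pullback; thus $(\mathcal{E}^\bullet)_T$ is a filtration of $\mathcal{E}_T$ by $G_T$-stable sub-bundles whose successive quotients $(\mathcal{E}^i/\mathcal{E}^{i+1})_T$ are line bundles on $T$.

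The only point deserving comment is faithfulness. The $G$-equivariant structure on $\mathcal{E}$ is the same data as a homomorphism $\rho : G \to GL(\mathcal{E})$ of $S$-group schemes, and $\mathcal{E}$ being faithful means exactly that $\rho$ is a monomorphism. Its base change $\rho_T : G_T \to GL(\mathcal{E})_T = GL(\mathcal{E}_T)$ is the homomorphism classifying the equivariant structure on $\mathcal{E}_T$, and since monomorphisms are stable under base change, $\rho_T$ is again a monomorphism. Hence $\mathcal{E}_T$ is a faithful $G_T$-equivariant vector bundle, and together with the flag constructed above this exhibits $G_T$ as trigonalizable over $T$.

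For the locally trigonalizable case I would reduce to the preceding paragraph. Choose a finite locally free surjection $S' \to S$ with $G_{S'}$ trigonalizable, and set $T' := T \times_S S'$. The projection $T' \to T$ is finite locally free and surjective, being a base change of $S' \to S$, and there is a canonical identification $G_T \times_T T' \cong (G_{S'})_{T'}$. Applying the trigonalizable case to the morphism $T' \to S'$, the group scheme $(G_{S'})_{T'}$ is trigonalizable over $T'$. Thus $G_T$ becomes trigonalizable after the finite locally free surjection $T' \to T$, so $G_T$ is locally trigonalizable.

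As the statement indicates, there is no genuine obstacle: the entire content is that faithfulness translates into the representation homomorphism being a monomorphism, and that monomorphisms, vector bundles, locally split exact sequences, and finite locally free surjections are all preserved under arbitrary base change. The only place one must pause is to observe that the sub-bundle structure of the flag is preserved, which is precisely why the definition insists that the successive quotients be line bundles, equivalently that the steps of the flag are sub-bundles rather than arbitrary subsheaves.
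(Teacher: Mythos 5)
Your proof is correct and is exactly the routine verification the paper intends: the lemma is stated there without proof (explicitly flagged as ``whose proof is immediate''), and your argument---pulling back the flag, which stays exact because each graded piece is a line bundle so the steps are locally split; preserving faithfulness because the classifying homomorphism $G \to GL(\mathcal{E})$ is a monomorphism and monomorphisms are stable under base change; and reducing the locally trigonalizable case to the trigonalizable one via $T' = T \times_S S'$, whose projection to $T$ is again finite locally free surjective---supplies precisely those omitted details. Nothing needs to change.
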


\begin{example}
Let $A$ be a finite abelian group and $S$ a scheme. Denote $A_S \to S$ the constant group scheme given by $A$. Then $A$ is locally trigonalizable since it is the base change of the constant group scheme $A_{\operatorname{Spec}(\mathbf{Z})} \to \operatorname{Spec}(\mathbf{Z})$, and the latter is locally trigonalizable by Corollary \ref{corollary-geometricallytrig} and Lemma \ref{lemma-liekolchin}.
\end{example}

%We do not currently know an example of a finite locally free commutative group scheme $G \to S$ which does not become trigonalizable after base change along some finite locally free surjective morphism $T \to S$.

Slightly more generally we have the following.

\begin{prop}
\label{prop-etalegpschemesareloctrig}
Let $S$ be a scheme and $G \to S$ a finite \'etale commutative group scheme. Then there exists a finite locally free surjective morphism $T \to S$ such that $G_T \to T$ is trigonalizable. 
\end{prop}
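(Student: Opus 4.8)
The plan is to reduce to the case of a \emph{constant} group scheme, which is handled by the Example above, and then to compose covers. Two observations make this work. First, a finite \'etale surjective morphism is finite locally free surjective, and a composition of finite locally free surjective morphisms is again finite locally free surjective, since finiteness, flatness, surjectivity, and local freeness of the pushforward of the structure sheaf are all preserved under composition. Second, the property we seek is local on $S$ for the clopen topology: if $S = \coprod_i S_i$ is a decomposition into open and closed subschemes and we produce a finite locally free surjective $T_i \to S_i$ with $(G_{S_i})_{T_i}$ trigonalizable for each $i$, then $\coprod_i T_i \to S$ is again finite locally free surjective (these properties are local on the target) and does the job. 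Since $G \to S$ is finite locally free its degree is locally constant, so after passing to the clopen pieces on which it is constant I may assume $G \to S$ has constant degree $d$.

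First I would split $G$. Because $G/S$ is \'etale and separated, the diagonal section of the second projection $G \times_S G \to G$ is open and closed, so $G \times_S G \cong G \sqcup W$ with $W \to G$ finite \'etale of degree $d - 1$. Base changing $G$ along the finite \'etale surjection $G \to S$ and inducting on $d$, I obtain a finite \'etale surjective morphism $S' \to S$ over which $G_{S'}$ is isomorphic, as an $S'$-scheme, to a disjoint union of $d$ copies of $S'$. The group law then equips the fiber $d$-element set with a group structure that varies locally constantly over $S'$; decomposing $S'$ into the (finitely many, for fixed $d$) open and closed pieces on which this structure is a fixed finite abelian group $A$, I obtain on each piece an identification $G_{S'} \cong A_{S'}$ with a constant group scheme.

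Now the Example above, showing that a constant finite abelian group scheme is locally trigonalizable, applies on each such piece: it furnishes a finite locally free surjective morphism $T \to S'$ with $G_T = A_T$ trigonalizable. Assembling these over the clopen decomposition of $S'$ and composing with $S' \to S$ produces, by the two observations of the first paragraph, a finite locally free surjective morphism $T \to S$ with $G_T$ trigonalizable, as required.

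The main obstacle is the splitting step and its bookkeeping over a general, possibly disconnected and non-quasi-compact, base: one must organize the open and closed decompositions of $S$ by degree and of $S'$ by the isomorphism type of the fiber group, and then check that the resulting piecewise covers reassemble into a single finite locally free surjective morphism. The splitting of finite \'etale covers itself is standard, and once $G$ has been made constant there is nothing left to prove beyond invoking the Example.
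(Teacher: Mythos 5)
Your proposal is correct and takes essentially the same route as the paper's proof: pass to a clopen decomposition of $S$, use a finite \'etale surjective cover $S' \to S$ to make $G_{S'}$ a constant abelian group scheme, and then invoke the Example on constant groups (itself resting on Corollary \ref{corollary-geometricallytrig} over $\operatorname{Spec}(\mathbf{Z})$). The only difference is that you spell out the standard degree-induction splitting argument and the reassembly over possibly infinite clopen decompositions, details the paper treats as standard and leaves implicit.
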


\begin{proof}
    We may assume all the geometric fibers $G_{\bar{s}}$ are isomorphic as abstract abelian groups (in general $S$ is a finite disjoint union of open and closed subsets over which this is true). Then there is a finite \'etale surjective morphism $S' \to S$ such that the group scheme $G_{S'} \to S'$ is a constant abelian group. Then by the preceding discussion, $S'$ has a finite locally free surjective cover $T \to S'$ such that $G_T$ is trigonalizable over $T$. 
\end{proof}

This proves a result we promised in the introduction.

\begin{corollary}
\label{corollary-finiteetalecommutative}
    Let $S$ be a scheme with an ample line bundle. Let $G \to S$ be a finite \'etale commutative group scheme. Let $\mathcal{X} \to S$ be a gerbe banded by $G$. Then $\mathcal{X}$ admits a faithful vector bundle, is a quotient stack, and satisfies the resolution property.
\end{corollary}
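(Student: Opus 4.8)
The plan is to assemble Corollary \ref{corollary-finiteetalecommutative} from the machinery already developed, treating it as an immediate consequence of Theorem \ref{thm-faithfulbundleexists} once the hypotheses of that theorem are verified for $G$. The theorem requires three things: that $G \to S$ be finitely presented and flat locally trigonalizable, that the gerbe $\mathcal{X}$ be torsion, and (for the resolution property conclusion) that $G \to S$ be finite locally free. A finite \'etale commutative group scheme is certainly finitely presented, flat, and finite locally free, so two of these are automatic; the real content is local trigonalizability and the torsion condition.

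First I would invoke Proposition \ref{prop-etalegpschemesareloctrig}, which provides a finite locally free surjective morphism $T \to S$ such that $G_T \to T$ is trigonalizable. By definition of locally trigonalizable, this shows $G \to S$ is locally trigonalizable, so the hypotheses of Theorem \ref{thm-faithfulbundleexists} concerning the group scheme are satisfied. Next I would verify that $\mathcal{X}$ is torsion. Here the cleanest route is the Remark following the definition of torsion (or equivalently the finite locally free case of Lemma \ref{lemma-independentofembedding}): since $G \to S$ is finite locally free, there is an integer $N > 0$ annihilating $G$ under the $N$-th power map, so any associated homomorphism $\Psi^{\mathcal{E}^\bullet}$ factors through $\mu_{N}^n$, forcing the pushout gerbe class to be $N$-torsion. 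Thus $\mathcal{X}$ is torsion regardless of the choice of flag.

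With all hypotheses of Theorem \ref{thm-faithfulbundleexists} in place, that theorem immediately yields that $\mathcal{X}$ admits a faithful vector bundle and is a quotient stack, and moreover---since $G \to S$ is finite locally free---that $\mathcal{X}$ has the resolution property. This is precisely the assertion of the corollary.

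I do not anticipate a serious obstacle here, as this corollary is explicitly flagged in the text as the result promised in the introduction and is designed to fall out of the preceding development. The only point requiring mild care is confirming the torsion condition, and even this is subsumed by the general Remark that finite locally free locally trigonalizable group schemes always give torsion gerbes. In short, the proof is a matter of citing Proposition \ref{prop-etalegpschemesareloctrig} for local trigonalizability and Theorem \ref{thm-faithfulbundleexists} for the conclusion, with the torsion hypothesis handled by the finite-order argument.
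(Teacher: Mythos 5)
Your proposal is correct and matches the paper's own proof essentially verbatim: Proposition \ref{prop-etalegpschemesareloctrig} for local trigonalizability, the finite locally free case of Lemma \ref{lemma-independentofembedding} (the $N$-th power/$\mu_N^n$ factorization argument) for the torsion condition, and Theorem \ref{thm-faithfulbundleexists} for the conclusion. The only cosmetic difference is that the paper additionally cites Theorem \ref{thm-finiteflatcover}, which is how the resolution property statement in Theorem \ref{thm-faithfulbundleexists} is itself deduced, so your citation of the latter alone suffices.
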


\begin{proof}
    The group scheme $G/S$ is locally trigonalizable by Proposition \ref{prop-etalegpschemesareloctrig}, and the gerbe $\mathcal{X} \to S$ is torsion by Lemma \ref{lemma-independentofembedding}. Therefore, the conclusion follows from Theorem \ref{thm-faithfulbundleexists} and Theorem \ref{thm-finiteflatcover}.
\end{proof}

\section{Smooth DM stacks with quasi-projective coarse space}\label{S:section5}

In this section, we prove Theorem \ref{thm-smoothDM}. The proof is exactly the same as \cite[Proof of Theorem 2.2]{kreschvistoli} except that it uses Corollary \ref{corollary-finiteetalecommutative} instead of Gabber's Theorem \cite{dejong-gabber}. 

\begin{lemma}
\label{lemma-etalegerbe}
Let $S$ be a scheme with an ample line bundle. Let $\mathcal{X} \to S$ be a separated \'etale gerbe. Then $\mathcal{X}$ admits a faithful vector bundle, is a quotient stack, and satisfies the resolution property.
\end{lemma}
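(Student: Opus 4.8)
The plan is to reduce the statement, via the formal machinery already assembled, to the case of a gerbe banded by a finite \'etale \emph{commutative} group scheme, which is settled by Corollary~\ref{corollary-finiteetalecommutative}. Since $\mathcal{X}\to S$ is separated and \'etale it is a quasi-compact Deligne--Mumford stack with finite inertia, and combining properness of the diagonal (from separatedness) with unramifiedness (from the \'etale hypothesis) shows that $\mathcal{X}$ has finite diagonal over $\operatorname{Spec}(\mathbf{Z})$. Its band is a finite \'etale group scheme $G$, and the coarse moduli space of the gerbe $\mathcal{X}\to S$ is $S$ itself, which carries an ample line bundle. Hence Theorem~\ref{thm-finiteflatcover} applies and the three asserted conclusions are equivalent; by Lemma~\ref{lemma-faithfulbundlemeaning} it suffices to construct a single faithful vector bundle on $\mathcal{X}$.

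The heart of the matter is to pass from the possibly non-commutative band $G$ to its center $Z:=Z(G)$, a finite \'etale commutative group scheme. The neutral gerbe $BG_S$ exists, and by Giraud's theory the banded $G$-gerbes over $S$ form a torsor under $H^2(S_\et,Z)$; thus $\mathcal{X}$ is the twist of $BG_S$ by a \emph{commutative} $Z$-gerbe $\mathcal{Z}\to S$ representing a class $\alpha\in H^2(S_\et,Z)$. The regular representation furnishes a faithful $G$-equivariant bundle $R$ on $S$, hence a faithful vector bundle on $BG_S$. After passing to a finite \'etale cover $S'\to S$ that trivializes $\mathcal{X}$ and splits the characters of $Z$, I would decompose $R=\bigoplus_{\chi\in\widehat{Z}}R_\chi$ into central-character isotypic pieces; every $\chi\in\widehat{Z}$ occurs, since $R|_Z$ is a sum of copies of the regular representation of $Z$. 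Viewed on $\mathcal{X}_{S'}\cong BG_{S'}$, the piece $R_\chi$ fails to descend to $\mathcal{X}$ by exactly the $\mathbf{G}_m$-valued class $\chi_*\alpha$, which is torsion because $Z$ is finite.

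Now Corollary~\ref{corollary-finiteetalecommutative} enters: it produces a faithful vector bundle on the commutative gerbe $\mathcal{Z}$, and after decomposing by character and closing up under tensor products and duals --- the occurring characters have trivial common kernel, hence generate $\widehat{Z}$ --- one obtains, for every $\chi\in\widehat{Z}$, a nonzero twisted bundle $\mathcal{W}_\chi$ on $\mathcal{Z}$ carrying precisely the compensating twist $(\chi_*\alpha)^{-1}$. The tensor product $R_\chi\otimes\mathcal{W}_\chi$ then has untwisted descent datum and glues to a genuine vector bundle on $\mathcal{X}$ on which the central inertia still acts through $\chi$, and I would set $\mathcal{V}:=\bigoplus_{\chi}R_\chi\otimes\mathcal{W}_\chi$. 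Faithfulness of $\mathcal{V}$ is checked after restriction to $S'$, where it contains the regular representation of $G$ and is therefore a faithful $G$-representation; the construction is then descended back along $S'\to S$.

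The step I expect to be the main obstacle is this twisting-and-descent bookkeeping: identifying the failure of each $R_\chi$ to descend with the torsion class $\chi_*\alpha$, matching it against the twisted bundles extracted from the commutative gerbe $\mathcal{Z}$ so that the tensor product descends while the full band $G$ --- and not merely $G/Z$ --- continues to act faithfully on the fibers, and carrying all of this out intrinsically for a possibly non-constant finite \'etale $G$ so as to remove the auxiliary cover $S'$ at the end. By contrast the passage to faithfulness and the final implications (quotient stack and resolution property) are purely formal, via Lemma~\ref{lemma-faithfulbundlemeaning} and Theorem~\ref{thm-finiteflatcover}.
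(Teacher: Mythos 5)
Your overall strategy (reduce to the center, then invoke Corollary~\ref{corollary-finiteetalecommutative}) is the right one, and your first reduction and final formalities (Lemma~\ref{lemma-faithfulbundlemeaning}, Theorem~\ref{thm-finiteflatcover}, pushing forward along finite locally free surjections via Lemma~\ref{lemma-resforcover}) are fine. But the middle of your argument has a fatal gap: the central-character isotypic decomposition $R=\bigoplus_{\chi\in\widehat{Z}}R_\chi$ simply does not exist when the order of $Z$ is not invertible on $S$, and that is precisely the case this paper is built to handle. For example, if $Z=(\mathbf{Z}/p)_S$ with $S$ a reduced $\mathbf{F}_p$-scheme, then $\operatorname{Hom}(Z,\mathbf{G}_m)=\mu_p(S)$ is trivial, the regular representation of $Z$ is an indecomposable unipotent representation (a single Jordan block fiberwise), and the only character occurring as a subquotient is the trivial one --- so ``every $\chi\in\widehat{Z}$ occurs'' and ``the occurring characters have trivial common kernel, hence generate $\widehat{Z}$'' both fail, and no amount of compensating by classes $\chi_*\alpha$ can make the construction faithful on the center. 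The same problem infects your use of $\mathcal{Z}$: the faithful bundle on the commutative gerbe produced by Corollary~\ref{corollary-finiteetalecommutative} does not split into $\chi$-eigenbundles either; indeed that corollary is itself proved via Theorem~\ref{thm-faithfulbundleexists}, whose mechanism is a \emph{filtration} with line-bundle graded pieces (trigonalizability) glued by the extension-approximation Lemma~\ref{lemma-extension}, not an eigenspace decomposition. In effect your proposal re-proves only the tame/diagonalizable case already covered by Gabber's theorem, whereas the statement includes $\mathbf{Z}/n$-gerbes with $n$ not invertible --- the motivating case from the introduction. A secondary issue: the band of a separated \'etale gerbe is not in general a group scheme (it is a lien, an outer form of a constant group), so your opening assertion ``its band is a finite \'etale group scheme $G$'' needs the paper's preliminary reduction: split $S$ so all geometric fibers are $BG_{\bar s}$ for a fixed abstract $G$, then pass to the finite \'etale $\operatorname{Out}(G)$-torsor $S'\to S$ (\cite[Proposition 3.5]{ehkv}) to get an honest banding by $G$, which is harmless by Lemma~\ref{lemma-resforcover}.

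The paper's actual passage to the center avoids all of your twisting bookkeeping. Instead of comparing $\mathcal{X}$ with $BG_S$ cohomologically, one forms the stack $\mathcal{Y}$ of equivalences $BG_S\to\mathcal{X}$ over $S$ inducing the identity on bands (\cite[Section 1.2.5]{bandedbycenter} or \cite[IV.3.3]{Giraud}): this $\mathcal{Y}$ is a gerbe over $S$ banded by the \emph{commutative} group $Z(G)$, and it comes with a finite \'etale surjection $\mathcal{Y}\to\mathcal{X}$. One then applies Corollary~\ref{corollary-finiteetalecommutative} directly to $\mathcal{Y}$ and pushes the resulting faithful bundle forward along $\mathcal{Y}\to\mathcal{X}$ using Lemma~\ref{lemma-resforcover}(1); no character decomposition, no descent of bundles against $\alpha$, and no removal of auxiliary covers is ever needed, since every reduction goes \emph{up} a finite locally free surjection and the conclusion is transported \emph{down} by pushforward. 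If you want to salvage your write-up, replace the entire $R_\chi\otimes\mathcal{W}_\chi$ construction by this geometric replacement of $\mathcal{X}$ by $\mathcal{Y}$.
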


\begin{proof}
    We may assume that there is a finite abstract group $G$ such that for every $s \in S$, the geometric fiber $\mathcal{X}_{\bar{s}}$ is isomorphic to $BG_{\bar{s}}$. In general, $S$ is a finite disoint union of open and closed subschemes $S_i$ for which this is true and we may prove the result for each $S_i$ individually. 

    In this case, $\mathcal{X} \to S$ is a $G$-gerbe, that is, there is an fppf cover $U \to S$ such that $\mathcal{X}_U \cong BG_{U}$ as stacks over $U$. By \cite[Proposition 3.5]{ehkv}, there is a finite \'etale covering $S' \to S$ such that $\mathcal{X}_{S'} \to S'$ is a gerbe banded by $G$. For $S' \to S$ we may take the $\operatorname{Out}(G)$-torsor which is the isomorphism sheaf between the band of $\mathcal{X}$ and the band associated to $G$ in the stack of bands on $(Sch/S)_{fppf}$, see \cite[IV.1.1]{Giraud}. It suffices to prove the result for $\mathcal{X}_{S'}$ by Lemma \ref{lemma-resforcover}. Thus we may assume $\mathcal{X} \to S$ is banded by $G$. Then by \cite[Section 1.2.5]{bandedbycenter} or \cite[IV.3.3]{Giraud}, there is a finite \'etale surjective morphism $\mathcal{Y} \to \mathcal{X}$ such that the composition $\mathcal{Y} \to S$ is a gerbe banded by the center of $G$. Namely, $\mathcal{Y}$ is the stack of equivalences from $BG_{S}$ to $\mathcal{X}$ over $S$ which induce the identity on bands. Again, Lemma \ref{lemma-resforcover} shows we may replace $\mathcal{X}$ with $\mathcal{Y}$ and therefore assume $G$ is commutative. Now the result follows from Corollary \ref{corollary-finiteetalecommutative}. 
\end{proof}

\begin{proof}[Proof of Theorem \ref{thm-smoothDM}]
The assumptions imply that $\mathcal{X}$ is normal, so by \cite[Proposition 2.1]{HomStacks}, the coarse space morphism $\mathcal{X} \to X$ factors as
$\mathcal{X} \to \mathcal{X}' \to X$ where $\mathcal{X} \to \mathcal{X}'$ is a separated \'etale gerbe and $\mathcal{X}' \to X$, which is the coarse space morphism of $\mathcal{X}'$, is an isomorphism over a dense open of $X$. By \cite[Theorem 2.18]{ehkv}, the stack $\mathcal{X}'$ is a quotient stack, and so by Theorem \ref{thm-finiteflatcover}, there is a finite locally free surjective morphism $Y \to \mathcal{X}'$ where $Y$ is a scheme with an ample line bundle. Then the projection 
$\mathcal{X} \times_{\mathcal{X}'} Y \to Y$ is a separated \'etale gerbe, so by Lemma \ref{lemma-etalegerbe}, the stack $\mathcal{X} \times_{\mathcal{X}'} Y$ admits a faithful vector bundle. Since this stack admits a finite locally free surjective morphism to $\mathcal{X}$ we conclude by Lemma \ref{lemma-resforcover}.
\end{proof}

\bibliographystyle{amsplain}
\bibliography{references}

\providecommand{\bysame}{\leavevmode\hbox to3em{\hrulefill}\thinspace}
\providecommand{\MR}{\relax\ifhmode\unskip\space\fi MR }
% \MRhref is called by the amsart/book/proc definition of \MR.
\providecommand{\MRhref}[2]{%
  \href{http://www.ams.org/mathscinet-getitem?mr=#1}{#2}
}
\providecommand{\href}[2]{#2}
\begin{thebibliography}{10}

\bibitem{SGA6}
\emph{Th\'eorie des intersections et th\'eor\`eme de {R}iemann-{R}och}, Lecture Notes in Mathematics, vol. Vol. 225, Springer-Verlag, Berlin-New York, 1971, S\'eminaire de G\'eom\'etrie Alg\'ebrique du Bois-Marie 1966--1967 (SGA 6), Dirig\'e{} par P. Berthelot, A. Grothendieck et L. Illusie. Avec la collaboration de D. Ferrand, J. P. Jouanolou, O. Jussila, S. Kleiman, M. Raynaud et J. P. Serre. \MR{354655}

\bibitem{bragg2025unipotentmorphisms}
Daniel Bragg, Jack Hall, and Siddharth Mathur, \emph{Unipotent morphisms}, 2025, arXiv:2110.15041, to appear in Geometry and Topology.

\bibitem{Caldararuthesis}
Andrei~Horia C{\u{a}}ld\u{a}raru, \emph{Derived categories of twisted sheaves on {C}alabi-{Y}au manifolds}, ProQuest LLC, Ann Arbor, MI, 2000, Thesis (Ph.D.)--Cornell University. \MR{2700538}

\bibitem{dejong-gabber}
Aise~Johan de~Jong, \emph{A result of {G}abber}, \url{https://www.math.columbia.edu/~dejong/wordpress/?p=4262}, 2003.

\bibitem{bandedbycenter}
Pierre D\`ebes and Jean-Claude Douai, \emph{Gerbes and covers}, Comm. Algebra \textbf{27} (1999), no.~2, 577--594. \MR{1671938}

\bibitem{DemazureGabriel}
Michel Demazure and Pierre Gabriel, \emph{Groupes alg\'ebriques. {T}ome {I}: {G}\'eom\'etrie alg\'ebrique, g\'en\'eralit\'es, groupes commutatifs}, Masson \& Cie, \'Editeurs, Paris; North-Holland Publishing Co., Amsterdam, 1970, Avec un appendice {\it Corps de classes local}\ par Michiel Hazewinkel.

\bibitem{SGA3_1}
Michel Demazure, Alexandre Grothendieck, Jean-Étienne Bertin, Pierre Gabriel, Michèle Raynaud, and Jean-Pierre Serre, \emph{Sch\'emas en groupes ({SGA} 3): Tome 1, propri\'et\'es g\'en\'erales des sch\'emas en groupes}, Lecture Notes in Mathematics, vol. 151, Springer-Verlag, Berlin; New York, 1970. \MR{0274458}

\bibitem{1resolutionprop}
Neeraj Deshmukh, Amit Hogadi, and Siddharth Mathur, \emph{Quasi-affineness and the 1-resolution property}, Int. Math. Res. Not. IMRN (2022), no.~3, 2224--2249. \MR{4373233}

\bibitem{ehkv}
Dan Edidin, Brendan Hassett, Andrew Kresch, and Angelo Vistoli, \emph{Brauer groups and quotient stacks}, Amer. J. Math. \textbf{123} (2001), no.~4, 761--777.

\bibitem{unipotentsubgroups}
Philippe Gille and Robert Guralnick, \emph{Unipotent subgroups of stabilizers}, Transform. Groups \textbf{28} (2023), no.~3, 1149--1164. \MR{4633008}

\bibitem{Giraud}
Jean Giraud, \emph{Cohomologie non ab\'elienne}, C. R. Acad. Sci. Paris \textbf{260} (1965), 2666--2668. \MR{201490}

\bibitem{Gross}
Philipp Gross, \emph{Tensor generators on schemes and stacks}, Algebr. Geom. \textbf{4} (2017), no.~4, 501--522. \MR{3683505}

\bibitem{GroupedeBrauerI}
Alexander Grothendieck, \emph{Le groupe de {Brauer} : {I.} {Alg\`ebres} {d'Azumaya} et interpr\'etations diverses}, S\'eminaire Bourbaki : ann\'ees 1964/65 1965/66, expos\'es 277-312, S\'eminaire Bourbaki, no.~9, Soci\'et\'e math\'ematique de France, 1966, talk:290, pp.~199--219 (fr). \MR{1608798}

\bibitem{GroupedeBrauerII}
\bysame, \emph{Le groupe de {Brauer} : {II.} {Th\'eories} cohomologiques}, S\'eminaire Bourbaki : ann\'ees 1964/65 1965/66, expos\'es 277-312, S\'eminaire Bourbaki, no.~9, Soci\'et\'e math\'ematique de France, 1966, talk:297, pp.~287--307 (fr). \MR{1608805}

\bibitem{kreschvistoli}
Andrew Kresch and Angelo Vistoli, \emph{On coverings of {D}eligne-{M}umford stacks and surjectivity of the {B}rauer map}, Bull. London Math. Soc. \textbf{36} (2004), no.~2, 188--192. \MR{2026412}

\bibitem{MR1771927}
G\'erard Laumon and Laurent Moret-Bailly, \emph{Champs alg\'ebriques}, Ergebnisse der Mathematik und ihrer Grenzgebiete. 3. Folge. A Series of Modern Surveys in Mathematics [Results in Mathematics and Related Areas. 3rd Series. A Series of Modern Surveys in Mathematics], vol.~39, Springer-Verlag, Berlin, 2000. \MR{1771927}

\bibitem{Lieblichthesis}
Max Lieblich, \emph{Moduli of twisted sheaves and generalized {A}zumaya algebras}, ProQuest LLC, Ann Arbor, MI, 2004, Thesis (Ph.D.)--Massachusetts Institute of Technology. \MR{2717173}

\bibitem{MilneAlgGroups}
James~S. Milne, \emph{Algebraic groups}, Cambridge Studies in Advanced Mathematics, vol. 170, Cambridge University Press, Cambridge, 2017, The theory of group schemes of finite type over a field. \MR{3729270}

\bibitem{HomStacks}
Martin Olsson, \emph{A boundedness theorem for {H}om-stacks}, Math. Res. Lett. \textbf{14} (2007), no.~6, 1009--1021. \MR{2357471}

\bibitem{SGA3_II}
Michel Raynaud, Michael Artin, Alexander Grothendieck, and Jean-Pierre Serre, \emph{Schemas en groupes. seminaire de geometrie algebrique du bois marie 1962/64 ({S}{G}{A} 3)}, Lecture Notes in Mathematics, vol. 152, Springer Berlin, Heidelberg, 1970. \MR{0274459}

\bibitem{MR3084720}
David Rydh, \emph{Existence and properties of geometric quotients}, J. Algebraic Geom. \textbf{22} (2013), no.~4, 629--669. \MR{3084720}

\bibitem{schäppi2012characterizationcategoriescoherentsheaves}
Daniel Schäppi, \emph{A characterization of categories of coherent sheaves of certain algebraic stacks}, 2012, arXiv:1206.2764.

\bibitem{stacks-project}
The {Stacks project authors}, \emph{The stacks project}, \url{https://stacks.math.columbia.edu}, 2025.

\bibitem{Totaro}
Burt Totaro, \emph{The resolution property for schemes and stacks}, J. Reine Angew. Math. \textbf{577} (2004), 1--22. \MR{2108211}

\end{thebibliography}

\textsc{UC Berkeley Department of Mathematics, Department of Mathematics
970 Evans Hall, MC 3840
Berkeley, CA 94720-3840}

Email: \href{nolander@berkeley.edu}{nolander@berkeley.edu}

\textsc{UC Berkeley Department of Mathematics, Department of Mathematics
970 Evans Hall, MC 3840
Berkeley, CA 94720-3840}

Email: \href{martinolsson@berkeley.edu}{martinolsson@berkeley.edu}

\end{document}